\DeclareMathOperator{\inn}{Inn} \DeclareMathOperator{\perm}{Sym}
 \DeclareMathOperator{\soc}{soc}
\DeclareMathOperator{\aut}{Aut} 
\DeclareMathOperator{\B}{B}
\DeclareMathOperator{\ran}{rank} \DeclareMathOperator{\frat}{Frat}
\DeclareMathOperator{\ssl}{SL}
  \DeclareMathOperator{\diam}{diam}
\DeclareMathOperator{\ra}{rank}
\DeclareMathOperator{\GL}{GL}
\DeclareMathOperator{\End}{End} 
\DeclareMathOperator{\der}{Der}
\newcommand{\remm}{\mathrm{m}}
\newcommand{\lst}{\Lambda\!^*\!(G)}
\newcommand{\lstt}{\Lambda_1\!^*\!(G)}
\newcommand{\lti}{\tilde\Lambda\!^*\!(G)}
\newcommand{\w}{\widetilde}
\newcommand{\C}{\mathbb C}
\newcommand{\rg}{\Gamma}
\newcommand{\N}{\mathbb N}
\renewcommand{\emptyset}{\varnothing}
\newtheorem{thm}{Theorem}[section]
\newtheorem{cor}[thm]{Corollary}
\newtheorem{lemma}[thm]{Lemma}
\newtheorem{prop}[thm]{Proposition} 
 \newtheorem{defn}[thm]{Definition}
\newtheorem{question}[]{Question} \newtheorem{con}[]{Conjecture}
\numberwithin{equation}{section}
\renewcommand{\footnote}{\endnote}
\newcommand{\ignore}[1]{}\makeglossary
\begin{document}
	\bibliographystyle{amsplain}
	\subjclass{ 20D10, 20D60, 05C25}
\keywords{finite group, generation, generating graph, swap conjecture, probabilistic zeta function}
	\title[Graphs and generation]{Graphs encoding the generating properties \\of a finite group}
	
\author[Cristina Acciarri]{Cristina Acciarri}
\address{Cristina Acciarri\\ Department of Mathematics, University of Brasilia, 70910-900 Bras\'ilia DF, Brazil\\ 
email:acciarricristina@yahoo.it}
\author[Andrea Lucchini]{Andrea Lucchini}
\address{Andrea Lucchini\\ Universit\`a degli Studi di Padova\\  Dipartimento di Matematica \lq\lq Tullio Levi-Civita\rq\rq\\ Via Trieste 63, 35121 Padova, Italy\\email: lucchini@math.unipd.it}

\thanks{Research partially supported by MIUR-Italy via PRIN \lq Group theory and applications\rq. The first author is also supported by CAPES-Brazil (bolsista da Capes/Pesquisa P\'os-doutoral no Exterior/Processo n. 88881.119002/2016-01) and thanks the Universit\`a degli Studi di Padova for support and hospitality that she enjoyed during her visit to Padua.}

	\begin{abstract} 
Assume that $G$ is a finite group. For every $a, b \in\mathbb N,$ we define a  graph $\Gamma_{a,b}(G)$ whose vertices correspond to the elements of $G^a\cup G^b$ and in which two tuples $(x_1,\dots,x_a)$ and $(y_1,\dots,y_b)$ are adjacent if and only if $\langle x_1,\dots,x_a,y_1,\dots,y_b \rangle =G.$  We study several properties of these graphs (isolated vertices, loops, connectivity, diameter of the connected components) and we investigate the relations between their properties and the group structure, with the aim of understanding which  information about $G$ is encoded by these graphs.

			\end{abstract}
	\maketitle

\section{Introduction}

The generating  graph $\Gamma(G)$ of a finite group $G$ is the graph defined on the elements of $G$ in such a way that two distinct vertices are connected by an edge if and only if they generate $G.$  It was defined by Liebeck and
Shalev in \cite{LS}, and has been further investigated by
many authors: see for example \cite{ bglmn,  beghm, bucr, ccl, CLis, GK, LM2, lm, LMRD, atsim} for some
of the range of questions that have been considered. Many deep
structural results about finite groups can be expressed in terms of
the generating graph, but of course  $\Gamma(G)$ encodes significant information only when $G$ is a 2-generator group. The aim of this paper is to introduce and investigate a wider family of graphs which encode the generating property of $G$ when $G$ is an arbitrary finite group.

\

We introduce the following definition. Assume that $G$ is a finite group and let $a$ and $b$ be non-negative integers. We define an undirected  graph $\Gamma_{a,b}(G)$ whose vertices correspond to the elements of $G^a\cup G^b$ and in which two tuples $(x_1,\dots,x_a)$ and $(y_1,\dots,y_b)$ are adjacent if and only $\langle x_1,\dots,x_a,y_1,\dots,y_b \rangle =G.$  
 Notice that $\Gamma_{1,1}(G)$ is the generating graph of $G,$ 	so these graphs can be viewed as a natural generalization of the generating graph. 

\

There may be many isolated vertices in the generating graph $\Gamma(G)$ of a finite group $G$ (for example if $N$ is a normal subgroup of $G$ and $G/N$ is not cyclic, then all the elements of $N$ correspond to isolated vertices). However,  \cite{CLis}  considers the subgraph $\Gamma^*(G)$
of $\Gamma(G)$ that is induced by all of the vertices that are not isolated and it is  proved that if $G$ is a 2-generator soluble group, then $\Gamma^*(G)$ is connected. This result is  equivalent to saying the \lq\lq swap conjecture\rq\rq \ is satisfied by the 2-generator finite soluble groups. Recall that the swap conjecture concerns the connectivity of the graph $\Sigma_d(G)$ in which the vertices are the ordered generating $d$-tuples and  two vertices $(x_1,\dots,x_d)$ and $(y_1,\dots,y_d)$ are adjacent if and only if they differ only by one entry.
Tennant and Turner \cite{TT} conjectured that the swap graph is connected for every group. Roman'kov \cite{rom} proved that the free metabelian group of rank 3 does not satisfy this conjecture but no counterexample is known
in the class of finite groups. There is a strong relation between the properties  of the swap graph  $\Sigma_{a+b}(G)$ and those of the graph $\Gamma_{a,b}^*(G),$ obtained from $\Gamma_{a,b}(G)$ by deleting the isolated vertices.  In particular we prove that if $\Sigma_{a+b}(G)$ is connected, then  $\Gamma_{a,b}^*(G)$  is also connected (see Lemma \ref{redsw}). Recently \cite{swap, disum} it has been proved that $\Sigma_d(G)$ is connected if either $d>d(G)$ or $d=d(G)$ and $G$ is soluble (where $d(G)$ is  the minimum number of generators of $G$). This can be used to prove the connectivity of  $\Gamma_{a,b}^*(G)$ in many cases: {\sl{the graphs $\Gamma_{a,b}^*(G)$ are connected, except possibly when $a+b=d(G)$ and $G$ is not soluble}} (see Corollary \ref{c15}).

\

Once is known that the graphs $\Gamma_{a,b}^*(G)$ are connected in most cases, the next step is to investigate their diameters. When $G$ is  soluble and 2-generated,  it has been recently proved \cite{diam} that the graph $\Gamma^*(G)$ has diameter at most 3: this bound is best possible, but it can be improved to 2 if $G$ satisfies the following additional property: $|\End_G(V)|>2$ for every non-trivial irreducible $G$-module $V$ which is $G$-isomorphic to a complemented chief factor of $G$ (which is true for example if the derived subgroup of $G$ is nilpotent or has odd order). In this paper we prove a more general result (see Theorem \ref{abd}): {\sl{assume that $G$ is a finite soluble group and that $(x_1,\dots,x_b)$ and $(y_1,\dots,y_b)$ are non-isolated vertices of $\Gamma_{a,b}(G)$: if either $a\neq 1$ or  $|\End_G(V)|>2$ for every non-trivial irreducible $G$-module $V$ which is $G$-isomorphic to a complemented chief factor of $G$, then there exists $(z_1,\dots,z_a)\in G^a$ such that $G=\langle z_1,\dots,z_a,x_1,\dots,x_b \rangle =
\langle z_1,\dots,z_a, y_1,\dots,y_b  \rangle.$}} We will give an example showing that when $a=1$ the previous statement does not remain true if we drop  the assumption on the order of the endomorphism group of the complemented chief factors. But in any case the previous result allows us to conclude that  {\sl{$\diam(\Gamma^*_{a,b}(G))\leq 4$ whenever $G$ is soluble and $a+b\geq d(G)$}} (see Corollary \ref{diametro}). These results lead also to a better understanding of the swap graph. For example we deduce that {\slshape{if $G$ is soluble and $|\End_G(V)|>2$ for every non-trivial irreducible $G$-module  $V$  which is $G$-isomorphic to a complemented chief factor of $G$, then the diameter of the swap graph $\Sigma_d(G)$ is at most $2d-1$}} (see Theorem \ref{swapdiam}).

\

The bound $\diam(\Gamma^*_{a,b}(G))\leq 4$ that we prove for finite soluble groups cannot be generalized to an arbitary finite group. Assume that $S$ is a finite non-abelian simple group and, for $d\geq 2,$ let $\tau_d(S)$  be the largest positive integer $r$ such that $S^{r}$ can be
generated by $d$ elements. In Section \ref{powsim} we will prove that if $a$ and $b$ are positive integers, then $\Gamma^*_{a,b}(S^{\tau_{a+b}(S)})$ is connected, however
$$\lim_{p\to \infty} \diam(\Gamma^*_{a,b}(\ssl(2,2^p)^{\tau_{a+b}(\ssl(2,2^p))}))=\infty.$$

\

In Section \ref{recog} we investigate how one can deduce information on $G$ from the knowledge of the graphs $\Gamma^*_{a,b}(G)$ for all the possible choices of $a$ and $b.$  More precisely we will denote by $\Lambda^*(G)$ the collection of all the connected components  of the graphs $\Gamma^*_{a,b}(G)$, for all the possible choices of $a, b$ in $\mathbb N$. However for each of the graphs in this family, we do not assume to know from which choice of $a, b$ it arises. Roughly speaking, we can think that we  packaged all the graphs $\Gamma^*_{a,b}(G)$ in a (quite spacious) box but that we did not pay enough attention during this operation and we lost  the information to which group $G$ these graphs correspond and the labels $a,b$: do not panic, a big amount  of the lost information can be reconstructed! We prove that {\slshape{from the knowledge of $\Lambda^*(G)$ we may
recover $d(G)$, $|G|$ and the labels  $a,b$, at least when $a+b>d(G)$}} (see Propositions \ref{ord}, \ref{dord} and \ref{detab}).
 Moreover considerations on the number of edges of the graphs in $\lst$ allows us to determine, for every $t\in \mathbb N,$ the number $\phi_G(t)$ of the ordered generating $t$-tuples of $G.$ Philip Hall \cite{hall} observed that the probability $\phi_G(t)/|G|^t$ of generating a given finite group $G$ by a random $t$-tuple of elements is given by
$$P_{G}(t)=\sum _{n \in
	\N}\frac{a_n(G)}{n^t}$$
where
$a_n(G)=\sum_{|G:H|=n}\mu _G(H)$ and $\mu$ is the M\"{o}bius function on the subgroup lattice of $G$.
In other words, for a given finite group $G,$ there exists a uniquely determined Dirichlet polynomial $P_G(s)$ (where $s$ is a complex variable) with the property that for $t\in\mathbb N$ the number $P_G(t)$ coincides with the probability of generating $G$ by $t$ randomly chosen elements.
The reciprocal of $P_G(s)$ is the \lq\lq probabilistic zeta function\rq\rq  \ of $G,$ studied by N. Boston \cite{boston}, A. Mann \cite{PFG} and the second author \cite{arac}. We prove that $P_G(s)$ can be determined from $\lst$ (see Theorem \ref{pzg}) and consequently we may also recover from $\lst$ all the information that can be determined from $P_G(s),$ taking advantages from a series of available results in the literature, about the relation between the arithmetic properties of the Dirichlet series $P_G(s)$ and the structure of $G$. In particular we  may deduce whether $G$ is soluble or supersoluble  and, for every prime power $n,$ determine the number of maximal subgroups of $G$ of index $n$. But we also prove that {\slshape{from $\lst$ we may deduce whether $G$ is nilpotent and the order of the Frattini subgroup}} (information that cannot be recovered from $P_G(s)).$ A possible development of this investigation could be to minimize the number of graphs in $\Lambda^*(G)$ that have to be considered in order to obtain information about $G$. From this point of view, we notice  that all the above mentioned properties of $G$ could be deduced taking into account only the graphs  of the form $\Gamma^*_{1,b}(G)$ for $b\in \mathbb N.$

\

The graphs $\Gamma_{1,b}(G)$ play also a central role in the last section of the paper. In \cite{ccl} an equivalence relation
$\equiv_{\remm}$ has been introduced, where two elements
are equivalent if each can be substituted for the other in any
generating set for $G$. This relation can be refined to 
a new sequence $\equiv_\remm^{(r)}$ of equivalence relations by saying that 
$x \equiv_\remm^{(r)}y$  if each can be substituted for the other in any
$r$-element generating set.
The relations $\equiv_\remm^{(r)}$ become finer as $r$ increases, and in \cite{ccl} the authors study
the value $\psi(G)$ of $r$ at which they stabilise to $\equiv_{\remm}$. Indeed results about $\equiv_{\remm}$, $\equiv_\remm^{(r)}$ and $\psi(G)$ can be reformulated and reinterpreted in terms of properties of the graphs  $\Gamma_{1,b}(G).$
 A significant role in this investigation is played by the groups $G$ with the property that $(g)$ is not isolated in the graph $\Gamma_{1,d(G)-1}(G)$ for every $g\neq 1$ (generalising a terminology  used for 2-generator groups, we say that $G$ has non-zero spread if it satisfies such property). 
In \cite{bgk}, Breuer, Guralnick and Kantor make the following remarkable conjecture: a 2-generated finite group has non-zero spread if and only if every proper quotient is cyclic. This conjecture has been recently proved by Burness, Guralnick and Scott \cite{bgh}. In the final part of the  paper we generalize this result, proving that  a finite group $G$ has non-zero spread if and only if  $d(G/N)<d(G)$ for every non-trivial normal subgroup $N$ of $G.$ (see Proposition \ref{ultimo}).

\section{The graphs $\Gamma_{a,b}(G)$ and $\Gamma^*_{a,b}(G)$.}

In this section we give the definition of the graphs $\Gamma_{a,b}(G)$ and $\Gamma^*_{a,b}(G)$ associated to a finite group $G$ and a pair $(a,b)$ of non-negative integers. Firstly we explore some properties of these graphs that follow easily from their definitions and then we investigate their connection with the so called \lq swap graph'. In particular we use this connection in order to deduce results about the connectivity of $\Gamma_{a,b}(G)$ and $\Gamma^*_{a,b}(G).$

\

Let $G$ be a finite group. We will denote by $d(G)$ the smallest cardinality of a generating set of $G.$ Moreover, 
given $d\in \mathbb N$, we will denote by $\Phi_G(d)$ the set of the ordered generating $d$-tuples of $G$ and by $\phi_G(d)$ the cardinality of this set.

\begin{defn}
Assume that $G$ is a finite group and let $a$ and $b$ be non-negative integers with $a\leq b$. We define an undirected  graph $\Gamma_{a,b}(G)$ whose vertices correspond to the elements of $G^a\cup G^b$ and in which two tuples $(x_1,\dots,x_a)$ and $(y_1,\dots,y_b)$ are adjacent if and only $\langle x_1,\dots,x_a,y_1,\dots,y_b \rangle =G.$ 
\end{defn}
 Clearly if $a+b<d(G),$ then $\Gamma_{a,b}(G)$ is an empty graph, so in general we will implicitly assume $a+b\geq d(G).$

\begin{defn}
$\Gamma_{a,b}^*(G)$ is the  graph obtained from $\Gamma_{a,b}(G)$ by deleting the isolated vertices.
\end{defn}

In the particular case when $a=0,$ the graph $\Gamma_{0,b}^*(G)$ is a star with one internal node, corresponding  to the 0-tuple, and $\phi_G(b)$ leaves, corresponding to the ordered generating $b$-tuples of $G.$ Notice that if $a\geq d(G)$, then $\Gamma_{a,a}(G)$ contains loops: if $G=\langle g_1,\dots,g_a\rangle$ then we have a loop around the vertex $(g_1,\dots,g_a).$

\

Let $d=a+b.$ If $a\neq b$ then $\Gamma_{a,b}(G)$ and $\Gamma^*_{a,b}(G)$ are bipartite graphs with two parts, one corresponding to the elements of $G^a$ and the other to the elements of $G^b$. We will use the notations $V_a$ and $V_b$ for the vertices of $\Gamma^*_{a,b}(G)$ corresponding, respectively, to elements of $G^a$ and $G^b.$ In particular $\Gamma_{a,b}(G)$ has $|G|^a+|G|^b$ vertices and there exists a bijective correspondence between  $\Phi_G(d)$ and the set of the edges of $\Gamma_{a,b}(G)$: indeed if $\langle g_1,\dots,g_d\rangle=G,$ then $(g_1,\dots,g_a)$ and $(g_{a+1},\dots,g_{d})$ are adjacent  vertices of the graph. Hence the number of edges of $\Gamma_{a,b}(G)$ (which coincides with the number of edges of $\Gamma^*_{a,b}(G)$) is  $\phi_G(d).$ The situation is different if $a=b.$ In that case $\Gamma_{a,a}(G)$ has $|G|^a$ vertices,
$\phi_G(a)$ loops and other $(\phi_G(d)-\phi_G(a))/2$ edges connecting two different vertices (in other words if $e$ is the the number of edges, excluding the loops, and $l$ is the number of loops, then $2e+l=\phi_G(d)$);
 indeed the two elements $(g_1,\dots,g_a,g_{a+1},\dots,g_d)$ and 
$(g_{a+1},\dots,g_d,g_1,\dots,g_a)$ give rise to the same edge in $\Gamma_{a,a}(G)$.

\begin{lemma}\label{tri} Let  $G$ be any  non-trivial finite group and let $a$ be any positive integer.  
	Then  any edge, which is not a loop, of the  graph $\Gamma_{a,
		a}^*(G)$ lies in  a 3-cycle, except when $a=1$ and $G\cong C_2.$
\end{lemma}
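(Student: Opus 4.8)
The plan is to exhibit, for a non-loop edge, a third vertex adjacent to both endpoints. Write the edge as joining $x=(x_1,\dots,x_a)$ and $y=(y_1,\dots,y_a)$; since the edge is not a loop we have $x\neq y$, and adjacency means $\langle x_1,\dots,x_a,y_1,\dots,y_a\rangle=G$. The natural candidate for the third vertex is the componentwise product $z=(x_1y_1,\dots,x_ay_a)$.

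First I would check that $z$ is adjacent to both $x$ and $y$. From $x_i$ and $x_iy_i$ one recovers $y_i=x_i^{-1}(x_iy_i)$, so $\langle x_1,\dots,x_a,z_1,\dots,z_a\rangle\supseteq\langle x_1,\dots,x_a,y_1,\dots,y_a\rangle=G$; symmetrically, from $y_i$ and $x_iy_i$ one recovers $x_i=(x_iy_i)y_i^{-1}$, whence $\langle y_1,\dots,y_a,z_1,\dots,z_a\rangle=G$. In particular $z$ is non-isolated, hence a genuine vertex of $\Gamma^*_{a,a}(G)$, and it is joined to each of $x$ and $y$. It remains to ensure $z\notin\{x,y\}$. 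Here $z=x$ holds precisely when $y=(1,\dots,1)$, and $z=y$ precisely when $x=(1,\dots,1)$; so whenever neither $x$ nor $y$ is the identity tuple, the vertices $x,y,z$ are distinct and form the desired $3$-cycle.

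This reduces the proof to the degenerate case in which one endpoint is the identity tuple, say $y=(1,\dots,1)$ (the other case is symmetric). Then adjacency forces $\langle x_1,\dots,x_a\rangle=G$, and any generating $a$-tuple $z$ is automatically adjacent to $x$ and, since $\langle y_1,\dots,y_a,z_1,\dots,z_a\rangle=\langle z_1,\dots,z_a\rangle$, also to $y$; thus it suffices to produce a generating $a$-tuple different from $x$, i.e. to show $\phi_G(a)\geq 2$. For $a\geq 2$ this always holds: if two coordinates of $x$ differ one may swap them, and otherwise $x=(g,\dots,g)$ with $\langle g\rangle=G$ cyclic, in which case replacing one entry by $g^{-1}$, or by the identity if $g^2=1$, produces a second generating tuple (here $a\geq 2$ guarantees another coordinate still equals $g$). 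For $a=1$ the group $G=\langle x\rangle$ is cyclic and $\phi_G(1)\geq 2$ exactly when $|G|>2$; the only surviving case is $|G|=2$, giving the stated exception $a=1$, $G\cong C_2$.

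The main obstacle is precisely this degenerate identity-tuple case: the product construction collapses there, and one must instead argue by counting generating $a$-tuples and isolate the unique configuration — a single generator of a two-element group, with no room to perturb it — in which no second generating tuple exists.
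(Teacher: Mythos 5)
Your proposal is correct and takes essentially the same route as the paper: the same third vertex $z=(x_1y_1,\dots,x_ay_a)$ when neither endpoint is the identity tuple, and the same reduction, in the degenerate case, to producing a second generating $a$-tuple, which fails only for $(a,G)=(1,C_2)$. Your explicit verification that such a second tuple exists is merely a spelled-out version of the paper's one-line assertion.
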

\begin{proof} Take  any edge in  $\Gamma_{a,a}^*(G)$, which is not a loop, and let us call $x=(x_1,\ldots,x_a)$ and $y=(y_1,\ldots,y_a)$ its vertices.  If $x$ and $y$ are different from the tuple $(1,\ldots,1)$, then both vertices are adjacent to a third vertex  $z=(x_1y_1,\ldots,x_ay_a)$ and we are done. Next assume that one vertex, let us say $y$, has all trivial entries. This implies  that  $x$ is a generating $a$-tuple for $G$, so the vertex $x$ is adjacent to all other  vertices of  $\Gamma_{a,a}^*(G)$.  If $(a,G)\neq (1,C_2)$, then there exists a generating $a$-tuple for $G$ different from $x$, and this  is adjacent  to both $x$ and $y$. This concludes the proof.
\end{proof}

From the previous lemma it follows that no  connected component of $\Gamma_{a,a}^*(G)$ is  bipartite since a graph is bipartite if and only if it contains no odd cycles. Observe that if $G=1$, then, for every $a\in \mathbb N,$  the graph $\Gamma_{a,a}^*(G)$ consists of a unique vertex with a loop, so it is not bipartite either. In the case where $G$ is isomorphic to $C_2$ and $a=1$, the graph  $\Gamma_{1,1}^*(G)$ is again not bipartite since we have a loop on the vertex corresponding to the unique generator of $G$. 

\begin{lemma}\label{degone}If $|G|\geq 3$, then $\Gamma^*_{a,b}(G)$ contains a vertex $x$ of degree 1 if and only if $a=0,$ $b\geq d(G)$ and $x$ is one of
the $\phi_G(b)$ leaves of the star $\Gamma_{0,b}^*(G)\cong K_{1,\phi_G(b)  }$.	
\end{lemma}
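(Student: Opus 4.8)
The plan is to treat both directions at once by deciding, for each pair $(a,b)$, exactly which non-isolated vertices of $\Gamma^*_{a,b}(G)$ have degree $1$. The case $a=0$ is essentially settled by the remark preceding the lemma: there $\Gamma^*_{0,b}(G)$ is empty when $b<d(G)$ and is the star $K_{1,\phi_G(b)}$ when $b\geq d(G)$, with centre the $0$-tuple and the $\phi_G(b)$ generating $b$-tuples as leaves. So in this case the leaves do have degree $1$, and to match the statement I only need to rule out that the centre also has degree $1$; that is, I must show $\phi_G(b)\geq 2$. The remaining, and main, task is to prove that \emph{no} vertex of degree $1$ occurs once $a\geq 1$.

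First I would record the auxiliary fact that if $|G|\geq 3$ and $c\geq d(G)$, then $\phi_G(c)\geq 2$. Indeed, if there were a unique generating $c$-tuple $(w_1,\dots,w_c)$, then for every $\alpha\in\aut(G)$ the tuple $(\alpha(w_1),\dots,\alpha(w_c))$ would again be a generating $c$-tuple, hence equal to $(w_1,\dots,w_c)$; as the $w_i$ generate $G$ this forces $\alpha=1$. Thus $\aut(G)=1$, which for a finite group forces $G\cong C_1$ or $G\cong C_2$, contradicting $|G|\geq 3$. This is the precise point where the hypothesis $|G|\geq 3$ enters, and it both finishes the $a=0$ direction and supplies the degenerate subcases below.

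For $a\geq 1$ the key device is a substitution trick. Let $x$ be a non-isolated vertex; I must produce two distinct neighbours. Suppose first $x=(x_1,\dots,x_a)\in V_a$ and put $H=\langle x_1,\dots,x_a\rangle$; its neighbours are the $b$-tuples $y$ with $\langle H,y_1,\dots,y_b\rangle=G$. If $H\neq 1$, take any such neighbour $y$ and any $h\in H\setminus\{1\}$: since $h\in H$ one has $\langle H,hy_1,y_2,\dots,y_b\rangle=\langle H,y_1,\dots,y_b\rangle=G$, so $(hy_1,y_2,\dots,y_b)$ is a second neighbour, distinct from $y$ because $hy_1\neq y_1$. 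If instead $H=1$, then $x=(1,\dots,1)$ and its neighbours are precisely the generating $b$-tuples; being non-isolated forces $b\geq d(G)$, whence $\phi_G(b)\geq 2$ by the auxiliary fact. Either way $x$ has at least two neighbours. The argument for $x\in V_b$ is identical with $a$ and $b$ interchanged (here one uses $a\geq 1$ to have a coordinate to modify), and when $a=b$ the same substitution works inside $G^a$: if $H=\langle x\rangle=G$ then $x$ is adjacent to every vertex and so has large degree, while if $1\neq H\neq G$ the two tuples produced are both different from $x$ and hence give genuine non-loop edges. (Alternatively, for $a=b$ one may invoke Lemma~\ref{tri}: a non-isolated vertex either carries a loop or lies on a non-loop edge contained in a $3$-cycle, so in both cases has degree at least $2$.)

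Putting these together, a vertex of degree $1$ can exist only when $a=0$; then $b\geq d(G)$ and, since the centre has degree $\phi_G(b)\geq 2$, the degree-$1$ vertices are exactly the $\phi_G(b)$ leaves of $\Gamma^*_{0,b}(G)\cong K_{1,\phi_G(b)}$. The only delicate point is the bookkeeping in the case $a=b$, where the presence of loops means one must verify that the two neighbours produced really contribute two distinct edge-ends; everything else is routine once the substitution trick and the bound $\phi_G(c)\geq 2$ are in hand.
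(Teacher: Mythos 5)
Your proof is correct and rests on the same substitution trick as the paper's: replacing one coordinate of a neighbour $y$ by $hy_1$ with $1\neq h\in\langle x_1,\dots,x_r\rangle$ produces a second, distinct neighbour. The only minor divergence is in the all-identity case, where the paper permutes the entries of the unique neighbour to force $G$ to be cyclic with a unique generator, while you invoke the free action of $\aut(G)$ on generating tuples; both yield $|G|\leq 2$, and your extra bookkeeping for loops when $a=b$ and for the centre of the star is harmless.
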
 

\begin{proof}
Assume that $x$ is a vertex of degree 1 in $\Gamma_{a,b}^*(G)$ and that $a>0.$ We may assume  $x=(x_1,\ldots,x_r)$ with $r\in \{a,b\}.$ Let $s=a+b-r.$ Then there exists $(y_1,\ldots,y_s)$ such that  $G=\langle x_1,\ldots,x_r, y_1,\ldots,y_s \rangle$. If $x_i\neq 1$ for some $i\in\{1,\dots,r\},$ then   $x$ is also adjacent to the tuple $(x_iy_1,y_2,\ldots,y_s)$, a contradiction. So $x=(1,\dots,1)$ and consequently $y=(y_1,\dots,y_s)$ is a tuple of generators for $G$. For every $\pi\in \perm(s),$ the element $y_\pi=(y_{1\pi},\dots,y_{s\pi})$ is adjacent to $x$. Since $x$ has degree 1, we must have   $y_1=\dots=y_s,$ $G=\langle y_1\rangle$ and $y_1$ is the unique element generating $G$: this implies 
$|G|\leq 2$.
\end{proof}

The M\"{o}bius function $\mu_G$  is the function defined on the lattice of subgroups of  $G$ by  $\sum_{K\geq H}\mu_G(K)=\delta_{H,G},$ where  $\delta_{G,G}=1$ and $\delta_{H,G}=0$ if $H\neq G$. The following is a consequence of \cite[Section 3]{xdir}.

\begin{lemma}  Let $a$ and $b$ be non-negative integers. Let  $x=(x_1,\ldots,x_r)\in G^r$ with $r\in \{a,b\}$ and set $K=\langle x_1,\ldots,x_r\rangle,$ $s=a+b-r$ and let $\delta_{a,b}(x)$ be the degree of $x$ in  $\Gamma_{a,b}(G)$. We have $$\delta_{a,b}(x)=\sum_{K\leq H}\mu_G(H)|H|^s.$$
In particular $|K|^s$	divides the degree $\delta_{a,b}(x)$ of $x$ in $\Gamma_{a,b}(G)$.
\end{lemma}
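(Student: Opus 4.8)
The plan is to read off the degree as a count of \lq\lq generating completions\rq\rq\ and then apply M\"obius inversion on the subgroup lattice. First I would observe that, with $K=\langle x_1,\dots,x_r\rangle$ and $s=a+b-r$, a neighbour of $x$ in $\Gamma_{a,b}(G)$ is precisely an $s$-tuple $y=(y_1,\dots,y_s)\in G^s$ with $\langle K, y_1,\dots,y_s\rangle=G$; hence $\delta_{a,b}(x)$ equals the number
$$f(G)=\bigl|\{(y_1,\dots,y_s)\in G^s : \langle K,y_1,\dots,y_s\rangle=G\}\bigr|$$
(in the case $a=b$ this is the count of tuples adjacent to $x$, the loop at $x$ being counted once when $K=G$).

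For each subgroup $H$ of $G$ put $f(H)=|\{(y_1,\dots,y_s)\in G^s:\langle K,y_1,\dots,y_s\rangle=H\}|$, so $f(H)=0$ unless $K\le H$. The point is that the tuples with $\langle K,y_1,\dots,y_s\rangle\le H$ are exactly those with every $y_i\in H$, which is possible only when $K\le H$ and then numbers $|H|^s$. Sorting these by the subgroup they actually generate over $K$ gives, for every $H$ with $K\le H$,
$$|H|^s=\sum_{K\le J\le H} f(J).$$
I would then invert this relation over the interval $[K,G]$ of the subgroup lattice. Using that $\mu_G(H)$ is by definition the M\"obius function $\mu(H,G)$ of this lattice (characterised by the stated recurrence $\sum_{H\le T}\mu_G(T)=\delta_{H,G}$), M\"obius inversion yields
$$\delta_{a,b}(x)=f(G)=\sum_{K\le H}\mu_G(H)\,|H|^s,$$
the sum ranging over subgroups $H$ with $K\le H\le G$, which is the claimed formula.

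For the final divisibility assertion it suffices to note that $K\le H$ forces $|K|$ to divide $|H|$ by Lagrange, so $|K|^s$ divides $|H|^s$ in every summand and hence divides the whole sum. (One can see this more structurally, without the formula, by letting $K^s$ act on the set of generating completions by coordinatewise right translation $(y_1,\dots,y_s)\mapsto(y_1k_1,\dots,y_sk_s)$; this action is free and preserves $\langle K,y_1,\dots,y_s\rangle$, so the set splits into orbits of size $|K|^s$.)

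The only real care needed is the incidence-algebra bookkeeping, namely matching the paper's $\mu_G$ with the interval M\"obius function $\mu(\cdot,G)$ and checking that restricting the inversion to $[K,G]$ is harmless because $f(J)$ vanishes off this interval. A minor secondary point is the degree convention for the loops that occur when $a=b$ and $K=G$; fixing that the loop contributes as a single neighbour is what makes $\delta_{a,b}(x)=f(G)$ hold uniformly, and it does not affect the M\"obius computation.
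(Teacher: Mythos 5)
Your argument is correct. The paper does not prove this lemma itself but only cites \cite[Section 3]{xdir}; your derivation — counting the $s$-tuples whose join with $K$ is a given overgroup $H$, noting $\sum_{K\le J\le H}f(J)=|H|^s$, and inverting with the Möbius function $\mu_G(\cdot)=\mu(\cdot,G)$ of the interval $[K,G]$ — is exactly the standard computation that the cited reference carries out, and your handling of the loop convention and of the divisibility claim (either via Lagrange termwise or via the free $K^s$-action) is sound.
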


Recall that  for a $d$-generator finite group $G,$ the swap graph $\Sigma_d(G)$ is the graph
in which the vertices are the ordered generating $d$-tuples
and in which two vertices $(x_1,\dots,x_d)$ and $(y_1,\dots,y_d)$ are adjacent if and only if  they differ only by one entry. 

\begin{lemma}\label{redsw}
	If $\Sigma_{a+b}(G)$ is connected, then $\Gamma_{a,b}^*(G)$ is connected.
\end{lemma}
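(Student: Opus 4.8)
The plan is to show that connectivity of the swap graph $\Sigma_{a+b}(G)$ transfers to $\Gamma_{a,b}^*(G)$ by exhibiting, for any two non-isolated vertices of $\Gamma_{a,b}^*(G)$, a path between them that is assembled from a single path in the swap graph. The key observation linking the two graphs is already implicit in the preceding discussion: there is a bijection between the edges of $\Gamma_{a,b}(G)$ and the generating $d$-tuples of $G$, where $d=a+b$. Namely, a generating tuple $(g_1,\dots,g_d)$ corresponds to the edge joining $(g_1,\dots,g_a)\in V_a$ to $(g_{a+1},\dots,g_d)\in V_b$. The vertices of the swap graph are precisely these generating $d$-tuples, i.e. precisely the edges of $\Gamma_{a,b}(G)$; so a walk in $\Sigma_{a+b}(G)$ is a sequence of edges of $\Gamma_{a,b}(G)$, and I want to argue that a swap of one entry moves us to an \emph{incident} edge.

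\emph{First} I would set up the correspondence carefully. Take two non-isolated vertices $u,v$ of $\Gamma_{a,b}^*(G)$. Being non-isolated, each lies on at least one edge; pick a generating $d$-tuple $\mathbf{g}$ whose associated edge is incident to $u$, and a generating $d$-tuple $\mathbf{h}$ whose associated edge is incident to $v$. By hypothesis $\Sigma_{a+b}(G)$ is connected, so there is a path $\mathbf{g}=\mathbf{t}_0,\mathbf{t}_1,\dots,\mathbf{t}_m=\mathbf{h}$ in the swap graph, where consecutive tuples differ in exactly one coordinate. \emph{Next} I would translate this path back into $\Gamma_{a,b}^*(G)$: each $\mathbf{t}_i$ is a generating $d$-tuple and hence names an edge $e_i$ of $\Gamma_{a,b}(G)$, with endpoints the truncations $\mathbf{t}_i|_{[1,a]}\in V_a$ and $\mathbf{t}_i|_{[a+1,d]}\in V_b$. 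The crucial step is to check that $e_i$ and $e_{i+1}$ \emph{share a vertex}. If the swap between $\mathbf{t}_i$ and $\mathbf{t}_{i+1}$ occurs in a coordinate position lying in $\{1,\dots,a\}$, then the two tuples agree on positions $\{a+1,\dots,d\}$, so $e_i$ and $e_{i+1}$ share their $V_b$-endpoint; symmetrically, a swap in a position of $\{a+1,\dots,d\}$ leaves $e_i$ and $e_{i+1}$ sharing their $V_a$-endpoint. Either way the consecutive edges meet, so the sequence of endpoints furnishes a walk in $\Gamma_{a,b}^*(G)$ connecting an endpoint of $e_0$ to an endpoint of $e_m$.

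\emph{Finally} I would close the gap between the chosen endpoints and the prescribed vertices $u,v$. The walk just produced links \emph{some} endpoint of $e_0$ to \emph{some} endpoint of $e_m$, whereas I want $u$ and $v$ specifically. This is handled by the freedom in choosing $\mathbf{g}$ and $\mathbf{h}$: I select $\mathbf{g}$ so that the endpoint of $e_0$ I care about is exactly $u$ (possible since $u$ lies on at least one edge, and that edge \emph{is} a generating tuple), and likewise for $\mathbf{h}$ and $v$. One must note that every edge $e_i$ of $\Gamma_{a,b}(G)$ traversed is automatically an edge of $\Gamma_{a,b}^*(G)$, because its endpoints are non-isolated (they lie on $e_i$), so the walk stays inside the starred graph throughout.

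The main obstacle I anticipate is bookkeeping rather than conceptual difficulty: keeping the indexing of the swap coordinates consistent with the split into the first $a$ and last $b$ coordinates, and verifying that the endpoints genuinely coincide (not merely lie in the same part $V_a$ or $V_b$). A minor subtlety worth flagging is the case $a=b$, where an edge corresponds to an unordered pair of tuples and a single swap could a priori interchange the roles of the two endpoints; since this still produces incident edges, it does not break the argument, but the degenerate case of loops (where $\mathbf{t}_i|_{[1,a]}=\mathbf{t}_i|_{[a+1,d]}$) should be acknowledged as giving a trivial single-vertex edge that causes no trouble.
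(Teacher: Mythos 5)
Your proof is correct and follows essentially the same route as the paper's: both arguments lift a path in $\Sigma_{a+b}(G)$ to a walk in $\Gamma^*_{a,b}(G)$ by observing that a single swap fixes either the first $a$ coordinates or the last $b$, so consecutive generating tuples yield incident edges (the paper packages this as an induction on the length of the swap path, but the content is identical). Your handling of the endpoint bookkeeping and of loops in the case $a=b$ is also consistent with what the paper needs.
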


\begin{proof}Let $d=a+b.$ We write any generating $d$-tuple $\omega$ in the form $\omega=(\alpha,\beta),$ with $\alpha \in G^a$ and $\beta\in G^b.$ Now let $\sigma, \sigma^*$ be two non-isolated vertices of $\Gamma_{a,b}^*(G)$: there exist two generating $d$-tuples $\omega=(\alpha,\beta)$ and $\omega^*=(\alpha^*,\beta^*)$ with $\sigma\in \{\alpha,\beta\}$ and
	$\sigma^*\in \{\alpha^*,\beta^*\}.$ Since $\Sigma_d(G)$ is connected, there exists a path in $\Sigma_d(G)$ joining $\omega$ to $\omega^*.$ In order to complete our proof, it suffices to prove that if $$\omega_1=(\alpha_1,\beta_1),\dots,\omega_u=(\alpha_u,\beta_u)$$
	is a path in $\Sigma_d(G)$, then the vertices $\alpha_1,\beta_1,\alpha_2,\beta_2,\dots, \alpha_u, \beta_u$ belong to the same connected component of  $\Gamma_{a,b}^*(G).$ We prove this claim by induction on $u.$ The sentence is clearly true when $u=1.$ Assume $u\geq 2.$ By induction $\alpha_2,\beta_2,\dots, \alpha_u, \beta_u$ belong to the same connected component of  $\Gamma_{a,b}^*(G);$ so it is enough to show that $\alpha_1,\beta_1,\alpha_2, \beta_2$ belong to the same connected component. Since $(\alpha_1,\beta_1)$ and $(\alpha_2,\beta_2)$
	differ for only one entry, either $\alpha_1=\alpha_2$ or $\beta_1=\beta_2.$ The graph  $\Gamma_{a,b}^*(G)$ contains the path
	$\beta_1,\alpha_1=\alpha_2,\beta_2$ in the first case and the path $\alpha_1,\beta_1=\beta_2,\alpha_2$ in the second case. 
\end{proof}

The swap conjecture  states that $\Sigma_d(G)$ is connected for every finite group $G$ and every $d\geq d(G)$. In \cite{swap} it was proved that this conjecture is true if $d>d(G)$, while in \cite{disum}
it is proved that it is true also when $d=d(G)$ and $G$ is soluble. So we have:

\begin{cor}\label{c15} If $G$ is a finite group and either $a+b > d(G)$ or $a+b=d(G)$ and $G$ is soluble, then
	$\Gamma_{a,b}^*(G)$ is connected.
\end{cor}

It remains an open problem to decide whether $\Gamma^*_{a,b}(G)$ is connected when $a+b=d(G)$ and $G$ is unsoluble. We conjecture that the answer is positive. However we think that proving results in this direction would be quite difficult and would require 
deep information about the generation properties of the finite almost simple groups.

\

We conclude this section, with the following result, that will be used later.

\begin{lemma}\label{norsbg}Let $N$ be a normal subgroup of a finite group $G$ and let $a$ and $b$ be non-negative integers and assume that $a+b\geq d(G).$ If $\Gamma^*_{a,b}(G)$ is connected, then $\Gamma^*_{a,b}(G/N)$ is connected too.
\end{lemma}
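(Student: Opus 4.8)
The plan is to exploit the natural projection $\pi\colon G\to G/N$ together with Gaschütz's lemma. Write $\ol{g}$ for the image of $g\in G$ in $G/N$ and extend the bar notation coordinatewise to tuples. The first, elementary observation is that $\pi$ carries edges to edges: if $(x_1,\dots,x_a)$ and $(y_1,\dots,y_b)$ are adjacent in $\Gamma_{a,b}(G)$, then $\langle x_1,\dots,x_a,y_1,\dots,y_b\rangle=G$, whence $\langle \ol{x}_1,\dots,\ol{x}_a,\ol{y}_1,\dots,\ol{y}_b\rangle=G/N$, so $(\ol{x}_1,\dots,\ol{x}_a)$ and $(\ol{y}_1,\dots,\ol{y}_b)$ are adjacent in $\Gamma_{a,b}(G/N)$. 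In particular the image of a non-isolated vertex of $\Gamma_{a,b}(G)$ is a non-isolated vertex of $\Gamma_{a,b}(G/N)$, so $\pi$ induces a map from the vertex set of $\Gamma^*_{a,b}(G)$ to that of $\Gamma^*_{a,b}(G/N)$ sending adjacent vertices to adjacent vertices.

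The key step, and the only place where the hypothesis $a+b\geq d(G)$ is used, is to show that this induced map hits every vertex of $\Gamma^*_{a,b}(G/N)$. Let $\ol{x}=(\ol{x}_1,\dots,\ol{x}_r)$ with $r\in\{a,b\}$ be any non-isolated vertex of $\Gamma_{a,b}(G/N)$. Non-isolation means there is a tuple $(\ol{y}_1,\dots,\ol{y}_s)$ of the complementary length $s=a+b-r$ with $\langle \ol{x}_1,\dots,\ol{x}_r,\ol{y}_1,\dots,\ol{y}_s\rangle=G/N$. Since $a+b\geq d(G)$, Gaschütz's lemma lets us choose lifts $x_i,y_j\in G$ of $\ol{x}_i,\ol{y}_j$ with $\langle x_1,\dots,x_r,y_1,\dots,y_s\rangle=G$. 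Then $x=(x_1,\dots,x_r)$ is a non-isolated vertex of $\Gamma_{a,b}(G)$ with $\pi(x)=\ol{x}$.

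Finally I would conclude by transporting paths. Given two non-isolated vertices $\ol{x},\ol{x}^*$ of $\Gamma_{a,b}(G/N)$, lift them as above to non-isolated vertices $x,x^*$ of $\Gamma_{a,b}(G)$. As $\Gamma^*_{a,b}(G)$ is connected, there is a path $x=v_0,v_1,\dots,v_m=x^*$ in it; applying $\pi$ yields a sequence $\ol{x}=\pi(v_0),\dots,\pi(v_m)=\ol{x}^*$ of non-isolated vertices of $\Gamma_{a,b}(G/N)$ in which consecutive terms are either equal or adjacent. Deleting repetitions gives a walk, hence a path, joining $\ol{x}$ to $\ol{x}^*$ inside $\Gamma^*_{a,b}(G/N)$, which is therefore connected. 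The only non-formal ingredient is the lifting of generating tuples, i.e.\ Gaschütz's lemma; everything else is a routine transport of paths along the graph homomorphism induced by $\pi$.
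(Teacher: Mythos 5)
Your proof is correct and is exactly the argument the paper has in mind: the paper states the lemma and immediately remarks that it is an easy consequence of Gasch\"utz's lifting theorem, which is precisely the one non-formal ingredient you isolate. The surrounding bookkeeping (edges map to edges, Gasch\"utz gives surjectivity onto the non-isolated vertices, paths push forward) is the routine part the paper leaves implicit.
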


This lemma is an easy consequence of the following result due to  Gasch\"utz \cite{Ga}.

\begin{thm}
Let $G$ be any group that can be generated by $d$ elements and $N$ be any    finite normal subgroup of $G.$ Let $\eta: G \to \bar G = G/N$ be the
natural homomorphism given by $\eta: g \to \bar g = Ng$ for all $g \in G.$ Then for any generating
$d$-tuple $(y_1, y_2, \dots, y_d)$ of elements of $G/N$ there exist elements $x_1, x_2,\dots, x_d \in G$ such that
$\langle x_1, x_2,\dots, x_d\rangle = G$ and $\bar x_i = y_i$ for $1 \leq i \leq d.$
\end{thm}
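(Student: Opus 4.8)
The plan is to establish first the case in which $G$ is finite, by a Möbius-inversion count over the subgroup lattice, and then to reduce the general statement (with $N$ finite but $G$ possibly infinite) to this finite case.

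For the finite case, I would fix a generating $d$-tuple $\bar y=(y_1,\dots,y_d)$ of $\bar G=G/N$ and let $g(\bar y)$ denote the number of lifts $(x_1,\dots,x_d)\in G^d$ of $\bar y$ that generate $G$; it suffices to show $g(\bar y)>0$. For each subgroup $H\le G$ I would count the number $\ell(H)$ of tuples in $H^d$ reducing to $\bar y$. If $HN\neq G$, then $\langle y_1,\dots,y_d\rangle=\bar G\not\le HN/N$, so some $y_i$ has no preimage in $H$ and $\ell(H)=0$; if instead $HN=G$, then $H\to\bar G$ is surjective with kernel $H\cap N$, each $y_i$ has exactly $|H\cap N|$ preimages in $H$, and hence $\ell(H)=|H\cap N|^d$. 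The crucial point is that $\ell(H)$ does not depend on the chosen tuple $\bar y$. Since every tuple counted by $\ell(H)$ generates some subgroup $K\le H$, which necessarily satisfies $KN=G$, one obtains the relation $\ell(H)=\sum_{K\le H}g_K(\bar y)$ for all $H$, where $g_K(\bar y)$ counts the lifts of $\bar y$ generating exactly $K$ and $g_G(\bar y)=g(\bar y)$. Applying Möbius inversion with the function $\mu_G$ introduced above then yields
\[
g(\bar y)=\sum_{K\le G}\mu_G(K)\,\ell(K)=\sum_{K\le G,\ KN=G}\mu_G(K)\,|K\cap N|^d ,
\]
an expression that is manifestly independent of $\bar y$. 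Because $G$ is $d$-generated, some generating $d$-tuple of $G$ exists and reduces to a generating $d$-tuple $\bar y_0$ of $\bar G$ with $g(\bar y_0)\ge 1$; hence this common value is positive for every $\bar y$, which proves the finite case.

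To reduce the general case, I would choose any lift $(x_1,\dots,x_d)$ of $\bar y$ and, for $\vec n\in N^d$, set $H_{\vec n}=\langle x_1n_1,\dots,x_dn_d\rangle$. Each $H_{\vec n}$ satisfies $H_{\vec n}N=G$, so $|G:H_{\vec n}|=|N:N\cap H_{\vec n}|$ is finite; consequently the normal core $R=\core_G\big(\bigcap_{\vec n\in N^d}H_{\vec n}\big)$ is a finite-index normal subgroup of $G$ lying inside every $H_{\vec n}$. Passing to the finite group $G/R$, the images of the tuples $(x_1n_1,\dots,x_dn_d)$ range over all lifts of the image of $\bar y$ modulo $NR/R$, so the finite case produces some $\vec n$ for which $(x_1n_1,\dots,x_dn_d)$ generates $G/R$; since $R\le H_{\vec n}$, this forces $H_{\vec n}=G$, giving the required generating lift.

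The main obstacle is the invariance statement: one must see that both the fibre sizes $\ell(H)=|H\cap N|^d$ and the Möbius coefficients depend only on the subgroup lattice of $G$ and on $N$, never on the particular tuple $\bar y$ downstairs. The delicate point guaranteeing this is that every lift of a generating tuple of $\bar G$ generates a supplement of $N$, so that the inversion genuinely ranges over the subgroups $K$ with $KN=G$; in the infinite case the same observation, in the form that supplements to a finite normal subgroup have finite index, is exactly what makes the reduction to $G/R$ possible.
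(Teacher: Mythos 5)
Your proof is correct. A point of comparison worth making explicit: the paper does not prove this statement at all --- it is quoted verbatim as a theorem of Gasch\"utz, with a citation to \cite{Ga}, and serves only to justify Lemma \ref{norsbg}. So there is no ``paper proof'' to match; what you have done is reconstruct the classical argument, and you have reconstructed it faithfully. Your finite-case count is Gasch\"utz's original one, merely phrased through explicit M\"obius inversion instead of the more common induction over subgroups $H$ with $HN=G$; the two are equivalent, and your inversion is consistent with the convention $\sum_{K\geq H}\mu_G(K)=\delta_{H,G}$ adopted in Section 2 of the paper, since $f(H)=\sum_{K\leq H}g_K(\bar y)$ for all $H$ does yield $g_G(\bar y)=\sum_{K}\mu_G(K)f(K)$. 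All the delicate points check out: $\ell(H)=0$ when $HN\neq G$ uses that $\bar y$ generates $\bar G$; $\ell(H)=|H\cap N|^d$ when $HN=G$ is the correct fibre count; positivity is anchored by the hypothesis $d\geq d(G)$, which supplies one generating lift. The reduction to the finite case is also sound: $N^d$ is finite, so there are only finitely many subgroups $H_{\vec n}$, each of finite index because $H_{\vec n}N=G$ and $|G:H_{\vec n}|=|N:N\cap H_{\vec n}|\leq |N|$; hence the core $R$ has finite index, $G/R$ is a finite $d$-generated group, the cosets $x_inR$ with $n\in N$ exhaust the lifts of $x_iNR$ since $NR/R=\{nR : n\in N\}$, and $H_{\vec n}R=G$ together with $R\leq H_{\vec n}$ forces $H_{\vec n}=G$, while $n_i\in N$ guarantees the new tuple still lifts $\bar y$. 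In short: the argument is complete, and it buys the paper something it did not supply, namely a self-contained proof of the cited result in the generality actually stated (infinite $G$, finite $N$).
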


\section{Bounding the diameter of $\Gamma^*_{a,b}(G)$ when $G$ is soluble}\label{boundsol}

In \cite{diam} it is proved that if $G$ is a 2-generator finite soluble group, then the graph $\Gamma^*_{1,1}(G)$  obtained from the generating graph by removing the isolated vertices has a very small diameter: indeed $\diam(\Gamma^*_{1,1}(G))\leq 3.$ Moreover $\diam(\Gamma^*_{1,1}(G))\leq 2$ if
$G$  has the property that $|\End_G(V)|>2$ for every non-trivial irreducible $G$-module $V$ which is $G$-isomorphic to a complemented chief factor of $G$. The aim of this section is to bound $\diam(\Gamma^*_{a,b}(G))$ for arbitrary values of $a$ and $b$ when $G$ is soluble.

\

Before  dealing with  the general case of a soluble group $G$, we need to collect  in the next four lemmas a series of results in linear algebra. Denote by $M_{r\times s}(F)$ the set of the $r\times s$ matrices with coefficients over the field $F.$

\begin{lemma}\label{comcom}{\cite[Lemma 3]{CLis}}
	Let $V$ be a finite dimensional vector space over the field $F$.
	If $W_1$ and $W_2$ are subspaces of $V$ with $\dim W_1=\dim W_2$,
	then $V$ contains a subspace $U$ such that $V=W_1\oplus U=W_2\oplus U.$
\end{lemma}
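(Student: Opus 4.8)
The plan is to construct $U$ explicitly, the key idea being to take $U$ to be (an extension of) the graph of a suitable isomorphism between complementary pieces of $W_1$ and $W_2$. First I would reduce to the case $V=W_1+W_2$. Choose any subspace $D$ with $V=(W_1+W_2)\oplus D$; if $U_0$ is a common complement of $W_1$ and $W_2$ \emph{inside} $W_1+W_2$, meaning $W_1+W_2=W_1\oplus U_0=W_2\oplus U_0$, then $U=U_0\oplus D$ works for $V$, since $W_i\oplus U=W_i\oplus U_0\oplus D=(W_1+W_2)\oplus D=V$ for $i=1,2$. Thus it suffices to produce $U_0$, and we may assume $V=W_1+W_2$.

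Next, set $I=W_1\cap W_2$ and $k=\dim W_1=\dim W_2$. Writing $W_1=I\oplus A_1$ and $W_2=I\oplus A_2$ for chosen complements $A_i$ of $I$ in $W_i$, a short dimension count together with $A_1\cap A_2\subseteq W_1\cap W_2=I$ shows that the sum $I+A_1+A_2$ is direct and equals $V$; in particular $\dim A_1=\dim A_2=k-\dim I$. I would then fix an isomorphism $\phi\colon A_1\to A_2$, which exists precisely because these two spaces have equal dimension, and define $U=\{a+\phi(a)\mid a\in A_1\}$, the graph of $\phi$, so that $\dim U=k-\dim I$.

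The heart of the argument, and the step requiring the most care, is to check that this single $U$ is simultaneously a complement of $W_1$ and of $W_2$, all computations being carried out inside the fixed decomposition $V=I\oplus A_1\oplus A_2$. For $W_1\cap U$: if $a+\phi(a)\in W_1=I\oplus A_1$, then $\phi(a)\in A_2$ also lies in $I\oplus A_1$, so directness forces $\phi(a)=0$ and hence $a=0$. A symmetric computation, using that $a\in A_1\cap(I\oplus A_2)=0$ whenever $a+\phi(a)\in W_2=I\oplus A_2$, gives $W_2\cap U=0$. Since $\dim W_i+\dim U=k+(k-\dim I)=\dim V$ for each $i$, these trivial intersections upgrade to the required decompositions $V=W_1\oplus U=W_2\oplus U$. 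The only genuine subtlety is bookkeeping: keeping straight which direct-sum decomposition of $V$ justifies each intersection being zero. Once $V=I\oplus A_1\oplus A_2$ is fixed at the outset, both verifications are immediate, and re-attaching the factor $D$ from the first paragraph completes the proof in the general case.
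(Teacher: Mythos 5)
Your argument is correct and complete. Note that the paper itself gives no proof of this lemma: it is quoted verbatim from \cite[Lemma 3]{CLis}, so there is no in-paper argument to compare against. Your construction is the standard one: after splitting off a complement $D$ of $W_1+W_2$ and a complement $A_i$ of $I=W_1\cap W_2$ in each $W_i$, the graph of an isomorphism $A_1\to A_2$ meets both $W_1$ and $W_2$ trivially, and the dimension count $\dim W_i+\dim U=2k-\dim I=\dim V$ upgrades this to a direct sum. All the steps check out; in particular the directness of $I+A_1+A_2$ follows already from the dimension count $\dim V=2k-\dim I=\dim I+\dim A_1+\dim A_2$, so the observation $A_1\cap A_2\subseteq I$ is not actually needed. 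For comparison, the other standard route is induction on the codimension: since a vector space is never the union of two proper subspaces, one picks $v\notin W_1\cup W_2$, passes to $W_i\oplus\langle v\rangle$, and appends $v$ to the common complement obtained inductively; that version is shorter but less explicit, whereas yours exhibits $U$ concretely, which is in the spirit of how the lemma is used later (choosing the columns of the matrix $C_2$ to be a basis of $U$).
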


\begin{lemma}\label{lemmanil} Assume that $a$ and $b$ are non-negative integers. Let $V$ be a vector space of dimension $\delta$ over a finite field $F$ and let $x=(v_1,\dots,v_a)$ and $y=(w_1,\dots,w_a)$ be two elements of $V^a$ with
	$\dim_F \langle v_1,\dots,v_a\rangle\geq \delta-b$ and $\dim_F \langle w_1,\dots,w_a\rangle\geq \delta-b$. Then there exists $z=(z_1,\dots,z_b)\in V^b$ such that $\langle v_1,\dots,v_a,z_1,\dots z_b\rangle=\langle w_1,\dots, w_a,z_1,\dots z_b\rangle=V.$ 
\end{lemma}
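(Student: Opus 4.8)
The plan is to reduce everything to Lemma \ref{comcom}. Set $W_1=\langle v_1,\dots,v_a\rangle$ and $W_2=\langle w_1,\dots,w_a\rangle$; the hypotheses $\dim W_i\ge\delta-b$ are equivalent to $\dim(V/W_i)\le b$ for $i=1,2$. The whole statement amounts to producing a single subspace $U\le V$ with $W_1+U=W_2+U=V$ and $\dim U\le b$. Once such a $U$ is found, I finish by choosing $z_1,\dots,z_b\in V$ whose span equals $U$: take a basis of $U$ and pad the remaining entries with $0$, which is possible precisely because $\dim U\le b$. Then $\langle v_1,\dots,v_a,z_1,\dots,z_b\rangle=W_1+U=V$ and likewise $\langle w_1,\dots,w_a,z_1,\dots,z_b\rangle=W_2+U=V$, as required.

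The one place where Lemma \ref{comcom} does not apply verbatim is that it requires $\dim W_1=\dim W_2$, whereas here the two spans may have different dimensions; this is the only genuine obstacle. To get around it I would relabel so that $\dim W_1\le\dim W_2$ and pick any subspace $W_2'\le W_2$ with $\dim W_2'=\dim W_1$. Applying Lemma \ref{comcom} to the equal-dimensional pair $W_1,W_2'$ produces a subspace $U$ with $V=W_1\oplus U=W_2'\oplus U$. Since $W_2'\le W_2$, one has $V=W_2'+U\subseteq W_2+U\subseteq V$, hence $W_2+U=V$ as well. Thus the common complement $U$ of $W_1$ and $W_2'$ automatically covers the quotient by the larger space $W_2$, which is exactly the trick that removes the equal-dimension restriction.

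It remains only to verify the dimension bound: $\dim U=\delta-\dim W_1$, and by hypothesis $\dim W_1\ge\delta-b$, so $\dim U\le b$, precisely what the final padding step needs. The degenerate cases are harmless and fall out of the same argument: if $a=0$ both spans are $\{0\}$ and the hypothesis forces $\delta\le b$, so one takes $U=V$, spanned by $b$ vectors; if $b=0$ the hypothesis forces $W_1=W_2=V$ and one takes $U=\{0\}$ together with the empty tuple. I expect no further difficulties, since every step after the shrinking of $W_2$ to $W_2'$ is routine linear algebra.
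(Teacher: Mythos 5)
Your proof is correct, and it takes a genuinely different route from the paper's. The paper proves the lemma by a greedy induction: it builds the tuple $z$ one coordinate at a time, at each step choosing a vector $\tilde z$ outside the union $\tilde U_1\cup\tilde U_2$ of the two current spans (possible because $2|F|^s-1<|F|^\delta$ when $s<\delta$), thereby raising both dimensions by one and reducing $b$ by one until the base case $s=0$, $b\geq\delta$ is reached. You instead reduce the whole statement in one shot to the common-complement Lemma \ref{comcom}: your observation that the lemma is exactly the existence of a single subspace $U$ with $W_1+U=W_2+U=V$ and $\dim U\leq b$ is correct, and your device for handling unequal dimensions --- shrink the larger span $W_2$ to a subspace $W_2'$ of dimension $\dim W_1$, take a common complement $U$ of $W_1$ and $W_2'$, and note that $W_2+U\supseteq W_2'+U=V$ while $\dim U=\delta-\dim W_1\leq b$ --- is exactly the right trick; the padding by zero vectors and the degenerate cases $a=0$, $b=0$ are handled properly. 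Your argument is shorter and avoids the induction entirely; it also makes no use of the finiteness of $F$ beyond what is already hidden in Lemma \ref{comcom} (which holds over any field), whereas the paper's counting step is phrased specifically in terms of finite-field cardinalities. The two proofs are of course cousins, since Lemma \ref{comcom} is itself established by avoiding unions of proper subspaces, but as written yours is a clean deduction from a lemma the paper already has on hand, while the paper reproves the avoidance argument inline.
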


\begin{proof}
	Let $U_1=\langle v_1,\dots,v_a\rangle,$
	$U_2=\langle w_1,\dots,w_a\rangle$  and $s=\min\{\dim_FU_1,\dim_FU_2\}.$ Clearly we may assume $s<\delta.$
	We prove our claim by induction on $s.$ If $s=0,$ then $b\geq \delta$ and it suffices to choose $z_1,\dots,z_b$ so that $\langle z_1,\dots,z_b \rangle=V.$
	Assume $s\neq 0.$ Notice that $b+s\geq \delta.$ Let $\tilde v_1,\dots,\tilde v_s$ be linearly independent elements of $U_1$ and $\tilde w_1,\dots,\tilde w_s$ linearly independent elements of $U_2.$ Moreover let $\tilde U_1=\langle 
	\tilde v_1,\dots,\tilde v_s\rangle$ and $\tilde U_2=\langle 
	\tilde w_1,\dots,\tilde w_s\rangle$. Since $|\tilde U_1 \cup \tilde U_2|\leq 2|F|^s-1<|F|^\delta,$ there exists $\tilde z \in V \setminus (\tilde U_1\cup \tilde U_2).$ Consider $\tilde x= (\tilde v_1,\dots,\tilde v_s, \tilde z)$ and
	$\tilde y= (\tilde w_1,\dots,\tilde w_s, \tilde z).$ Since $(s+1)+(b-1)\geq \delta$ and $\dim_F\langle  {\tilde v_1},\dots,\tilde v_s, \tilde z\rangle=\dim_F\langle  \tilde w_1,\dots,\tilde w_s, \tilde z\rangle=s+1,$ by induction there exist 
	$\tilde z_1,\dots,\tilde z_{b-1}$ such that $\langle \tilde v_1,\dots,\tilde v_s, \tilde z, \tilde z_1,\dots,\tilde z_{b-1}\rangle=\langle \tilde w_1,\dots,\tilde w_s, \tilde z, \tilde z_1,\dots,\tilde z_{b-1}\rangle=V.$ Clearly $z=(\tilde z, \tilde z_1,\dots,\tilde z_{b-1})$ satisfies the conditions  $\langle  v_1,\dots, v_a, \tilde z, \tilde z_1,\dots,\tilde z_{b-1}\rangle=\langle  w_1,\dots, w_a, \tilde z, \tilde z_1,\dots,\tilde z_{b-1}\rangle=V.$
\end{proof}


\begin{lemma}\label{prs}Let $F$ be a finite field and assume $\alpha\leq \beta.$ Given $R\in  M_{\alpha\times \beta}(F)$ and $S\in M_{\alpha\times \gamma}(F)$ consider the matrix 
	$\begin{pmatrix}R&S\end{pmatrix}\in M_{\alpha\times (\beta+\gamma)}$. Assume that
	$\ran \begin{pmatrix}R&S\end{pmatrix}=\alpha$ and let $\pi_{R,S}$ be the probability that a matrix $Z \in M_{\gamma\times \beta}(F)$ satisfies the condition $\ran(R+SZ)=\alpha.$ Then $$\pi_{R,S} > 1-\frac{q^\alpha}{q^\beta(q-1)}.$$
\end{lemma}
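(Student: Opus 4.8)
The plan is to pass to the complementary event and estimate, via a union bound, the probability that $R+SZ$ fails to have full row rank. Throughout write $q=|F|$. Since $\alpha\le\beta$, an $\alpha\times\beta$ matrix has rank $\alpha$ precisely when its rows are linearly independent, i.e.\ when its left kernel is trivial. Thus $\ran(R+SZ)<\alpha$ if and only if there is a nonzero row vector $u\in F^\alpha$ with $u(R+SZ)=0$. I would therefore bound the probability of this bad event and then set $\pi_{R,S}=1-P(\ran(R+SZ)<\alpha)$.

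Fix a nonzero $u\in F^\alpha$ and put $v=uR\in F^\beta$ and $w=uS\in F^\gamma$. The hypothesis $\ran\begin{pmatrix}R&S\end{pmatrix}=\alpha$ says exactly that no nonzero $u$ annihilates both $R$ and $S$, so $(v,w)\neq(0,0)$. The condition $u(R+SZ)=0$ rewrites as $wZ=-v$. If $w=0$ then $v\neq 0$ and this can never hold, so such directions contribute nothing; this is the one place where the rank assumption on $\begin{pmatrix}R&S\end{pmatrix}$ is used. If instead $w\neq 0$, then for each of the $\beta$ columns $c_j$ of $Z$ the required equation $w\cdot c_j=-v_j$ is a single nonzero linear condition on $c_j\in F^\gamma$, with $q^{\gamma-1}$ solutions; hence $\#\{Z:wZ=-v\}=q^{\beta(\gamma-1)}$ out of $q^{\gamma\beta}$, so $P(wZ=-v)=q^{-\beta}$ for every such $u$ (equivalently, $Z\mapsto wZ$ is a surjective $F$-linear map and $wZ$ is uniform on $F^\beta$).

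Finally I would assemble these estimates with a union bound, but taken over \emph{lines} rather than over all nonzero vectors: the event $\{u(R+SZ)=0\}$ depends only on the subspace $\langle u\rangle$, so the failure event is the union of the corresponding events over the $(q^\alpha-1)/(q-1)$ one-dimensional subspaces of $F^\alpha$, each contributing at most $q^{-\beta}$. This gives
\[
P(\ran(R+SZ)<\alpha)\le \frac{q^\alpha-1}{(q-1)\,q^\beta}<\frac{q^\alpha}{(q-1)\,q^\beta},
\]
and passing to the complement yields $\pi_{R,S}>1-\tfrac{q^\alpha}{q^\beta(q-1)}$, the strict inequality being exactly the gap $q^\alpha-1<q^\alpha$.

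The argument is essentially a careful counting/union-bound and I do not expect a serious obstacle. The only two points that require attention are the ones flagged above. First, one must invoke $\ran\begin{pmatrix}R&S\end{pmatrix}=\alpha$ to discard the directions $u$ with $uS=0$: otherwise a left-kernel vector of $R$ alone would force $\ran(R+SZ)<\alpha$ for \emph{every} $Z$, and no bound of this shape could hold. Second, the factor $q-1$ in the denominator is obtained precisely by counting the bad $u$ up to nonzero scalars, i.e.\ summing over projective points; a naive union bound over all $q^\alpha-1$ nonzero vectors would only give the weaker $(q^\alpha-1)/q^\beta$, which is insufficient for large $q$.
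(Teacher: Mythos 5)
Your proof is correct, but it takes a genuinely different route from the one in the paper. The paper first normalizes $S$: it writes $XSY$ in the block form $\begin{pmatrix}I_m&0\\0&0\end{pmatrix}$ with $m=\ran(S)$, observes that after this reduction only the first $m$ rows of $Z$ matter, and then computes the probability \emph{exactly} as the product $\prod_{i=0}^{m-1}\bigl(1-q^{\alpha-m+i-\beta}\bigr)$ that the perturbed rows $v_1+z_1,\dots,v_m+z_m$ stay independent from the fixed rows $v_{m+1},\dots,v_\alpha$; the stated bound then follows by summing the geometric series. You instead avoid any normalization and argue by a union bound over the left kernel: for each projective point $\langle u\rangle$ of $F^\alpha$, either $uS=0$ (in which case $uR\neq 0$ by the rank hypothesis and the direction never contributes) or $uS\neq 0$ and $Z\mapsto uSZ$ is uniform on $F^\beta$, giving failure probability exactly $q^{-\beta}$ per line; summing over the $(q^\alpha-1)/(q-1)$ lines gives the claim. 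Both arguments are complete and yield the stated strict inequality; the paper's intermediate bound $1-q^{\alpha-m}(q^m-1)/(q^\beta(q-1))$ is marginally sharper when $m<\alpha$, while yours is shorter, needs no matrix reduction, and correctly identifies the two essential points (discarding the directions with $uS=0$ via the hypothesis $\ran\begin{pmatrix}R&S\end{pmatrix}=\alpha$, and counting bad directions projectively to obtain the factor $q-1$).
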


\begin{proof}
	There exist $m\leq \min\{\alpha,\gamma\},$  $X \in \GL(\alpha,F)$ and $Y \in \GL(\gamma,F)$ such that
	\[
	XSY=\begin{pmatrix}
	I_m&0_{m\times(\gamma-m)}\\
	0_{(\alpha-m)\times m}&0_{(\alpha-m)\times(\gamma-m)}
	\end{pmatrix},
	\] 
	where $I_m$ is the identity element in $M_{m\times m}(F).$
	Since
	\[
	\alpha=\ran \begin{pmatrix}R&S\end{pmatrix}=\ran\left( X\begin{pmatrix}R&S\end{pmatrix}\begin{pmatrix}I_\beta&0_{\beta\times\gamma}\\0_{\gamma\times\beta}&Y\end{pmatrix}\right)=\ran\begin{pmatrix}XR&XSY\end{pmatrix}
	\]
	and   
	\[
	\begin{aligned}
	\ran(R+SZ)&=\ran(X(R+SZ))=\ran(XR+XSZ)\\
	&= \ran(XR+XSY(Y^{-1}Z)),
	\end{aligned}
	\] 
	it is not restrictive (replacing $R$ by $XR$, $S$ by $XSY$ and $Z$ by $Y^{-1}Z$) to assume 
	\[
	S=\begin{pmatrix}
	I_m&0_{m\times (\gamma-m)}\\
	0_{(\alpha-m)\times m}&0_{(\alpha-m)\times (\gamma-m)}
	\end{pmatrix}.
	\]	
	Denote by $v_1,\dots,v_\alpha$ the rows of $R$ and by $z_1,\dots,z_\gamma$ the rows of $Z.$ The fact that the rows of $(R\ S)$ are linearly independent implies that $v_{m+1},\dots,v_\alpha$ are linearly independent vectors of $F^\beta.$ The condition $\ran(R+SZ)=\alpha$ is equivalent to asking that
	\[v_1+z_1,\dots,v_m+z_m,v_{m+1},\dots,v_\alpha\] are linearly independent.
	The probability that $z_1,\dots,z_m$ satisfy this condition is 
	\[
	\left(1-\frac{q^{\alpha-m}}{q^\beta}\right)\left(1-\frac{q^{\alpha-m+1}}{q^\beta}\right)\cdots \left(1-\frac{q^{\alpha-m+(m-1)}}{q^\beta}\right).
	\]
	Hence
	\[
	\begin{aligned}
	\pi_{R,S}&=\left(1-\frac{q^{\alpha-m}}{q^\beta}\right)\left(1-\frac{q^{\alpha-m+1}}{q^\beta}\right)\cdots \left(1-\frac{q^{\alpha-m+(m-1)}}{q^\beta}\right)\\&\geq 1-\frac{q^{\alpha-m}(1+q+\dots+q^{m-1})}{q^\beta}\\&=1-\frac{q^{\alpha-m}(q^m-1)}{q^\beta(q-1)}>1-\frac{q^\alpha}{q^\beta(q-1)}.\qedhere
	\end{aligned}
	\]   
\end{proof}

\begin{lemma}
	\label{que}
	Let $F$ be a finite field. Given positive integers $u,v,n,t$ satisfying  $n\leq \min\{u,v\}$ and $t+n=u+v$, suppose   that $A_1,A_2 \in M_{n\times u}(F)$,  $B\in  M_{n\times v}(F)$, $D_1, D_2 \in  M_{t\times u}(F)$ with the property that
	\[
	\ran \begin{pmatrix}B&A_1\end{pmatrix}=\ran \begin{pmatrix}B&A_2\end{pmatrix}=n,
	\]
	\[
	\ran \begin{pmatrix}A_1\\D_1\end{pmatrix}=\ran \begin{pmatrix}A_2\\D_2\end{pmatrix}=u.
	\]
	Then there exists $C\in M_{t\times v}(F)$ such that
	\[
	\det\begin{pmatrix}B&A_1\\C&D_1\end{pmatrix}\neq 0 \quad \text { and } \quad \det\begin{pmatrix}B&A_2\\C&D_2\end{pmatrix}\neq 0,
	\]
	except when $|F|=2$, $n=v$ and $\det B \neq 0$.
\end{lemma}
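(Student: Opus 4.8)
The plan is to read the two determinant conditions as the simultaneous invertibility of the $N\times N$ matrices $M_i=\begin{pmatrix}B&A_i\\C&D_i\end{pmatrix}$, where $N=n+t=u+v$, and to split the argument according to whether $\ran(B)=v$ or $\ran(B)<v$. Since $\ran(B)\le\min\{n,v\}=n\le v$, the equality $\ran(B)=v$ forces $n=v$ and $B\in\GL(v,F)$, so this single case is exactly the exceptional configuration of the statement (when $q:=|F|=2$); in every other case I will produce a $C$ working for both $i$. Throughout I may replace $B,A_i,C,D_i$ by $P_1BQ_1,\ P_1A_iQ_2,\ P_2CQ_1,\ P_2D_iQ_2$ for fixed invertible $P_1,Q_1,P_2,Q_2$ of the appropriate sizes, since such block–diagonal row and column operations on $M_i$ preserve both rank hypotheses and the nonvanishing of $\det M_1,\det M_2$, while only reparametrising the unknown $C$ (via $C\mapsto P_2CQ_1$).

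Assume first $\ran(B)=\rho<v$. Using $P_1,Q_1$ I normalise $B=\begin{pmatrix}I_\rho&0\\0&0\end{pmatrix}$; performing the ($C$-dependent) row elimination of the block $I_\rho$, and then, for each $i$ separately, clearing the block $A_i'^{(2)}$ of full row rank $n-\rho$ (forced by $\ran\begin{pmatrix}B&A_i\end{pmatrix}=n$), the condition $\det M_i\ne 0$ reduces to the invertibility of a $t\times t$ matrix of the shape $\begin{pmatrix}\widehat C&F_i\end{pmatrix}$, where $\widehat C$ is the free $t\times(v-\rho)$ block of $C$ and $F_i$ is a $t\times(u-n+\rho)$ matrix built from $D_i,A_i'$ and the remaining free block of $C$. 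Using that remaining free block (the one entering through $A_i'^{(1)}$) I arrange that $F_1$ and $F_2$ both have full column rank $u-n+\rho$, so that $\langle F_1\rangle$ and $\langle F_2\rangle$ have the same dimension $u-n+\rho$ in $F^t$. Since $v-\rho=t-(u-n+\rho)\ge 1$, Lemma \ref{comcom} yields a common complement $U$ of $\langle F_1\rangle$ and $\langle F_2\rangle$, and taking the columns of $\widehat C$ to be a basis of $U$ makes both $\begin{pmatrix}\widehat C&F_i\end{pmatrix}$ invertible. This argument is valid over every finite field, so no exception arises when $\ran(B)<v$.

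Assume now $n=v$ and $\det B\ne 0$, and take $q\ge 3$. Taking a Schur complement gives $\det M_i=\det B\cdot\det(D_i-CB^{-1}A_i)$, and here $t=u$, so I must find one $C$ making $D_i-CB^{-1}A_i$ invertible for $i=1,2$. Transposing, $(D_i-CB^{-1}A_i)^T=D_i^T+\bigl(-(B^{-1}A_i)^T\bigr)C^T$ has the form $R+SZ$ of Lemma \ref{prs} with $R=D_i^T$, $S=-(B^{-1}A_i)^T$, $Z=C^T$ and $\alpha=\beta=u$; moreover $\ran\begin{pmatrix}A_i\\D_i\end{pmatrix}=u$ gives $\ran\begin{pmatrix}R&S\end{pmatrix}=u=\alpha$. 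Hence a uniformly random $C$ keeps each rank equal to $u$ with probability $>1-\tfrac{q^u}{q^u(q-1)}=1-\tfrac1{q-1}$, so a random $C\in M_{t\times v}(F)$ fails one of the two conditions with probability $<\tfrac{2}{q-1}\le 1$ once $q\ge 3$; a good $C$ therefore exists. For $q=2$ the bound is vacuous and the conclusion genuinely fails: with $n=u=v=t=1$ over $\mathbb{F}_2$ and $B=A_1=A_2=1$, $D_1=0$, $D_2=1$, the first condition forces $C=1$ and the second forces $C=0$. This is precisely the excluded case.

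The main obstacle is the bookkeeping in the slack case: one must verify that the simultaneous block operations keep both rank hypotheses intact, and, above all, that the residual free block of $C$ really can be used to place both $F_1$ and $F_2$ in full column rank of the common value $u-n+\rho$ (a genuine, if easier, two–condition problem) so that Lemma \ref{comcom} applies. One must also check that the dichotomy $\ran(B)=v$ versus $\ran(B)<v$ exhausts the cases in a way matching the stated exception exactly. The probabilistic input of Lemma \ref{prs} is then needed only in the single tight case $n=v$, $\det B\ne 0$, where it both disposes of $q\ge 3$ and explains why $q=2$ must be excluded.
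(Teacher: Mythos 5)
Your architecture is essentially the paper's: normalise $B$ to $\bigl(\begin{smallmatrix}I_\rho&0\\0&0\end{smallmatrix}\bigr)$, eliminate to reduce $\det M_i\neq 0$ to the invertibility of a $t\times t$ matrix $\begin{pmatrix}\widehat C&F_i\end{pmatrix}$ with $F_i=D_{i1}-C_1A_{i1}^{*}$ (up to transpose and sign), choose $C_1$ so that $F_1,F_2$ both have full column rank $u-n+\rho$, and finish with the common-complement Lemma \ref{comcom}. The tight case $n=v$, $\det B\neq 0$ via Schur complement and Lemma \ref{prs}, and your $\mathbb{F}_2$ counterexample with $n=u=v=t=1$, $D_1=0$, $D_2=1$, are both fine. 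But there is a genuine gap exactly where you flag "the main obstacle": in the slack case $\rho=\ran(B)<v$ you simply \emph{assert} that the free block $C_1$ can be chosen so that $F_1$ and $F_2$ simultaneously have full column rank, and you never supply an argument. This simultaneous two-condition rank problem is the crux of the whole lemma --- it is not bookkeeping --- and your closing claim that "the probabilistic input of Lemma \ref{prs} is needed only in the single tight case" is backwards: Lemma \ref{prs} is precisely the tool that closes this step, and it is the paper's reason for proving that lemma in the form it has.

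Concretely, the paper transposes and applies Lemma \ref{prs} with $R_i=D_{i1}^{\mathrm T}$, $S_i=A_{i1}^{*\mathrm T}$, $Z=-C_1^{\mathrm T}$, $\alpha=u-(n-\rho)$, $\beta=t$, $\gamma=\rho$; the hypothesis $\ran\bigl(\begin{smallmatrix}A_i\\D_i\end{smallmatrix}\bigr)=u$ gives $\ran\begin{pmatrix}R_i&S_i\end{pmatrix}=\alpha$, and in your slack case $\alpha=u-n+\rho<u-n+v=t=\beta$, so each success probability exceeds $1-\frac{q^{\alpha}}{q^{\beta}(q-1)}\geq 1-\frac{1}{q(q-1)}\geq\frac12$ even for $q=2$; a union bound then produces a single $C_1$ working for both $i$. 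With that inserted, your case split ($\rho<v$ handled uniformly over all $q$; $\rho=v$, i.e.\ $n=v$ and $\det B\neq0$, handled by Lemma \ref{prs} for $q\geq 3$ and excluded for $q=2$) reproduces the paper's proof; the only real difference is that the paper runs one unified argument in which the exceptional case emerges at the end as the unique configuration where the bound of Lemma \ref{prs} becomes vacuous, rather than being separated out at the start. As written, though, the slack case of your proposal is incomplete.
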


\begin{proof} Let $r=\ran(B).$ There exist $X \in \GL(n,F)$ and $Y\in \GL(v,F)$ such that
	\[
	XBY=\begin{pmatrix}
	I_r&0_{r\times (v-r)}\\
	0_{(n-r) \times r}&0_{(n-r) \times (v-r)}
	\end{pmatrix},
	\]
	where $I_r$ is the identity element in $M_{r\times r}(F).$
	Let $A_{11}, A_{21}\in M_{r\times u}(F)$ and $A_{12}, A_{22}\in M_{(n-r)\times u}(F)$
	such that
	\[
	XA_1=\begin{pmatrix}A_{11}\\A_{12}\end{pmatrix},\quad XA_2=\begin{pmatrix}A_{21}\\A_{22}\end{pmatrix}.
	\]
	For $i\in\{1,2\},$ since
	\[\begin{aligned}
	n=\ran \begin{pmatrix}B & A_i\end{pmatrix}&=
	\ran \left(X\begin{pmatrix}B&A_i\end{pmatrix}\begin{pmatrix}Y&0_{v \times u}\\0_{u\times v }&I_u\end{pmatrix}\right)\\ &=\ran \begin{pmatrix}
	I_r&0_{r\times (v-r)}&A_{i1}
	\\0_{(n-r)\times r}&0_{(n-r)\times (v-r)}&A_{i2}
	\end{pmatrix},
	\end{aligned}
	\]
	it must be $\ran (A_{i2})=n-r$.
	In particular there exists $Z_i\in \GL(u,F)$ such that
	\[
	XA_iZ_i=\begin{pmatrix}
	A_{i1}\\
	A_{i2}
	\end{pmatrix}Z_i=
	\begin{pmatrix}
	A_{i1}^*&A_{i2}^*\\
	0_{(n-r) \times u-(n-r)}&I_{n-r}
	\end{pmatrix},
	\]
	with $A_{i1}^* \in M_{r\times u-(n-r)}(F)$ and  $A_{i2}^* \in M_{r\times (n-r)}(F).$
	Notice that
	\[
	\begin{aligned}
	\det\begin{pmatrix}
	XBY&XA_iZ_i\\
	CY&D_iZ_i
	\end{pmatrix}&=
	\det\left(\begin{pmatrix}
	X&0_{n \times t}\\
	0_{t \times n}&I_t
	\end{pmatrix}
	\begin{pmatrix}
	B&A_i\\
	C&D_i
	\end{pmatrix}
	\begin{pmatrix}
	Y&0_{v \times u}\\
	0_{u \times v}&Z_i
	\end{pmatrix}
	\right)\\&=
	\det(X)\det(Y)\det(Z_i)\det
	\begin{pmatrix}
	B&A_i\\
	C&D_i
	\end{pmatrix}.
	\end{aligned}
	\]
	This means that it is not restrictive to assume
	\[
	B=\begin{pmatrix}
	I_r&0_{r \times (v-r)}\\
	0_{(n-r) \times r}&0_{(n-r) \times (v-r)}
	\end{pmatrix},\quad 
	A_i=\begin{pmatrix}
	A_{i1}^*&A_{i2}^*\\
	0_{(n-r) \times u-(n-r)}&I_{n-r}
	\end{pmatrix},
	\]
	with $A_{i1}^* \in M_{r\times  u-(n-r)}(F),$ $A_{i2}^* \in M_{r\times (n-r)}(F).$
	Let $C_1\in M_{t\times r}(F),  C_2\in M_{t\times v-r}(F),$ $D_{i1}\in M_{t\times u-(n-r)}(F), D_{i2}\in M_{t\times (n-r)}(F)$ such that 
	\[
	\begin{pmatrix}C_1&C_2\end{pmatrix}=C  \quad \text{and} \quad
	\begin{pmatrix}D_{i1}&D_{i2}\end{pmatrix}=D_i.
	\] Notice that
	\[
	\begin{aligned}& \det\begin{pmatrix}
	B&A_i\\
	C&D_{i}
	\end{pmatrix}
	=\det\begin{pmatrix}
	I_r&0_{r\times (v-r)}&A_{i1}^*&A_{i2}^*\\
	0_{(n-r)\times r}&0_{(n-r)\times (v-r)}&0_{(n-r) \times u-(n-r)}&I_{n-r}\\
	C_1&C_{2}&D_{i1}&D_{i2}
	\end{pmatrix}\\&=(-1)^{n-r}\det
	\begin{pmatrix}
	I_r&0_{r\times (v-r)}&A_{i1}^*\\
	C_1&C_{2}&D_{i1}
	\end{pmatrix}\\&=(-1)^{n-r}\det\left(
	\begin{pmatrix}
	I_r&0_{r\times(v-r)}&A_{i1}^*\\
	C_1&C_{2}&D_{i1}
	\end{pmatrix}
	\begin{pmatrix}
	I_r&0_{r\times (v-r)}&-A_{i1}^*\\
	0_{(v-r)\times r }&I_{v-r}&0_{(v-r) \times u-(n-r)}\\
	0_{u-(n-r)\times r}&0_{u-(n-r)\times (v-r)}&I_{u-(n-r)}
	\end{pmatrix}\right)\\&=(-1)^{n-r}\det
	\begin{pmatrix}
	I_r&0_{r\times (v-r)}&0_{r\times u-(n-r)}\\
	C_1&C_{2}&D_{i1}-C_1A_{i1}^*
	\end{pmatrix}\\&=(-1)^{n-r}\det
	\begin{pmatrix}
	C_{2}&D_{i1}-C_1A_{i1}^*
	\end{pmatrix}.
	\end{aligned}
	\]
	Assume that we can find $C_1$ such that 
	\[
	\ran (D_{11}-C_1A_{11}^*) = \ran (D_{21}-C_1A_{21}^*)= u- (n-r)
	\]
and let $W_1, W_2$ be the subspaces of $F^t$ spanned, respectively, by the columns of the two matrices $D_{11}-C_1A_{11}^*$ and $D_{21}-C_1A_{21}^*$. By Lemma \ref{comcom}, there exists a subspace $U$ of $F^t$ such that $F^t=W_1\oplus U=W_2\oplus U.$ If $C_2$ is a matrix whose columns are a basis for $U,$ then  
	$$\det\begin{pmatrix}C_{2}&D_{11}-C_1A_{11}^*\end{pmatrix}\neq 0
	\text { and } \det\begin{pmatrix}C_{2}&D_{21}-C_1A_{21}^* \end{pmatrix}\neq 0
	$$ and $C=(C_1 \ C_2)$ is a matrix with the desired property.
	Set 
	\[
	R_1=D_{11}^\text{T},\ R_2=D_{21}^\text{T},\ S_1=A_{11}^{*\text{T}},\ S_2=A_{21}^{*\text{T}},\ Z= -C_1^\text{T}.
	\]
	The previous observation implies that a matrix $C$ with the requested properties exists if, and only if, there exists $Z\in M_{r\times t}(F)$ such that
	\begin{equation}
	\ran(R_1+S_1Z)=\ran(R_2+S_2Z)=u-(n-r).
	\end{equation} 
	Notice that $R_1, R_2\in M_{u-(n-r)\times t}(F)$, $S_1, S_2\in M_{u-(n-r)\times r}(F)$ have the property that 
	\[
	\ran \begin{pmatrix}R_1&S_1\end{pmatrix}=\ran \begin{pmatrix}R_2&S_2\end{pmatrix}=u-(n-r).
	\]
	If either $|F|=q>2$ or $u-(n-r)<t$, then, by applying Lemma \ref{prs} with $\alpha=u-(n-r), \beta=t,\gamma=r$, we have
	\[
	\pi_{R_1,S_1}>\frac{1}{2} \quad \text { and }\quad  \pi_{R_2,S_2}>\frac{1}{2}
	\]
	and this is sufficient to ensure that a matrix $Z$ with the requested property exists. Therefore we may assume $u-(n-r)=t$  and $q=2.$ This implies that $v=r$, and so that $v=n=r$, i.e. $\det B \neq 0$. This concludes the proof. 
\end{proof}

The main ingredient in the proof of our results about the diameter of $\Gamma^*_{a,b}(G)$ is the theory of crowns, introduced by Gasch\"utz in \cite{Ga2}.
We recall some properties of the crowns of a finite soluble group. 
Let $G$ be a finite soluble group, and let $\mathcal V_G$ be a set
of representatives for the irreducible $G$-groups that are
$G$-isomorphic to a complemented chief factor of $G$. For $V \in
\mathcal V_G$ let $R_G(V)$ be the smallest normal subgroup contained
in $C_G(V)$ with the property that $C_G(V)/R_G(V)$ is
$G$-isomorphic to a direct product of copies of $V$ and it has a
complement in $G/R_G(V)$. The factor group $C_G(V)/R_G(V)$ is
called the $V$-crown of $G$. The non-negative integer
$\delta_G(V)$ defined by $C_G(V)/R_G(V)\cong_G V^{\delta_G(V)}$ is
called the $V$-rank of $G$ and it coincides with the number of
complemented factors in any chief series of $G$ that are
$G$-isomorphic to $V$. If $\delta_G(V) \neq 0$, then the $V$-crown
is the socle of $G/R_G(V)$.

\begin{prop}\label{prouno}\cite[Proposition 2.4]{LM2}
	\label{lemma} Let $G$ and $\mathcal V_G$ be as above. Let $x_{1},
	\ldots , x_{u}$ be elements of $G$ such that $\langle
	x_1,\dots,x_u,R_G(V)\rangle=G$ for any $V \in \mathcal V_G$. Then
	$\langle x_1,\dots,x_u\rangle=G$.
\end{prop}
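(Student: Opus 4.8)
The plan is to argue by contradiction, exploiting the fact that in a soluble group the core of a maximal subgroup produces a primitive quotient whose socle is exactly a complemented chief factor, together with the fact that $R_G(V)$ is \emph{by definition} the smallest normal subgroup realizing the $V$-crown.

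First I would set $H=\langle x_1,\dots,x_u\rangle$ and suppose, for contradiction, that $H\neq G$. Then $H$ is contained in some maximal subgroup $M$ of $G$. Put $N=\Core_G(M)$, the largest normal subgroup of $G$ contained in $M$. Since $M/N$ is a core-free maximal subgroup of $G/N$, the quotient $G/N$ is a primitive soluble group. The structure theory of soluble primitive groups then guarantees that $G/N$ has a unique minimal normal subgroup $A/N$, that $A/N$ is elementary abelian and self-centralizing in $G/N$, and that $M/N$ is a complement to $A/N$ (indeed $(A/N)\cap(M/N)$ is normal in $G/N$ and strictly contained in the minimal $A/N$, hence trivial). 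In particular $A/N$ is a complemented chief factor of $G$, so there is a representative $V\in\mathcal V_G$ with $A/N\cong_G V$.

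The key step is to show $R_G(V)\leq N$. Because $A/N$ and $V$ are isomorphic $G$-modules they have the same action kernel, so self-centrality of the socle gives $C_G(V)=C_G(A/N)=A$. Now I would check that $N$ itself satisfies the conditions defining $R_G(V)$: it is normal in $G$, it is contained in $C_G(V)=A$, the quotient $C_G(V)/N=A/N\cong_G V$ is a direct product of copies of $V$ (namely one copy), and it admits the complement $M/N$ in $G/N$. Since $R_G(V)$ is by definition the smallest normal subgroup of $G$ enjoying these properties, we conclude $R_G(V)\leq N$.

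Finally, $R_G(V)$ is normal and contained in $N\leq M$, so $\langle x_1,\dots,x_u,R_G(V)\rangle\leq\langle M,R_G(V)\rangle=M\neq G$, contradicting the hypothesis $\langle x_1,\dots,x_u,R_G(V)\rangle=G$. Hence $H=G$. I expect the only delicate point to be the appeal to the theory of primitive soluble groups for the identification of the self-centralizing complemented socle $A/N$ and the consequent equality $C_G(V)=A$; everything else is a direct verification against the definition of the crown and a minimality argument. Solubility is used precisely at this point, to ensure that the primitive quotient $G/N$ has abelian, self-centralizing socle.
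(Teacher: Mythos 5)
Your proof is correct: the paper itself gives no proof of this proposition (it is quoted from \cite[Proposition 2.4]{LM2}), and your argument --- pass to a maximal subgroup $M\geq\langle x_1,\dots,x_u\rangle$, observe that the primitive soluble quotient $G/\Core_G(M)$ exhibits a complemented chief factor $A/\Core_G(M)\cong_G V$ with $C_G(V)=A$, verify that $N=\Core_G(M)$ satisfies the defining conditions of $R_G(V)$, and conclude $R_G(V)\leq N\leq M$, contradicting $\langle x_1,\dots,x_u,R_G(V)\rangle=G$ --- is precisely the standard proof of the cited result. The one point worth making explicit is that ``smallest'' in the definition of $R_G(V)$ means the \emph{minimum} normal subgroup with the stated property (equivalently, the intersection of all such normal subgroups, which itself has the property); this fact, which licenses the inference $R_G(V)\leq N$, is part of Gasch\"utz's crown theory rather than a formal consequence of the wording, but granting it your verification is complete.
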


\begin{lemma}{\cite[Lemma 1.3.6]{classes}}\label{corona}
	Let $G$ be a finite soluble group with trivial Frattini subgroup. There exists
	a crown $C/R$ and a non-trivial normal subgroup $U$ of $G$ such that $C=R\times U.$
\end{lemma}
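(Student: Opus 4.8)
The plan is to produce the required crown from the socle. First I would record the structural consequences of $\frat(G)=1$: since $G$ is soluble and nontrivial, $\soc(G)\neq 1$ is abelian, and every minimal normal subgroup $N$ of $G$ is elementary abelian and, not lying in $\frat(G)=1$, is complemented in $G$; hence $N$ is $G$-isomorphic to some $V\in\mathcal{V}_G$. I would then fix a $V\in\mathcal{V}_G$ that actually occurs in $\soc(G)$ and set $U=\soc_V(G)$, the product of all minimal normal subgroups of $G$ that are $G$-isomorphic to $V$; this is a nontrivial normal subgroup of $G$, and since $V$ is abelian $U\le C_G(V)$. Writing $C=C_G(V)$ and $R=R_G(V)$, the whole lemma reduces to the single claim
\[
C=R\times U .
\]

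Before attacking that claim I would collect two cheap facts. Because $C$ acts trivially on $V$ it acts trivially on $U\cong_G V^{m}$ (where $m$ is the multiplicity of $V$ in the socle), so $U\le \z(C)$; in particular any $G$-invariant complement to $R$ inside $C$, once found, is automatically normal in $G$. Next, setting $N'=\prod_{W\neq V}\soc_W(G)$, I would note $N'\le C$ (the socle is abelian) and $N'\le R$: indeed $C/R\cong_G V^{\delta_G(V)}$ has only $V$ as a constituent, so the induced $G$-homomorphism from $N'$ to $C/R$ vanishes because $N'$ is a sum of irreducibles not isomorphic to $V$. By the modular law this gives $R\cap\soc(G)=N'\oplus(R\cap U)$, so the claim $C=R\times U$ splits into the two statements $R\cap U=1$ and $C=RU$.

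The hard part will be these two statements, and this is exactly where $\frat(G)=1$ must be used decisively; both amount to the assertion that, under $\frat(G)=1$, the $V$-crown is realized inside the socle, equivalently $\delta_G(V)=m$. For $C=RU$ I would pass to $\overline G=G/R$, where $C/R$ is the $V$-homogeneous part of $\soc(\overline G)$ and is complemented; the image $UR/R$ is a $V$-homogeneous normal subgroup of $\overline G$, hence contained in $C/R$, and I would show it fills $C/R$ by proving that every complemented chief factor of $G$ isomorphic to $V$ is accounted for by a copy of $V$ in $\soc(G)$, so that $\delta_G(V)=m$. For $R\cap U=1$ I would argue by minimality of $R_G(V)$: if $R\cap U\neq 1$ it contains a minimal normal subgroup $M\cong_G V$ with $M\le \z(C)$ and, by $\frat(G)=1$, complemented in $G$; peeling off a $G$-invariant copy of $M$ from $R$ should produce a normal subgroup $\widetilde R<R$ with $C/\widetilde R$ still $V$-homogeneous and complemented in $G/\widetilde R$, contradicting the minimality of $R$. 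The genuine obstacle is precisely this splitting/minimality step: showing that such an $M$ admits a $G$-invariant complement in $R$ and that the resulting quotient $C/\widetilde R$ remains $V$-homogeneous and complemented. This is the part that really requires the crown machinery together with $\frat(G)=1$; once it is in place both $R\cap U=1$ and $\delta_G(V)=m$ follow and yield $C=R\times U$.
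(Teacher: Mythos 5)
The paper offers no proof of this lemma: it is quoted verbatim from \cite[Lemma 1.3.6]{classes}, so there is no in-paper argument to compare yours against. Judged on its own, your proposal has a genuine gap, and the statement you reduce the lemma to is in fact false. You fix an \emph{arbitrary} $V\in\mathcal V_G$ occurring in $\soc(G)$, put $C=C_G(V)$, $R=R_G(V)$, $U=\soc_V(G)$, and claim the lemma reduces to $C=R\times U$, equivalently to $\delta_G(V)=m$ where $m$ is the multiplicity of $V$ in the socle. This fails for bad choices of $V$: take $G=C_2\times\sym(3)$, which is soluble with $\frat(G)=1$, and let $V$ be the trivial $G$-module of order $2$. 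Then $V$ occurs in $\soc(G)=Z(G)\times G'$ with multiplicity $m=1$, but $\delta_G(V)=2$, since both $\soc(G)/G'$ and $G/\soc(G)$ are complemented chief factors $G$-isomorphic to $V$. Here $C_G(V)=G$, $R_G(V)=G'\cong C_3$, $U=Z(G)\cong C_2$, and $R\times U$ has order $6$ while $C$ has order $12$. So $C=RU$ is simply wrong for this $V$, and the step you describe as the heart of the argument, namely ``every complemented chief factor of $G$ isomorphic to $V$ is accounted for by a copy of $V$ in $\soc(G)$'', cannot be carried out. (By contrast, the half $R\cap U=1$ really is automatic when $\frat(G)=1$: a minimal normal subgroup $A\cong_G V$ is complemented by a maximal subgroup $M$, one checks $R_G(V)\le \core_G(M)$ while $A\cap\core_G(M)=1$, and any nontrivial $G$-submodule of $U\cap R$ would contain such an $A$.)

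The point you are missing is that the lemma is an existence statement: one must \emph{choose} the crown, i.e.\ exhibit some $V$ for which $\delta_G(V)$ equals the socle multiplicity of $V$, and proving that such a $V$ exists is precisely the nontrivial content (in the example above $V\cong_G G'$ works, the trivial module does not). Your sketch supplies no selection criterion, and the step you yourself flag as ``the genuine obstacle'' is left entirely unproved; as set up, it is not merely unproved but unprovable, because for an arbitrary $V$ in the socle complemented chief factors isomorphic to $V$ can occur above the socle. A correct argument must first make a global choice of $V$ (for instance via a minimal normal subgroup whose centralizer is suitably minimal, as is done in the cited reference), after which the local computations you outline — the decomposition $R\cap\soc(G)=N'\oplus(R\cap U)$, the vanishing of maps from non-$V$ components into $C/R$, and $R\cap U=1$ — do apply.
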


\begin{lemma}{\cite[Proposition 11]{crowns}}
	\label{sotto} 
	Assume that $G$ is a finite soluble group with trivial Frattini subgroup and let $C, R, U$ be as in the statement of Lemma \ref{corona}. If $HU=HR=G,$ then $H=G.$
\end{lemma}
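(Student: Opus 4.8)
The plan is to show that, under the hypotheses, the normal subgroup $R=R_G(V)$ is contained in $H$; since $HR=G$ this immediately yields $H=HR=G$. Throughout I use that $V$ is a complemented chief factor of the soluble group $G$, hence elementary abelian, so $U\cong_G V^{\delta_G(V)}$ is abelian; moreover $R\cap U=1$ together with $R,U\trianglelefteq G$ gives $[R,U]=1$, and combined with $U$ abelian this forces $U\le Z(C)$. By the very definition of $R_G(V)$ the crown $C/R$ has a complement $X/R$ in $G/R$; taking preimages produces a subgroup $X$ with $R\le X$, $X\cap C=R$ and $UX=G$, so that $X$ is a complement of $U$ in $G$.

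First I would record the relevant normality and modular-law facts. Writing $N:=H\cap R$ and $K:=H\cap C$, the inclusion $U\le Z(C)$ shows that $U$ normalises both $N$ and $K$, while $H$ normalises them trivially; since $G=HU$ this gives $N\trianglelefteq G$ and $K\trianglelefteq G$. Applying the modular law to $HR=G\supseteq C$ (with $R\le C$) yields $C=(H\cap C)R=KR$, and likewise $C=KU$. Consequently $C/K$ is abelian and, being isomorphic to a quotient of the semisimple module $U\cong_G V^{\delta_G(V)}$ via $C/K\cong_G U/(H\cap U)$, it is $G$-isomorphic to $V^{\delta'}$ for some $\delta'\ge 0$. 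Using $C=KR$ and $R\cap K=N$ one obtains the $G$-isomorphism $R/N\cong_G C/K\cong_G V^{\delta'}$, and therefore, since $N\le R$, the quotient $C/N=(R/N)\times(UN/N)$ is $G$-isomorphic to $V^{\delta'+\delta_G(V)}$, a direct product of copies of $V$.

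The decisive step is to exhibit a complement of $C/N$ in $G/N$, so that the minimality in the definition of $R_G(V)$ can be invoked. Here I would set $Y:=H\cap X$. Since $X\cap C=R$ one has $Y\cap C=H\cap R=N$, and the modular law applied to $HR=G$ with $R\le X$ gives $X=(H\cap X)R=YR$; hence $YC=(YR)U=XU=G$. Thus $Y/N$ is a complement of $C/N$ in $G/N$. Now $N\trianglelefteq G$ is contained in $C=C_G(V)$, the quotient $C/N$ is $G$-isomorphic to a direct product of copies of $V$, and it is complemented in $G/N$; by the minimality clause in the definition of $R_G(V)$ we conclude $R=R_G(V)\le N=H\cap R$, whence $R\le H$ and $H=HR=G$.

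I expect the main obstacle to be the construction of the complement $Y=H\cap X$ together with the verification that $C/N$ is genuinely $G$-isomorphic to a product of copies of $V$. The former requires combining the two supplementation hypotheses $HR=G$ and $HU=G$ through the complement $X$ of $U$ supplied by the crown, while the latter requires the identification $R/N\cong_G U/(H\cap U)$ to be $G$-equivariant, which is precisely why establishing $K=H\cap C\trianglelefteq G$ (via $U\le Z(C)$) at the outset is essential. Once these are in place the conclusion follows formally from the defining minimality of the crown, and, pleasingly, no exceptional cases arise.
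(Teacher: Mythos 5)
Your proof is correct. Note that the paper itself offers no argument for this lemma --- it is quoted directly from \cite[Proposition 11]{crowns} --- so there is no internal proof to compare against; what you have produced is a valid, self-contained derivation in the soluble setting. Every step checks: $U\le Z(C)$ follows from $C=R\times U$ with $U$ abelian, and this is precisely what makes $N=H\cap R$ and $K=H\cap C$ normal in $G=HU$; the Dedekind computations $C=KR=KU$, $X=YR$ and $YC=G$ are all legitimate; and the isomorphisms $R/N\cong_G C/K\cong_G U/(H\cap U)$ are genuinely $G$-equivariant because the relevant subgroups $K$, $N$ and $H\cap U$ have all been shown normal in $G$ beforehand, exactly as you flag. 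The final appeal to minimality is also sound: $N$ is a normal subgroup of $G$ contained in $C_G(V)$ with $C/N\cong_G V^{\delta+\delta'}$ complemented by $Y/N$ in $G/N$, so the paper's definition of $R_G(V)$ as the \emph{smallest} such normal subgroup forces $R\le N\le H$, whence $H=HR=G$. One remark for perspective: the cited source proves the statement for arbitrary finite groups with trivial Frattini subgroup, where the analogue of $U$ may be a product of nonabelian simple groups and is certainly not central in $C$; your argument uses solubility (hence abelianness of $V$ and of $U$) at the very first step, so it is a genuine simplification that is available only in the soluble case --- which is all that this lemma, as stated here, requires.
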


Now let $V$ be a finite dimensional vector space over a finite field
of prime order. Let $K$ be a $d$-generated linear soluble group acting
irreducibly and faithfully on $V$ and fix a generating $d$-tuple $(k_1,\dots,k_d)$ of $K.$
For a positive integer $u$ we consider
the semidirect product $G_u = V^u \rtimes K$, where $K$ acts in the
same way on each of the $u$ direct factors. We will use the aforementioned properties  of the crowns, in particular Proposition \ref{prouno} and Lemmas \ref{corona} and \ref{sotto}, to essentially reduce the study of the graph $\Gamma^*_{a,b}(G)$ to the particular case when $G\cong G_u.$
Put $F =
\mathrm{End}_{K}(V)$. Let $n$ be the dimension of $V$ over $F$. We may identify $K =
\langle k_1, \dots, k_d \rangle$ with a subgroup of the general linear group $\GL(n,F)$. In this
identification $k_i$ becomes an $n\times n$ matrix $X_i$ with coefficients in $F$; denote by $A_i$ the matrix
$I_n-X_i.$ Let $w_i=(v_{i,1},\dots,v_{i,u})\in V^u.$
Then every $v_{i,j}$ can be
viewed as a $1 \times n$ matrix. Denote the $u \times n$ matrix
with rows $v_{i,1},\dots,v_{i,u}$  by $D_i$. The following result is proved in
\cite[Section 2]{CL4}.

\begin{prop}\label{richiami} The group $G_u=V^u\rtimes K$ can be generated by $d$ elements if and only if $u\leq n(d-1).$ Moreover
	\begin{enumerate}
		\item $\mathrm{rank} \begin{pmatrix}A_1&\dots&A_d\end{pmatrix}=n.$ \item
		$\langle k_1w_1,\dots,k_dw_d \rangle=V^u \rtimes K$ if and only if
		$\mathrm{rank} \begin{pmatrix}A_1&\cdots&A_d\\
		D_1&\cdots&D_d\end{pmatrix} = n+u.$
	\end{enumerate}
\end{prop}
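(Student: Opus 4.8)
The plan is to decide generation of $G_u=V^u\rtimes K$ by the tuple $(k_1w_1,\dots,k_dw_d)$ by analysing the submodule $H\cap V^u$, where $H=\langle k_1w_1,\dots,k_dw_d\rangle$, and to translate the answer into the stated rank condition via the vanishing of a first cohomology group. Part (1) is quickly dealt with: the rows of $\begin{pmatrix}A_1&\cdots&A_d\end{pmatrix}$ are linearly dependent exactly when some nonzero row vector $c\in F^n$ satisfies $cA_i=0$, i.e.\ $cX_i=c$, for all $i$; since the $X_i$ generate the image of $K$ in $\GL(n,F)$, such a $c$ would be a nonzero $K$-fixed functional on $V$. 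As $V$ is a nontrivial irreducible $FK$-module (faithfulness forces $K\neq 1$), its dual has no nonzero fixed point, so no such $c$ exists and the rank equals $n$.

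For part (2) I would first observe that, since $k_1,\dots,k_d$ generate $K$, the subgroup $H$ surjects onto $K=G_u/V^u$, so $H=G_u$ if and only if $H\cap V^u=V^u$; a routine normality argument shows $H\cap V^u$ is a $K$-submodule of $V^u$. Now $V^u$ is a homogeneous semisimple module, namely $u$ copies of the irreducible module $V$ with $\End_K(V)=F$, so its submodule lattice is that of the $F$-subspaces of $F^u$ and its maximal submodules are precisely the kernels of the projections $\lambda_e\colon V^u\to V$, $(x_1,\dots,x_u)\mapsto\sum_j e_jx_j$, for $0\neq e\in F^u$. Hence $H\cap V^u=V^u$ if and only if $H\cap V^u\not\subseteq\ker\lambda_e$ for every $e\neq 0$, equivalently if and only if the image $\overline H_e$ of $H$ in $G_u/\ker\lambda_e\cong V\rtimes K$ is the whole of $V\rtimes K$; here $\overline H_e$ is generated by the $d$ elements $k_i\,\lambda_e(w_i)$.

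This reduces matters to the case $u=1$: to decide when $\langle k_1v_1,\dots,k_dv_d\rangle=V\rtimes K$ with $v_i\in V$. If these elements do not generate then, $V$ being irreducible, the subgroup they generate meets $V$ trivially and so is a complement to $V$ in $V\rtimes K$. The crucial point is that $H^1(K,V)=0$, whence all complements are conjugate, by an element of $V$, to $K$; the generators then lie in a common $K^v=\{(1-k)v\cdot k : k\in K\}$, forcing $v_i=(1-k_i)v=vA_i$ for a single $v$, that is, $(v_1,\dots,v_d)$ lies in the row space of $\begin{pmatrix}A_1&\cdots&A_d\end{pmatrix}$; the converse is immediate. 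I would establish $H^1(K,V)=0$ by choosing a minimal normal subgroup $A$ of the soluble group $K$; write $p$ for the characteristic of the field. Faithful irreducibility forces $O_p(K)=1$: a nontrivial normal $p$-subgroup would act trivially on $V$, since its fixed points form a nonzero $K$-submodule, hence all of $V$, contradicting faithfulness. Thus $A$ is elementary abelian of order prime to $p$, and since $A\neq 1$ gives $V^A=0$, the inflation--restriction sequence yields $H^1(K,V)\hookrightarrow H^1(A,V)^{K/A}=0$ by coprimality.

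It remains to assemble the criterion. Combining the reduction with the $u=1$ case, $H=G_u$ if and only if for every $e\neq 0$ the tuple $(\lambda_e(w_1),\dots,\lambda_e(w_d))=e\begin{pmatrix}D_1&\cdots&D_d\end{pmatrix}$ avoids the row space of $\begin{pmatrix}A_1&\cdots&A_d\end{pmatrix}$; since by part (1) a relation $cA_i+eD_i=0$ with $e=0$ forces $c=0$, this says exactly that the $n+u$ rows of $\begin{pmatrix}A_1&\cdots&A_d\\ D_1&\cdots&D_d\end{pmatrix}$ are independent, i.e.\ that its rank is $n+u$. The generation bound is then immediate: rank $n+u$ requires $n+u\le nd$, giving $u\le n(d-1)$, while conversely, when $u\le n(d-1)$, one may choose the rows of $\begin{pmatrix}D_1&\cdots&D_d\end{pmatrix}$ so as to extend the rank-$n$ row space of the top block to full rank $n+u$. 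I expect the cohomological vanishing $H^1(K,V)=0$ to be the main obstacle, since it is exactly what guarantees that the matrix built from the $A_i$, which only records conjugates of $K$ (the principal derivations), nonetheless captures every obstruction to generation.
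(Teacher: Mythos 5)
Your argument is correct, but note that the paper does not actually prove Proposition \ref{richiami}: it is quoted verbatim from \cite[Section 2]{CL4}, so there is no internal proof to compare against. What you have written is essentially the standard argument behind that citation (and behind the related Lemma \ref{che} later in the paper): reduce to the isotypic quotients $V\rtimes K$, identify the non-generating lifts of a generating tuple of $K$ with complements to $V$, and use the vanishing of $H^1(K,V)$ for a soluble group acting faithfully and irreducibly to see that all complements are $V$-conjugates of $K$, so that the obstruction space is exactly the row space of $\begin{pmatrix}A_1&\cdots&A_d\end{pmatrix}$. Your verification of $H^1(K,V)=0$ via $O_p(K)=1$ and inflation--restriction is the standard one and is fine. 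One small slip: the parenthetical ``faithfulness forces $K\neq 1$'' is not a valid inference (the trivial group acts faithfully on everything); rather, $K\neq 1$ must be assumed outright --- as it is explicitly in Proposition \ref{corfour} --- since for $K=1$ part (1) and the bound $u\leq n(d-1)$ are both false. Given that standing assumption, faithfulness does give that $V$ is a nontrivial irreducible module, which is what your rank computation in part (1) actually needs. Finally, for the ``only if'' half of the generation bound you implicitly apply criterion (2) to an arbitrary generating $d$-tuple of $G_u$, whose $K$-components need not be the fixed $k_1,\dots,k_d$; this is harmless because nothing in your argument depends on the particular choice of generating tuple of $K$, but it is worth saying.
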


The next result may seem rather technical, but it provides crucial information on the graph $\Gamma_{a,b}(G)$ when $G\cong V^\delta\rtimes K.$

\begin{prop}\label{corfour}
	%
	
	Let $K$ be a non-trivial d-generator linear soluble group acting irreducibly and faithfully on $V$ and consider the semidirect product $G=V^\delta \rtimes K$ with $\delta\leq n(d-1)$, where $n=\dim_{\End_G(V)}V.$ Let $a$ and $b$ be non-negative integers such that $a+b=d$,  $s\in \{a,b\}$ and $t=d-s$.  Assume that   $(t, |F|)\neq (1,2)$ and there exist, for $i\in\{1,2\}$, $x_{i1},\ldots,x_{is}$ and $y_1,\ldots,y_t$ in $K$, and $w_{i1},\ldots,w_{is}$ in $V^\delta$ such that
	\begin{enumerate}
		\item $(x_{11}w_{11},\ldots, x_{1s}w_{1s})$ and $(x_{21}w_{21},\ldots, x_{2s}w_{2s})$ are non-isolated vertices belonging to $V_s$ in  the graph  $\Gamma^*_{a,b}(G)$,
		\item $\langle x_{11},\ldots, x_{1s}, y_1,\ldots,y_t\rangle=\langle y_1,\ldots,y_t, x_{21},\ldots, x_{2s} \rangle=K.$
	\end{enumerate}
	Then there exist $w_1, \ldots,w_t \in  V^\delta$ with
	$$\langle x_{11},\ldots, x_{1s},y_1w_1,\ldots,y_tw_t  \rangle=\langle y_1w_1,\ldots,y_tw_t, x_{21},\ldots, x_{2s}\rangle=G.$$
\end{prop}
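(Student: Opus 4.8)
The plan is to convert both generation requirements into the nonsingularity of two block matrices over $F=\End_G(V)$ and then to apply Lemma~\ref{que} directly, with the free data being exactly the tuples $w_1,\dots,w_t$ we must produce.

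First I would reduce to the extremal case $\delta=n(d-1)$. Indeed $G=V^\delta\rtimes K$ is a quotient of $\ol G=V^{n(d-1)}\rtimes K$ by a normal subgroup isomorphic to $V^{n(d-1)-\delta}$, and $\ol G$ is $d$-generated by Proposition~\ref{richiami}. A non-isolated vertex $(x_{i1}w_{i1},\dots,x_{is}w_{is})$ of $\Gamma^*_{a,b}(G)$ is, by definition, the $s$-part of some generating $d$-tuple of $G$; by Gasch\"utz's theorem this $d$-tuple lifts to a generating $d$-tuple of $\ol G$ with the same image in $G$, hence to a non-isolated lift whose $K$-part is still $(x_{i1},\dots,x_{is})$. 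Since hypothesis (2) only involves $K$, the whole hypothesis survives in $\ol G$; and a solution $w_1,\dots,w_t\in V^{n(d-1)}$ upstairs projects onto a solution in $V^\delta$ because the image of a generating tuple is a generating tuple. So from now on assume $\delta=n(d-1)$, i.e.\ $n+\delta=nd$.

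Next, following the identification of $K$ with a subgroup of $\GL(n,F)$, write $A_{ij}=I_n-X_{ij}$ and $B_j=I_n-Y_j$ for the matrices attached to $x_{ij}$ and $y_j$, collect them into $\mathbf A_i=(A_{i1}\ \cdots\ A_{is})$ and $\mathbf B=(B_1\ \cdots\ B_t)$, let $\mathbf D_i\in M_{\delta\times ns}(F)$ record the $V^\delta$-parts $w_{i1},\dots,w_{is}$ of the vertices in (1), and let $\mathbf E\in M_{\delta\times nt}(F)$ be the unknown matrix recording $w_1,\dots,w_t$. By Proposition~\ref{richiami}(2) the two desired generating conditions say precisely that
\[
\det\begin{pmatrix}\mathbf B&\mathbf A_1\\ \mathbf E&\mathbf D_1\end{pmatrix}\neq 0
\quad\text{and}\quad
\det\begin{pmatrix}\mathbf B&\mathbf A_2\\ \mathbf E&\mathbf D_2\end{pmatrix}\neq 0
\]
(after the harmless reordering of columns that places the $t$-part first), and here $\mathbf E$ is the single free block shared by both. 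This is exactly the template of Lemma~\ref{que}, with $\hat B=\mathbf B$, $\hat A_i=\mathbf A_i$, $\hat D_i=\mathbf D_i$, $\hat C=\mathbf E$, and with the parameters $\hat n=n$, $\hat u=ns$, $\hat v=nt$, $\hat t=\delta$ satisfying $\hat n\le\min\{\hat u,\hat v\}$ and $\hat t+\hat n=\hat u+\hat v=nd$.

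It remains to check the two rank hypotheses of Lemma~\ref{que} and to locate its exceptional case. Hypothesis (2), via Proposition~\ref{richiami}(1) applied to the generating $d$-tuple of $K$, gives $\ran(\mathbf B\ \mathbf A_i)=n=\hat n$. For the other hypothesis, non-isolation of $(x_{i1}w_{i1},\dots,x_{is}w_{is})$ means, again by Proposition~\ref{richiami}(2), that its $s$-block $\begin{pmatrix}\mathbf A_i\\ \mathbf D_i\end{pmatrix}$ consists of $ns$ of the columns of a nonsingular $nd\times nd$ matrix, so these columns are independent and $\ran\begin{pmatrix}\mathbf A_i\\ \mathbf D_i\end{pmatrix}=ns=\hat u$. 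Thus Lemma~\ref{que} applies and yields the required $\mathbf E$, hence $w_1,\dots,w_t$, unless we are in its exceptional case $|F|=2$, $\hat n=\hat v$. But $\hat n=\hat v$ means $n=nt$, i.e.\ $t=1$; so the exceptional case is contained in $(t,|F|)=(1,2)$, which is excluded by hypothesis. The main work, and the only genuinely non-formal step, is the extraction of the exact rank $ns$ of the $s$-block from mere non-isolation --- which is what forces the reduction to $\delta=n(d-1)$, where ``non-isolated'' upgrades to ``columns of a nonsingular matrix'' --- together with the careful bookkeeping needed to land precisely on the hypotheses and the exceptional case of Lemma~\ref{que}.
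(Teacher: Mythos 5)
Your proof is correct and follows essentially the same route as the paper's: reduce to $\delta=n(d-1)$ via the epimorphism from $V^{n(d-1)}\rtimes K$, translate both generation conditions into determinant conditions via Proposition~\ref{richiami}, verify the two rank hypotheses of Lemma~\ref{que} from hypotheses (1) and (2), and observe that the exceptional case of that lemma forces $t=1$ and $|F|=2$, which is excluded. Your explicit Gasch\"utz lifting of the non-isolated vertices is a welcome elaboration of a step the paper leaves implicit.
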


\begin{proof}
	Since $V^\delta\rtimes K$ is an epimorphic image of $V^{n(d-1)}\rtimes K$, it suffices to prove the statement in the particular case where $G=V^{n(d-1)}\rtimes K.$
	We may identify the elements $x_{i1},\ldots,x_{is},y_1,\ldots,y_t$ with matrices  $ X_{i1},\ldots,X_{is},Y_1,\ldots,Y_t\in \GL(n,F)$, respectively, where $F=\End_G(V)$ and  $w_{i1},\ldots,w_{is},w_1, \ldots,w_t\in V^{n(d-1)}$ with  matrices $D_{i1},\ldots,D_{is}$ and $C_1, \ldots,C_t$ in
	$M_{n(d-1)\times n}(F)$, respectively. We now apply Proposition \ref{richiami}. Let
$$\begin{aligned}A_{ij}&=I_n-X_{ij}, \quad \text{for}\, i\in\{1,2\}\, \text{and}\, j\in\{1,\ldots, s\},
\\B_k&=I_n-Y_k, \quad \text{for}\, k\in\{1,\ldots, t\}.\end{aligned}$$
	Conditions (1) and (2)  imply that 	$$\ran (A_{11} \ \ldots \ A_{1s}\ B_1 \ \ldots \ B_t )=\ran (A_{21} \ \ldots  \ A_{2s}\ B_1 \ \ldots  \ B_t)=n$$ and
	$$\ran \begin{pmatrix}A_{11} \ \ldots \ A_{1s}\\ D_{11}\ \ldots \ D_{1s} \end{pmatrix}=\ran \begin{pmatrix}A_{21} \ \ldots  \ A_{2s} \\ D_{21}\ \ldots \ D_{2s}\end{pmatrix}=ns.$$ Moreover our statement is equivalent to saying that there exist  $t$ matrices $C_1,\ldots,C_t \in M_{n(d-1)\times n}(F)$
	with
	$$\det\begin{pmatrix}A_{11} \ \ldots \ A_{1s}& B_1\ \ldots \ B_t\\ D_{11} \ \ldots \ D_{1s}& C_1\ \ldots \ C_t\end{pmatrix}\neq 0,\quad \det\begin{pmatrix}B_1\ \ldots \ B_t& A_{21} \ \ldots \ A_{2s} \\ C_1\ \ldots \ C_t& D_{21} \ \ldots \ D_{2s}\end{pmatrix}\neq 0. 	
	$$
	Put, for $i\in \{1,2\}$ 
$$\begin{aligned}
	A_i&= (A_{i1} \ \ldots \ A_{is})\in M_{n\times ns}(F),\\D_i&= (D_{i1} \ \ldots \ D_{is})\in M_{n(d-1)\times ns}(F),\\
	B&=(B_1 \ \ldots \ B_t)\in M_{n\times nt}(F).
\end{aligned}$$
	The existence of $C=(C_1 \ \ldots \ C_t)\in M_{n(d-1)\times nt}(F)$ such that 
	$$ \det\begin{pmatrix}A_{1}& B\\ D_1& C\end{pmatrix}\neq 0,\ \det\begin{pmatrix}B& A_2 \\ C& D_2\end{pmatrix}\neq 0$$
	is ensured by Lemma \ref{que}.
	Notice that the fact that $K$ is a non-trivial subgroup of $\GL(n,F)$ implies that
	$n\geq 2$ if $|F|=2$. Moreover if $|F|=2$ and  $\ran B=\ran(B_1 \ \ldots \ B_t)=nt$, we necessarily have $t=d-s=1$.
\end{proof}

We are now ready to prove the main result of this section.

\begin{thm}\label{abd}
	Let $G$ be a finite soluble group,  $a$ and $b$ be non-negative integers, $s\in \{a,b\}$ and $t=a+b-s$. Assume that either $t\neq 1$ or
	$G$ has the following property: if $A$ is
	a non-trivial irreducible $G$-module  $G$-isomorphic to a complemented chief factor of $G$, then $|\End_G(A)|>2$ (this holds in particular when the derived subgroup of $G$ is either nilpotent or of odd order).
	Then  in the graph $ \Gamma^*_{a,b}(G)$ given  any two vertices $x_1,x_2\in V_s$,  there exists $y\in V_{t}$ which is adjacent to both $x_1$ and $x_2$. 
\end{thm}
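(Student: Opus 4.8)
The plan is to reduce the theorem to the crown-based, semidirect-product situation already handled by Proposition~\ref{corfour}, using induction on the order of $G$ together with the machinery of crowns. I would argue by induction on $|G|$, the base case being trivial. Let $x_1=(x_{11},\dots,x_{1s})$ and $x_2=(x_{21},\dots,x_{2s})$ be the two given non-isolated vertices in $V_s$. Being non-isolated means each $x_i$ extends to a generating $(a+b)$-tuple of $G$, so there exist $t$-tuples completing each of them. My first move is to pass to $G/\frat(G)$: since the property on $\End_G(A)$ only involves complemented chief factors (which are unaffected by the Frattini subgroup), and since by Lemma~\ref{norsbg} / Gasch\"utz's theorem generation is controlled modulo $\frat(G)$, I may assume $\frat(G)=1$. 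This lets me invoke Lemma~\ref{corona} to produce a crown $C/R$ and a non-trivial normal subgroup $U$ with $C=R\times U$.

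Next I would set up the inductive step around the quotient $G/U$ and the crown. Applying the inductive hypothesis to $\bar G=G/U$ (which has smaller order, and still satisfies the hypothesis on endomorphism rings since its complemented chief factors are among those of $G$), I obtain a $t$-tuple $\bar y=(\bar y_1,\dots,\bar y_t)$ in $G/U$ adjacent in $\Gamma^*_{a,b}(G/U)$ to both images $\bar x_1$ and $\bar x_2$; that is, $\langle \bar x_{1j},\bar y_k\rangle=\langle \bar x_{2j},\bar y_k\rangle=\bar G$. Lifting $\bar y$ to any $y=(y_1,\dots,y_t)$ in $G$, I get $\langle x_{1j},y_k,U\rangle=\langle x_{2j},y_k,U\rangle=G$. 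The remaining task is to adjust the $y_k$ by elements of $U$ so that the lifted tuples generate all of $G$, not merely $G$ modulo $U$. Here I record that $U$, being a minimal-normal complement summand of the crown, is $G$-isomorphic to a direct sum $V^\delta$ of copies of a single irreducible module $V$, and the action of $G$ on $U$ factors through a linear soluble group $K$ acting irreducibly and faithfully on $V$. This is exactly the configuration $V^\delta\rtimes K$ of Proposition~\ref{corfour}.

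The heart of the argument is then the reduction to Proposition~\ref{corfour}. With $s$, $t$, $V$, $K$ as above, the hypothesis $\langle x_{1j},y_k,U\rangle=G$ says precisely that the images in $K$ of the $x_{1j}$ together with the images of the $y_k$ generate $K$, and likewise for the index-$2$ data; these are conditions (1) and (2) of that proposition. The parity restriction $(t,|F|)\neq(1,2)$ is guaranteed by our standing hypothesis: when $t=1$ we have assumed $|\End_G(V)|=|F|>2$ for every relevant $V$, which rules out the sole exceptional case of Lemma~\ref{que}. Proposition~\ref{corfour} then supplies elements $u_1,\dots,u_t\in U$ such that $\langle x_{1j},y_ku_k\rangle=\langle x_{2j},y_ku_k\rangle=G$, i.e. the adjusted tuple $y'=(y_1u_1,\dots,y_tu_t)$ is a common neighbour of $x_1$ and $x_2$ in $V_t$.

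The main obstacle, and the reason the hypothesis on endomorphism rings is needed, is exactly the correction step over the crown: lifting a common neighbour from $G/U$ does not automatically produce a common neighbour in $G$, because one must \emph{simultaneously} complete two distinct tuples to generating sets, and the linear-algebra obstruction to doing so is the content of Lemma~\ref{que}, whose exceptional case $(|F|=2,\ t=1)$ is precisely what the hypothesis excludes. A subtle point I would need to verify carefully is that when $\delta>n(d-1)$ the semidirect product cannot be $d$-generated at all, so the relevant crown automatically satisfies $\delta\le n(d-1)$ for vertices that are genuinely non-isolated; this is where non-isolation of $x_1,x_2$ is used, via Proposition~\ref{richiami}. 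Finally I would use Proposition~\ref{prouno} to confirm that generating $G$ modulo every $R_G(V)$ suffices, ensuring the single-crown correction propagates to full generation of $G$.
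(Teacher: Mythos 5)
Your overall strategy coincides with the paper's: induction on $|G|$, reduction modulo the Frattini subgroup, the crown decomposition $C=R\times U$ from Lemma~\ref{corona}, lifting a common neighbour from $G/U$ by induction, and correcting it over the crown via the linear algebra of Proposition~\ref{corfour}. However, there are two concrete gaps. First, you treat only the case where the module $A$ underlying the crown is non-central, asserting that ``the action of $G$ on $U$ factors through a linear soluble group $K$ acting irreducibly and faithfully on $V$'' and invoking Proposition~\ref{corfour}. That proposition requires $K$ to be \emph{non-trivial}, so it says nothing when $A$ is a trivial $G$-module and $\bar G=G/R\cong (C_p)^\delta$. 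This case cannot be absorbed into the same machinery: the theorem's hypothesis on $|\End_G(A)|$ is imposed only for \emph{non-trivial} modules, so central complemented chief factors with $|F|=2$ are permitted, and the exclusion $(t,|F|)\neq(1,2)$ in Proposition~\ref{corfour} would be violated. The paper handles the central case separately with Lemma~\ref{lemmanil}, an elementary statement about spanning a vector space over an arbitrary finite field; your argument needs this branch added.

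Second, your gluing step is misattributed and, as stated, incomplete. Proposition~\ref{corfour} is a statement about $V^\delta\rtimes K\cong G/R$, so it produces corrections $u_1,\dots,u_t\in U$ giving generation of $G$ \emph{modulo $R$}, not of $G$ itself; combined with generation modulo $U$ (from the inductive step in $G/U$), the right tool is Lemma~\ref{sotto}: since $C=R\times U$, the two conditions $HU=G$ and $HR=G$ force $H=G$. You instead appeal to Proposition~\ref{prouno}, which would require verifying $\langle\,\cdot\,\rangle R_G(W)=G$ for \emph{every} crown $W\in\mathcal V_G$, and it is not immediate that generation modulo $U$ yields this for crowns $W$ with $U\not\leq R_G(W)$. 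The fix is simply to replace the appeal to Proposition~\ref{prouno} by Lemma~\ref{sotto}. With these two repairs (the central-module branch via Lemma~\ref{lemmanil}, and the correct gluing lemma) your argument becomes the paper's proof.
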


\begin{proof}
	We may assume $d:=a+b\geq d(G)$. We argue by induction on the order of $G$. Choose two  vertices 
	$x_1=(x_{11},\ldots,x_{1s})$ and $x_2=(x_{21},\ldots,x_{2s})$ in $V_s$. Let $F=\frat(G)$ be the Frattini subgroup of $G$. Clearly $x_1F=(x_{11}F,\ldots,x_{1s}F)$ and $x_2F=(x_{21}F,\ldots,x_{2s}F)$ are vertices of the graph  $ \Gamma^*_{a,b}(G/F).$ If
	$F\neq 1,$ then, by induction, there exists a  $t$-tuple $yF=(y_1F,\ldots,y_tF)$ which is simultaneously adjacent to  $x_1F$ and $x_2F$  in the graph $ \Gamma^*_{a,b}(G/F)$.
		This implies that  $G=\langle x_{11},\ldots,x_{1s} , y_1,\ldots,y_t \rangle F= \langle x_{21},\ldots,x_{2s} , y_1,\ldots,y_t \rangle F
	=\langle x_{11},\ldots,x_{1s} , y_1,\ldots,y_t\rangle=\langle x_{21},\ldots,x_{2s} , y_1,\ldots,y_t\rangle$, hence $y=(y_1,\ldots,y_t)$ is a $t$-tuple adjacent to both $x_1$ and $x_2$ in $ \Gamma^*_{a,b}(G)$.  Therefore we may assume $F=1.$ In this case, by Lemma \ref{corona}, there exist a crown $C/R$ of $G$ and
	a normal subgroup $U$ of $G$ such that $C=R\times U.$
	We have $R=R_G(A)$ where $A$ is an irreducible $G$-module  and $U\cong_G A^\delta$ for $\delta=\delta_G(A).$ 
	By induction,  in the graph $ \Gamma^*_{a,b}(G/U)$, there exists a $t$-tuple $yU=(y_1U,\ldots,y_tU)$ which is adjacent to  both   $x_1U=(x_{11}U,\ldots,x_{1s}U)$ and $x_2U=(x_{21}U,\ldots,x_{2s}U)$. In particular  we have
	\begin{equation}\label{modu}
	\langle x_{11},\ldots,x_{1s} , y_1,\ldots,y_t \rangle U=\langle x_{21},\ldots,x_{2s} , y_1,\ldots,y_t \rangle U=G.
	\end{equation}
	
	We work in the factor group $\bar G=G/R.$ We have $\bar C=C/R=UR/R\cong U\cong A^\delta$
	and either $A\cong C_p$ is a trivial $G$-module and $\bar G\cong (C_p)^\delta$  or $\bar G= \bar U \rtimes \bar H \cong A^\delta \rtimes K$
	where $K \cong \bar H$ acts in the same way on each of the $\delta$ factors of $A^\delta$ and this action
	is faithful and irreducible. Since $\bar G$ is $d$-generated, we have $\delta\leq d$ if $A$ is a trivial $G$-module,  $\delta\leq n (d-1)$, where $n=\dim_{{\End_G(A)}}A$
	otherwise.

	By Lemma \ref{lemmanil} in the first case  and by Proposition \ref{corfour} in the second case, there exist $u_1,\ldots,u_t\in U$ with
	$$\langle \bar x_{11},\ldots, \bar x_{1s}, \bar y_1\bar u_1, \ldots ,\bar y_t\bar u_t\rangle= \langle \bar y_1\bar u_1, \ldots, \bar y_t\bar u_t, \bar x_{21},\ldots, \bar x_{2s} \rangle=\bar G,$$ i.e.
	\begin{equation}\label{modr}
	\langle  x_{11},\ldots,  x_{1s},  y_1u_1, \ldots,  y_tu_t\rangle R= \langle  y_1u_1, \ldots,  y_tu_t,  x_{21},\ldots,  x_{2s} \rangle R=G.
	\end{equation}
	In view of Lemma \ref{sotto}, from (\ref{modu}) and (\ref{modr}), we obtain that  $$\langle  x_{11},\ldots,  x_{1s},  y_1u_1, \ldots,  y_tu_t\rangle= \langle  y_1u_1, \ldots,  y_tu_t,  x_{21},\ldots,  x_{2s} \rangle =G. \qedhere$$ 
	\end{proof}
Now from Theorem \ref{abd} and \cite[Theorem 1]{diam} we  easily deduce the following result. 
\begin{cor}
	\label{diametro}
Let  $G$ be a finite soluble group and let $a$ and $b$ be non-negative integers. Then $$\diam(\Gamma^*_{a,b}(G))\leq 4.$$ Moreover
\begin{enumerate}
\item Assume $a=b$. If either $G$  has the property that $|\End_G(V)|>2$ for every non-trivial irreducible $G$-module $V$ which is $G$-isomorphic to a complemented chief factor of $G$ or $a\neq 1$, then $\diam(\Gamma^*_{a,a}(G))\leq 2$. Otherwise $\diam(\Gamma^*_{a,a}(G))\leq 3$.
\item Assume $a<b.$ If either $G$  has the property that $|\End_G(V)|>2$ for every non-trivial irreducible $G$-module $V$ which is $G$-isomorphic to a complemented chief factor of $G$ or $a\neq 1$, then $\diam(\Gamma^*_{a,b}(G))\leq 3$.

\end{enumerate}
\end{cor}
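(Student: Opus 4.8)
The plan is to derive everything from Theorem \ref{abd}, which supplies a common neighbour for any two vertices lying in a single part $V_s$, together with the connectivity guaranteed by Corollary \ref{c15} and a single external input, namely \cite[Theorem 1]{diam}, for the one genuinely exceptional configuration. First I would dispose of the degenerate situations: if $a+b<d(G)$ the graph has no edges and there is nothing to prove, while if $a=0$ the graph $\Gamma^*_{0,b}(G)$ is the star $K_{1,\phi_G(b)}$ and has diameter at most $2$. Outside these cases we may assume $a+b\ge d(G)$, so that $\Gamma^*_{a,b}(G)$ is connected by Corollary \ref{c15} and its diameter is a well-defined finite number. The key reformulation I would record at the outset is the following consequence of Theorem \ref{abd}: whenever $s\in\{a,b\}$ and $t=a+b-s$ satisfy the hypothesis of that theorem, that is, $t\neq 1$ or the endomorphism condition holds, any two vertices of $V_s$ admit a common neighbour in $V_t$ and hence lie at distance at most $2$ in $\Gamma^*_{a,b}(G)$.

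Next I would treat the case $a=b$, where $\Gamma^*_{a,a}(G)$ is not bipartite and all vertices lie in the single part $V_a$. Taking $s=t=a$, the hypothesis of Theorem \ref{abd} is exactly that $a\neq 1$ or the endomorphism condition holds, so under that assumption any two vertices have a common neighbour and $\diam(\Gamma^*_{a,a}(G))\le 2$, giving the first assertion of part (1). In the remaining case $a=1$ with the endomorphism condition failing, we have $a=b=1$ and $d(G)\le 2$, so $\Gamma^*_{1,1}(G)$ is precisely the pruned generating graph $\Gamma^*(G)$, and \cite[Theorem 1]{diam} yields $\diam\le 3$.

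For $a<b$ (and $a\ge 1$, the case $a=0$ being already settled) the graph is bipartite with parts $V_a$ and $V_b$, and neighbours always lie in the opposite part. I would first note, \emph{unconditionally}, that two vertices of $V_a$ are at distance at most $2$: here $s=a$, $t=b$, and $t=b\ge 2\neq 1$, so no endomorphism hypothesis is needed. For the conditional bound of part (2), the assumption that $a\neq 1$ or the endomorphism condition holds is exactly what lets us apply Theorem \ref{abd} with $s=b$, $t=a$, so that two vertices of $V_b$ are likewise at distance at most $2$; since every vertex of $\Gamma^*$ is non-isolated and so has a neighbour in the opposite part, a vertex of $V_a$ and a vertex of $V_b$ are joined by a path of length at most $3$, whence $\diam\le 3$. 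For the unconditional bound $\le 4$, the $V_a$--$V_a$ estimate of distance $\le 2$ still holds, so two vertices of $V_b$ are joined through their respective $V_a$-neighbours by a path of length at most $1+2+1=4$, while a mixed pair is at distance at most $3$ exactly as before; combined with the fact that the case $a=b$ always gives at most $3$, this shows $\diam(\Gamma^*_{a,b}(G))\le 4$ in every case.

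The only genuinely non-formal ingredient is the external bound \cite[Theorem 1]{diam}, invoked for the single exceptional configuration $a=b=1$; everything else is a bookkeeping exercise whose main subtlety is precisely tracking when the clause $t\neq 1$ of Theorem \ref{abd} holds automatically (it does for the $V_a$--$V_a$ comparison once $a<b$ and $a\ge 1$) versus when one must fall back on the endomorphism hypothesis (for the $V_b$--$V_b$ comparison, and for the comparison in the case $a=b$). The degenerate value $a=0$, where one part is a single vertex, and the empty graph $a+b<d(G)$ must be peeled off at the start so that the phrase \emph{common neighbour} is never applied to a one-vertex part.
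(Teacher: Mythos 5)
Your proposal is correct and is exactly the deduction the paper intends: the paper states only that the corollary follows ``easily'' from Theorem \ref{abd} and \cite[Theorem 1]{diam}, and your case analysis (common neighbours within one part via Theorem \ref{abd}, the unconditional $t=b\neq 1$ observation for $V_a$--$V_a$ when $a<b$, and the fallback to \cite[Theorem 1]{diam} only for $a=b=1$ without the endomorphism hypothesis) is the standard way to fill in that deduction. No gaps; the peeling off of $a=0$ and $a+b<d(G)$ is appropriate bookkeeping.
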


In the remaining part of this section we want to prove that Theorem \ref{abd} does not remain true, when $t=1$, if we drop out the assumption that $G$ has the  property that  $|\End_G(A)|>2$ whenever $A$ is
a non-trivial irreducible $G$-module $G$-isomorphic to a complemented chief factor of $G$. Indeed we want show that, for every $d\geq 2,$ it can be constructed a $d$-generator soluble group $G$ with the property that $\Gamma^*_{1,d-1}(G)$ 
contains two distinct vertices $\alpha_1=(g_{1,1},\dots,g_{1,d-1})$ and $\alpha_2=(g_{2,1},\dots,g_{2,d-1})$ without a common adjacent vertex.
First we note that Proposition \ref{richiami} has the following corollary.
\begin{cor}\label{coro}
	Let $d$ be a positive integer with $d\geq 2$, let $V=\mathbb F_2 \times \mathbb F_2,$ where $\mathbb F_2$ is the field with 2 elements and let $\Gamma=\GL(2,2)\ltimes V^{u}$ with $u=2(d-1).$ Assume that $\langle k_1,\dots,k_d \rangle =\GL(2,2)$ and let $\gamma_1=k_1(v_{11},\dots,v_{1u}),\dots, \gamma_d=k_d(v_{d1},\dots,v_{du})$ in $\Gamma.$ We have $\Gamma=\langle \gamma_1, \dots,\gamma_d\rangle$ if and only if $$\begin{pmatrix}1-k_1&\dots&1-k_d\\v_{11}&\dots&v_{d1}\\\dots&\dots&\dots\\v_{1u}&\dots&v_{du}\end{pmatrix}\neq 0.$$
\end{cor}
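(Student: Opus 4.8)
The plan is to read the corollary as the specialization of Proposition~\ref{richiami} to $K=\GL(2,2)$ and $V=\mathbb F_2\times\mathbb F_2$, and then to exploit the fact that for the prescribed value of $u$ the relevant matrix becomes \emph{square}, so that the rank condition of Proposition~\ref{richiami}(2) collapses to the non-vanishing of a determinant.

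First I would verify that the data $(K,V)$ fit the hypotheses of the paragraph preceding Proposition~\ref{richiami}. The group $K=\GL(2,2)\cong S_3$ is soluble and acts faithfully and irreducibly on its natural module $V=\mathbb F_2\times\mathbb F_2$; this module is in fact absolutely irreducible, so $F=\End_K(V)=\mathbb F_2$ and $n=\dim_F V=2$. The given tuple $(k_1,\dots,k_d)$ plays the role of the fixed generating $d$-tuple of $K$ (which exists since $d\geq 2=d(K)$). With these values one has $u=2(d-1)=n(d-1)$, i.e. $u$ attains the extremal bound of Proposition~\ref{richiami}; in particular $\Gamma=V^u\rtimes K$ is $d$-generated, so asking whether $\gamma_1,\dots,\gamma_d$ generate $\Gamma$ is meaningful.

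Next I would rewrite the matrices of Proposition~\ref{richiami} in the present notation. Regarding each $k_i$ as an element of $\GL(2,\mathbb F_2)$ and setting $A_i=I_2-k_i=1-k_i$, and letting $D_i$ be the $u\times n$ matrix whose $j$-th row is $v_{ij}$ (so that $w_i=(v_{i1},\dots,v_{iu})$), part~(2) of Proposition~\ref{richiami} gives
$$\Gamma=\langle\gamma_1,\dots,\gamma_d\rangle \iff \ran\begin{pmatrix}A_1&\cdots&A_d\\ D_1&\cdots&D_d\end{pmatrix}=n+u=2d.$$
It then suffices to recognise this $(n+u)\times(nd)$ block matrix as the matrix displayed in the statement: the top strip $(A_1\ \cdots\ A_d)$ is $(1-k_1\ \cdots\ 1-k_d)$, and for $1\le j\le u$ the $j$-th row of the lower strip $(D_1\ \cdots\ D_d)$ is $(v_{1j}\ v_{2j}\ \cdots\ v_{dj})$, which is exactly the $(j+1)$-st row appearing in the corollary.

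Finally I would observe that this matrix has $n+u=2d$ rows and $nd=2d$ columns, hence is square of size $2d\times 2d$; for a square matrix the condition $\ran=2d$ is equivalent to invertibility, i.e. to nonzero determinant, which is precisely what the symbol $\neq 0$ abbreviates in the statement. This yields the asserted equivalence. The whole argument is essentially bookkeeping once Proposition~\ref{richiami} is at hand; the only point deserving genuine care is the computation $\End_K(V)=\mathbb F_2$, giving $n=2$, for it is exactly this that forces $u=n(d-1)$ and turns the rank criterion into a determinant criterion.
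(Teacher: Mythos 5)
Your proof is correct and is exactly the intended derivation: the paper states this result without proof as an immediate consequence of Proposition~\ref{richiami}, and your specialization ($F=\End_K(V)=\mathbb F_2$, $n=2$, hence $u=n(d-1)$ and the rank-$(n+u)$ condition on a square $2d\times 2d$ matrix becoming a nonzero-determinant condition) is precisely the bookkeeping the authors leave to the reader. No gaps.
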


Now let $H=\GL(2,2)\times \GL(2,2)$ and let $$W=(V_{11}\times \dots \times V_{1u})\times (V_{21}\times \dots \times V_{2u})$$ be the direct product of $2u$ 2-dimensional vector spaces over the field $\mathbb F_2$ with two elements. We define an action of $H$ on $W$ by setting
$$((v_{11},\dots,v_{1u}),(v_{21},\dots,v_{2u}))^{(x,y)}=((v_{11}^x,\dots,v_{1u}^x),(v_{21}^y,\dots,v_{2u}^y))$$
and we consider the semidirect product $G=H\ltimes W.$
Let $$\begin{aligned}
N_1:=&C_G(V_{21})=\dots=C_G(V_{2u})=\{(k,1)\mid k\in \GL(2,2)\},\\
N_2:=&C_G(V_{11})=\dots=C_G(V_{1u})=\{(1,k)\mid k\in \GL(2,2)\}.\\
\end{aligned}$$
A set of representatives for the $G$-isomorphism classes of
the complemented chief factors of $G$ contains precisely 5 elements:
\begin{itemize}
	\item $Z,$ a central $G$-module of order 2, with $R_G(Z)=G^\prime=\ssl(2,2)^2\ltimes W$;
	\item $U_1,$ a non-central $G$-module of order
	3, with $R_G(U_1)=N_2\ltimes W$;
	\item $U_2,$ a non-central $G$-module of order
	3, with $R_G(U_2)=N_1\ltimes W$;
	\item $V_{11}$, with $R_G(V_{11})=V_{21}\times \dots \times V_{2u} \times N_2$;
	\item $V_{21}$, with $R_G(V_{21})=V_{11}\times \dots \times V_{1u} \times N_1$.
\end{itemize}
Let $$
\begin{aligned}(x_1,y_1)((v_{111},\dots,v_{11u}),(v_{121},\dots,v_{12u}))&=g_1,\\
\dots\dots \dots\dots \dots\dots \dots\dots \dots\dots \dots \dots \dots &  \dots \dots\\ (x_d,y_d)((v_{d11},\dots,v_{d1u}),(v_{d21},\dots,v_{d2u}))&=g_d.\end{aligned}$$
We want to apply  Proposition \ref{prouno} to check whether $\langle g_1,\dots,g_d \rangle=G$. The three conditions
$$\langle g_1,\dots,g_d \rangle R_G(Z)=G, \langle g_1,\dots,g_d\rangle R_G(U_1)=G,  \langle g_1,\dots,g_d\rangle R_G(U_2)=G$$ are equivalent to
$\langle g_1,\dots,g_d\rangle W=G$, i.e. to $\langle (x_1,y_1),\dots, (x_d,y_d)\rangle=H.$ Moreover 
$\langle g_1,\dots,g_d\rangle R_G(V_{11})=G$ { if and only if } 
$$\langle x_1(v_{111},\dots,v_{11u}),\dots, x_d(v_{d11},\dots,v_{d1u})\rangle=(V_{11}\times \dots \times V_{1u})\rtimes \GL(2,2),$$ 
$\langle g_1,\dots,g_d\rangle R_G(V_{21})=G$ { if and only if } 
$$\langle y_1(v_{121},\dots,v_{12u}),\dots, y_d(v_{d21},\dots,v_{d2u})\rangle=(V_{21}\times \dots \times V_{2u})\rtimes \GL(2,2).$$ 
Applying Corollary \ref{coro} we conclude that
$$\langle g_1,\dots,g_d \rangle=G$$ if and only if the following conditions are satisfied:
$$\begin{aligned}(1)&\quad \quad \quad \quad \quad
\langle (x_1,y_1), \dots,(x_d,y_d)\rangle=H=\GL(2,2)\times \GL(2,2),\\
(2)&\quad \quad \quad \quad \quad\det\begin{pmatrix}1-x_1&\dots&1-x_d\\v_{111}&\dots&v_{d11}\\\dots&\dots&\dots\\v_{11u}&\dots&v_{d1u}
\end{pmatrix}\neq 0,\\
(3)&\quad \quad \quad \quad \quad\det\begin{pmatrix}1-y_1&\dots&1-y_d\\v_{121}&\dots&v_{d21}\\\dots&\dots&\dots\\v_{12u}&\dots&v_{d2u}
\end{pmatrix}\neq 0.
\end{aligned}$$
Consider the following elements of $\GL(2,2)$:
$$x:=\begin{pmatrix}1&0\\1&1\end{pmatrix},\quad
y:=\begin{pmatrix}1&1\\1&0\end{pmatrix}, \quad z:=\begin{pmatrix}1&1\\0&1\end{pmatrix},$$
and the following elements of $\mathbb F_2^2$:
$$0=(0,0),\quad e_1=(1,0),\quad e_2=(0,1).$$
Let
$$\begin{aligned}a_{11}:=&(x,x)((0,e_2,0,\dots,0)),(0,e_2,0,\dots,0)),\\ a_{12}:=&(x,x)((e_1,e_2,0,\dots,0),(e_1,e_2,0,\dots,0)),\\
a_2:=&((0,0,e_1,e_2,0,\dots,0),(0,0,e_1,e_2,0,\dots,0)),\\
&\dots \dots \dots \dots\dots\dots \dots \dots \dots\dots \dots \dots \dots\dots\\
a_{d-1}:=&((0,\dots,0,e_1,e_2),(0,\dots,0,e_1,e_2)),\\
b_1:=&(y,z)((e_1,0,\dots,0),(e_1,0,\dots,0)),\\ b_2:=&(y,z)((0,\dots,0),(e_1,0,\dots,0)).
\end{aligned}$$
It can be easily checked that
either $a_{11},a_2,\dots,a_{d-1},b_1$ as  $a_{12},a_2,\dots,a_{d-1},b_2$ satisfy the three conditions (1), (2) (3) and therefore $$\langle a_{11},a_2,\dots,a_{d-1},b_1\rangle=\langle a_{12},a_2,\dots,a_{d-1},b_2\rangle=G.$$
Now we want to prove that there is no $b\in G$ with  
$$\langle a_{11},a_2,\dots,a_{d-1},b\rangle=\langle a_{12},a_2,\dots,a_{d-1},b\rangle=G.$$

Let $b=(h_1,h_2)((v_{11},\dots,v_{1u}),(v_{21},\dots,v_{2u}))
$, and assume by contradiction that $\langle a_{11},a_2,\dots,a_{d-1},b\rangle=\langle a_{12},a_2,\dots,a_{d-1},b\rangle=G.$ We must have in particular that condition (1) holds, i.e. $\langle (x,x), (h_1,h_2)\rangle=H.$ Since $(x,x)$ has order 2 and $H$ cannot be generated by two involutions (otherwise it would be a dihedral group) at least one of the two elements $h_1, h_2$ must have order 3: it is not restrictive to assume $h_1=y.$
Let $$A=\begin{pmatrix}1-x&0_{2\times 2}&\cdots&0_{2\times 2}\end{pmatrix}=\begin{pmatrix}0&0&0&\cdots&0\\1&0&0&\dots&0
\end{pmatrix},\quad B=1-y=\begin{pmatrix}0&1\\1&1
\end{pmatrix}$$
$$C_1=\begin{pmatrix}\begin{matrix}0&0\\0&1\end{matrix}&0_{2\times u-2}\\0_{u-2\times 2}&I_{u-2}\end{pmatrix},\quad  C_2=\begin{pmatrix}\begin{matrix}1&0\\0&1\end{matrix}&0_{2\times u-2}\\0_{u-2\times 2}&I_{u-2}\end{pmatrix},$$ $$D=\begin{pmatrix}v_{11}\\\vdots\\v_{1u}\end{pmatrix}=\begin{pmatrix}D_1\\D_2\end{pmatrix} \text{ with }D_2\in M_{u-2\times 2}(\mathbb F_2) \text \and D_1=\begin{pmatrix}
\alpha&\beta\\\gamma&\delta
\end{pmatrix}.$$
Conditions (2) 
must be satisfied, hence we must have
$$\det \begin{pmatrix}A&B\\C_1&D\end{pmatrix}=\det \begin{pmatrix}A&B\\C_2&D\end{pmatrix}=1.$$
However 
$$\det \begin{pmatrix}A&B\\C_1&D\end{pmatrix}=\det\begin{pmatrix}
\begin{matrix}0&0\\1&0
\end{matrix}&0_{2\times u-2}&\begin{matrix}0&1\\1&1
\end{matrix}\\\begin{matrix}0&0\\0&1\end{matrix}&0_{2\times u-2}&D_1\\0_{u-2\times 2}&I_{u-2}&D_2
\end{pmatrix}=
\det\begin{pmatrix}0&0&0&1\\1&0&1&1\\0&0&\alpha&\beta\\0&1&\gamma&\delta\end{pmatrix}=\alpha,$$
$$\det \begin{pmatrix}A&B\\C_2&D\end{pmatrix}=\det\begin{pmatrix}
\begin{matrix}0&0\\1&0
\end{matrix}&0_{2\times u-2}&\begin{matrix}0&1\\1&1
\end{matrix}\\\begin{matrix}1&0\\0&1\end{matrix}&0_{2\times u-2}&D_1\\0_{u-2\times 2}&I_{u-2}&D_2
\end{pmatrix}=
\det\begin{pmatrix}0&0&0&1\\1&0&1&1\\1&0&\alpha&\beta\\0&1&\gamma&\delta\end{pmatrix}=\alpha+1.$$
However, since $\alpha\in \mathbb F_2$ either $\alpha=0$ or $\alpha+1=0,$
so there is no $b\in G$ with  $\langle a_{11},a_2,\dots,a_{d-1},b\rangle=\langle a_{12},a_2,\dots,a_{d-1},b\rangle=G.$

\

We conclude this section noticing  that Theorem \ref{abd} can be applied to bound the diameter of the swap graph.

\begin{thm}\label{swapdiam} Suppose that a finite soluble group $G$ has the following property: if $A$ is
	a non-trivial irreducible $G$-module $G$-isomorphic to a complemented chief factor of $G$, then $|\End_G(A)|>2$ (this holds in particular when the derived subgroup of $G$ is either nilpotent or of odd order).
	If $d\geq d(G),$ then the diameter of the swap graph $\Sigma_d(G)$ is at most $2d-1.$ 
\end{thm}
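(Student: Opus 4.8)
The plan is to connect the diameter of the swap graph $\Sigma_d(G)$ to the vertex-joining property established in Theorem \ref{abd}. The key observation is that Theorem \ref{abd}, applied with the splitting $s = d-1$ and $t = 1$, tells us (since $G$ has the hypothesis guaranteeing $|\End_G(A)|>2$, so the $t=1$ case is permitted) that any two vertices $x_1, x_2 \in V_{d-1}$ in the graph $\Gamma^*_{d-1,1}(G)$ have a common neighbour $y \in V_1 = G$; that is, for any two generating $d$-tuples we may replace the last coordinate so that both become generating with a common final entry. I would reformulate this as a statement purely about $d$-tuples: given any $(x_{11},\dots,x_{1,d-1})$ and $(x_{21},\dots,x_{2,d-1})$ that extend to generating $d$-tuples, there is a single $z$ with $\langle x_{11},\dots,x_{1,d-1},z\rangle = \langle x_{21},\dots,x_{2,d-1},z\rangle = G$.

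The main step is to bound the swap distance between two arbitrary generating $d$-tuples $\omega = (g_1,\dots,g_d)$ and $\omega^* = (g_1^*,\dots,g_d^*)$. First I would argue that each of the first $d-1$ coordinates can be aligned one at a time: replacing a single coordinate of a generating tuple so as to match the corresponding target coordinate. The subtlety is that one cannot simply set $g_i := g_i^*$ directly, since the intermediate tuple need not generate $G$. Instead, the strategy is to peel off coordinates using the common-neighbour property: working in the quotients by crowns (mirroring the induction in Theorem \ref{abd}), one shows that after at most two swaps per coordinate one can bring the $i$-th entries into agreement while preserving the generating property. Accumulating these swaps, I expect to reach a common tuple after roughly $2(d-1)$ swaps from each side, but by using the common final entry $z$ from Theorem \ref{abd} as a pivot, the two tuples can be made to meet at a shared intermediate tuple rather than traversing the full distance from each — yielding the combined bound $2d-1$.

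Concretely, I would proceed as follows. Write $\alpha_i = (g_1,\dots,g_{d-1})$ and $\alpha_i^* = (g_1^*,\dots,g_{d-1}^*)$ as the truncations. By Theorem \ref{abd} there is a $z$ adjacent to both $\alpha = (g_1,\dots,g_{d-1})$ and $\alpha^* = (g_1^*,\dots,g_{d-1}^*)$ in $\Gamma^*_{d-1,1}(G)$, so both $(g_1,\dots,g_{d-1},z)$ and $(g_1^*,\dots,g_{d-1}^*,z)$ generate $G$. From $\omega$ one reaches $(g_1,\dots,g_{d-1},z)$ in a single swap (changing only the last coordinate), and similarly $\omega^*$ reaches $(g_1^*,\dots,g_{d-1}^*,z)$ in one swap. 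It then remains to join $(g_1,\dots,g_{d-1},z)$ to $(g_1^*,\dots,g_{d-1}^*,z)$ within $\Sigma_d(G)$ using at most $2(d-1)-1$ swaps, which I would establish by induction on $d$: fix the shared last coordinate and recurse on the generation problem for the remaining $d-1$ coordinates, invoking Theorem \ref{abd} again at the level of the first $d-2$ entries.

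The hard part will be controlling the swap count in the recursion so that the total does not exceed $2d-1$: a naive alignment of each of the $d$ coordinates with up to two swaps apiece would give roughly $2d$ swaps, and shaving off the final swap requires arranging that the two tuples already agree in their last coordinate before the last alignment step, so that the two halves of the path share an endpoint rather than merely a coordinate. I would handle this by setting up the induction hypothesis as a bound of the form $2d-1$ precisely and verifying the base case $d = d(G)$ (or $d=1$, where the statement is immediate) carefully, then checking that each inductive step adds exactly two swaps while the shared pivot coordinate absorbs one swap's worth of savings. The use of Gasch\"utz's lemma (the theorem following Lemma \ref{norsbg}) and the crown machinery of Proposition \ref{prouno} and Lemma \ref{sotto} will be needed to lift the generating conditions from the relevant quotients back to $G$ at each stage of the recursion.
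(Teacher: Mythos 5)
Your concrete plan (third paragraph) is correct and, once the recursion is unrolled, coincides with the paper's proof: one applies Theorem \ref{abd} with $t=1$ a total of $d-1$ times to produce common pivot entries $x_1,\dots,x_{d-1}$, and the resulting path of length $2d-1$ passes through the shared middle tuple exactly as you describe, with the count $1+1+(2(d-1)-1)=2d-1$ closing cleanly by induction (base case: tuples differing in one entry are adjacent). The difficulties you anticipate in your second and fourth paragraphs do not arise — no fresh crown or Gasch\"utz arguments are needed, since Theorem \ref{abd} already packages all of that.
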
 
\begin{proof}
Assume  that $G=\langle a_1, \ldots, a_d\rangle=\langle b_1,\ldots, b_d\rangle.$ By Theorem \ref{abd}, there exists $x_1\in G$ such that $G=\langle x_1, a_2, \ldots,a_d\rangle=\langle x_1,b_2,\ldots,b_d\rangle.$  Applying $d-1$ times Theorem \ref{abd}, we find elements $x_i$, for $1\leq i \leq d-1$ satisfying 
$$G=\langle x_1, \ldots,x_{i-1}, x_i , a_{i+1},\ldots,a_d\rangle=\langle x_1, \ldots,x_{i-1}, x_i , b_{i+1},\ldots,b_d\rangle.$$  Hence  $\Sigma_d(G)$ contains the following path of length $2d-1$:
\begin{equation*}
\begin{matrix}
(a_1, \ldots, a_d), \\
(x_1, a_2,\ldots, a_d),\\
(x_1,x_2,a_3,\ldots,a_d),\\
\vdots \\
(x_1,\ldots,x_{d-1},a_d),\\
(x_1,\ldots,x_{d-1},b_d),\\
(x_1,\ldots,x_{d-1},b_{d-1},b_d),\\
\vdots \\
(x_1,b_2,\ldots,b_d),\\
(b_1,\ldots,b_d).\\
\end{matrix}
\end{equation*}
Since this path has length $2d-1$, we are done.
\end{proof}

\section{Direct powers of simple groups}\label{powsim}

In this section we will try to generalize some results proved in \cite{pwesim} concerning the generating graph of direct powers of non-abelian simple groups. As a by-product, we will see that the bounds on the diameter of the graphs $\Gamma^*_{a,b}(G)$, proved in Section \ref{boundsol}, does not remain true if we drop the solubility assumption: for every positive integer $\eta$ and every pair $a, b$ of positive integers, a finite group $G$ can be constructed such that $d(G)=a+b$ and $\Gamma^*_{a,b}(G)$ is connected with diameter at least $\eta.$

Let $S$ be a non-abelian finite simple group and denote by $A$ the automorphism group $\aut(S)$
of $S$. As usual we identify $S$ with the subgroup of $A$ consisting of the inner automorphisms. Let $d\geq 2$ be a positive integer and define
$\tau=\tau_d(S)$ to be the largest positive integer $r$ such
that $S^{r}$, the direct product of $r$ copies of $S,$ can be
generated by $d$ elements. Notice that the group $S^{r}$ cannot be
generated by $d$ elements whenever $r$ is larger than the number
of $A$-orbits on the set of $d$-tuples generating
$S$.
Actually,
$\tau$ is equal to the number of $A$-orbits on
ordered $d$-tuples of generators for $S$ and, for arbitrary elements
$x_1=(x_{1,1},\dots,x_{1,\tau}),\dots,x_d=(x_{d,1},\dots,x_{d,\tau})$ of $S^\tau,$ we
have that $S^\tau = \langle x_1,\dots,x_d \rangle$ if and only if the $d$-tuples
$(x_{1,i},\dots,x_{d,i})$ are distinct representatives for these orbits for $1 \leq i \leq \tau$.
Let $K=\aut(S^\tau).$ Recall that $K\cong A \wr
\mathrm{Sym}(\tau)$. Clearly $K\leq \aut(\Gamma_{a,b}(S^\tau))$ for every $a,b$ with $a+b=d$. The following easy remark will play a crucial role in our discussion.

\begin{lemma}\label{pi} Assume $S^\tau = \langle x_1,\dots,x_d \rangle.$ If $S^\tau=\langle
	y_1, \dots, y_d \rangle,$ then there exists $k \in K$ such that $(y_1, \dots, y_d)=(x_1^{k},\dots,x_d^{k})$.
\end{lemma}

\begin{proof} Assume $x_i=(x_{i,1},\dots,x_{i,\tau}),$ $y_j=(y_{j,1},\dots,y_{j,\tau})$ for $1\leq i,j \leq d.$
	Both $(x_{1,1},\dots,x_{d,1}),\dots,$ $(x_{1,\tau},\dots,x_{d,\tau})$ and
	$(y_{1,1},\dots,y_{d,1}),\dots,$ $(y_{1,\tau},\dots,y_{d,\tau})$	
	form a set of
	representatives for the $A$-orbits of the set of
	generating $d$-tuples for $S$. So there exist $\pi \in \perm(\tau)$ and $(a_1,\dots,a_\tau)\in A^\tau$ such that
	$(y_{1,i\pi},\dots,y_{d,i\pi})=(x_{1,i},\dots,x_{d,i})^{a_i}$ for each $i \in \{1,\dots,\tau\}.$ It follows that $(y_1, \dots, y_d)=(x_1^{k},\dots,x_d^{k})$ for $k=(a_1,\dots,a_\tau)\pi \in K.$
\end{proof}

\begin{cor}\label{arctrans} Let $\tau=\tau_{a+b}(S).$ Then the graph $\Gamma^*_{a,b}(S^\tau)$ is
	edge-transitive.
\end{cor}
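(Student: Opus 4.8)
The plan is to exhibit a group of graph automorphisms that already acts transitively on the edge set, namely the restriction to $\Gamma^*_{a,b}(S^\tau)$ of $K=\aut(S^\tau)\cong A\wr\sym(\tau)$. As recalled just before Lemma \ref{pi}, with $G=S^\tau$ the group $K$ acts on $G^a\cup G^b$ by the diagonal rule $(g_1,\dots,g_r)\mapsto (g_1^k,\dots,g_r^k)$, and since each $k\in K$ is an automorphism of $G$ it preserves the relation $\langle x_1,\dots,x_a,y_1,\dots,y_b\rangle=G$; hence $K\le \aut(\Gamma_{a,b}(S^\tau))$, as noted in the excerpt. Because graph automorphisms preserve vertex degrees, they send isolated vertices to isolated vertices, so each $k\in K$ restricts to an automorphism of $\Gamma^*_{a,b}(S^\tau)$, giving $K\le \aut(\Gamma^*_{a,b}(S^\tau))$.

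Next I would invoke the correspondence described in Section 2 between the edges of $\Gamma_{a,b}(G)$ and the ordered generating $(a+b)$-tuples of $G$: an edge joining $\alpha=(x_1,\dots,x_a)$ and $\beta=(y_1,\dots,y_b)$ is exactly the datum of a generating tuple $(x_1,\dots,x_a,y_1,\dots,y_b)\in\Phi_G(a+b)$. Deleting isolated vertices does not alter the edge set, so the same correspondence holds for $\Gamma^*_{a,b}(G)$. The crucial observation is that this correspondence is $K$-equivariant: the diagonal action of $k\in K$ carries the edge associated with $(g_1,\dots,g_{a+b})$ to the edge associated with $(g_1^k,\dots,g_{a+b}^k)$.

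The result then follows from Lemma \ref{pi}. Given two edges $e_1,e_2$ of $\Gamma^*_{a,b}(S^\tau)$, I would choose generating tuples $\omega_1,\omega_2\in\Phi_{S^\tau}(a+b)$ representing them. Since $\tau=\tau_{a+b}(S)$, the group $S^\tau$ is $(a+b)$-generated, so $\Phi_{S^\tau}(a+b)\neq\emptyset$ and Lemma \ref{pi} applies: there exists $k\in K$ with $\omega_2=\omega_1^k$ componentwise. By the $K$-equivariance of the edge correspondence this $k$ maps $e_1$ to $e_2$. Hence $K$, and a fortiori $\aut(\Gamma^*_{a,b}(S^\tau))$, is transitive on edges, which is the assertion.

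The one point requiring care is the case $a=b$, where the edge-to-tuple correspondence is two-to-one away from the loops (the tuples $(\alpha,\beta)$ and $(\beta,\alpha)$ give the same edge) and loops may occur. The argument above is unaffected, since choosing any ordered representative $\omega_1,\omega_2$ of $e_1,e_2$ suffices; and a graph automorphism can send a loop only to a loop and a non-loop only to a non-loop, a distinction matching the splitting of generating tuples into the ``doubled'' tuples $(\gamma,\gamma)$ and the rest, so transitivity on each type again follows from the same application of Lemma \ref{pi}. I expect this bookkeeping in the $a=b$ case to be the only genuine, if minor, obstacle; the heart of the matter is entirely Lemma \ref{pi}.
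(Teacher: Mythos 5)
Your argument is correct and is exactly the route the paper takes: the corollary is stated without proof as an immediate consequence of Lemma \ref{pi} together with the observation, made just before that lemma, that $K=\aut(S^\tau)\le\aut(\Gamma_{a,b}(S^\tau))$ and the edge--generating-tuple correspondence. Your cautionary remark about loops in the case $a=b$ is in fact vacuous here, since $a=b=(a+b)/2<a+b=d(S^\tau)$ (because $\tau_{a+b}(S)>\tau_{a+b-1}(S)$), so no $a$-tuple generates $S^\tau$ and $\Gamma_{a,a}(S^\tau)$ has no loops.
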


Now we will introduce other notations, useful to study  the graph $\Gamma_{a,b}(S^\tau).$ 
Fix a vertex $x=(x_1,\dots,x_a)$ in the part $V_a$ of $\Gamma^*_{a,b}(S^\tau)$ corresponding to  the $(a)$-tuples and a vertex  $y=(y_1,\dots,y_b)$ in the part $V_b$ corresponding to  the $(b)$-tuples and let $C=C_K(x)$ and $D=C_K(y).$ 
To describe more precisely $C$  we need the following information.
Let $s_1,\dots,s_u$ be a set of representatives for the $A$-orbits of
$S^a$ that can be completed to a generating $d$-tuple of $S$. Every vertex $x\in V_a$ can be viewed as an $a\times \tau$ matrix $(x_{i,j})$ with $x_{i,j}\in S.$ Denote by $\tau_i$ the number of columns of $x$ that are $A$-conjugate to $s_i$. By Corollary \ref{arctrans} this number is independent on the choice of $x.$ In particular $x$ is $K$-conjugate to
$\bar x$ with
$$\bar x=(\underbrace{s_1,\ldots,s_1}_{\tau_1 \mbox{ terms}},\underbrace{s_2,\ldots,s_2}_{\tau_2 \mbox{ terms}},\dots,\underbrace{s_u,\ldots,s_u}_{\tau_u \mbox{ terms}}).$$
It follows that $C\cong C_K(\bar x)= \prod_{1\leq i \leq u}C_{A}(s_i)\wr \mathrm{Sym}(\tau_i)$. Clearly we have a similar description for $D=C_K(y)$, with the only difference that the role of $s_1,\dots,s_u$ will be played by a set of representatives $t_1,\dots,
t_v$  for the $A$-orbits of
$S^b$ that can be completed to a generating $d$-tuple of $S$.

\begin{lemma}\label{neig}Assume $1\leq a \leq b$ with $(a,b)\neq (1,1).$
	For every $i\neq 1$, there exists $\bar y_i \in V_a$ such that
	\begin{enumerate}
		\item $\bar x$ and $\bar y_i$ have a common  neighbour in $\Gamma^*_{a,b}(S^\tau)$.
		\item The first column of $\bar y_i$ is $A$-conjugate to $s_i.$
		\item $\bar x$ and $\bar y_i$ differ only for 2-columns.
	\end{enumerate} 
\end{lemma}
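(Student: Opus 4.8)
The plan is to work with the combinatorial description of the edges of $\Gamma^*_{a,b}(S^\tau)$ furnished by Lemma \ref{pi}. Viewing a vertex of $V_a$ as an $a\times\tau$ matrix over $S$ and a vertex of $V_b$ as a $b\times\tau$ matrix, and writing $\mathcal O$ for the set of the $\tau$ orbits of $A$ on the generating $d$-tuples of $S$, the pair $\bar x=(c_1,\dots,c_\tau)\in V_a$ (columns $c_j\in S^a$) and $z=(e_1,\dots,e_\tau)\in V_b$ is an edge precisely when $j\mapsto [(c_j,e_j)]$ is a bijection from $\{1,\dots,\tau\}$ onto $\mathcal O$. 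Let $\pi_a\colon\mathcal O\to\{[s_1],\dots,[s_u]\}$ send the orbit of $(c,e)$ to the $A$-orbit $[c]$ of its $a$-part; its image is exactly the set of completable $a$-orbits. First I would observe, by reading off the $a$-parts of the columns of any single edge at $\bar x$, that $\tau_i=|\pi_a^{-1}([s_i])|$, so that $\tau_i\ge 1$ for every $i$. Consequently the canonical representative $\bar x$ has some column, say column $j$, equal to $s_i$, and $j\neq 1$ since $c_1=s_1$ and $i\neq 1$.

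I would then define $\bar y_i$ by interchanging the entries of columns $1$ and $j$ of $\bar x$, so that $\bar y_i$ carries $s_i$ in column $1$, $s_1$ in column $j$, and agrees with $\bar x$ in all other columns. Assertions (2) and (3) are then immediate. The real content is (1), and I would reduce it to the single statement about $S$
$$(\star)\qquad \langle s_1,e^*\rangle=\langle s_i,e^*\rangle=S\ \text{ for some } e^*\in S^b.$$
Granting $(\star)$, put $P_1=[(s_1,e^*)]$ and $P_j=[(s_i,e^*)]$; these are two distinct members of $\mathcal O$, since their $a$-parts are the inequivalent orbits $[s_1]\neq[s_i]$. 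A short matching argument then produces a common neighbour: the multiset of $a$-orbits of the columns of $\bar x$ different from $1$ and $j$ equals $\{\pi_a(P):P\in\mathcal O\setminus\{P_1,P_j\}\}$ (both sides split into the same $A$-orbit classes of equal size), so I can choose entries $e_k$ for $k\neq 1,j$ making the columns $(c_k,e_k)$ run bijectively through $\mathcal O\setminus\{P_1,P_j\}$, and then set $e_1=e_j=e^*$. In $(\bar x\mid z)$ the columns $1$ and $j$ realize $P_1$ and $P_j$, while in $(\bar y_i\mid z)$ they realize $P_j$ and $P_1$; in both matrices every orbit of $\mathcal O$ occurs exactly once, so $z$ is simultaneously adjacent to $\bar x$ and $\bar y_i$.

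The hard part will be $(\star)$: exhibiting a common $b$-element completion of two $a$-tuples, each of which is itself completable to a generating set of the simple group $S$. This is where the hypotheses $a\le b$ and $(a,b)\neq(1,1)$ enter — the latter forcing $b\ge 2$ when $a=1$ — and where generation results for finite simple groups are needed. When $a\ge 2$ (or more generally when $b$ is large) I would argue by a union bound: the probability that a uniformly random $e^*\in S^b$ fails $\langle s_1,e^*\rangle=S$ is at most $\sum_{M}|S:M|^{-b}$, the sum running over the maximal subgroups $M\ge\langle s_1\rangle$, and similarly for $s_i$; once both sums are $<\tfrac12$ a common $e^*$ exists. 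The genuinely delicate case is $a=1$, where $s_1,s_i$ are single elements: here the uniform spread $\ge 2$ of the non-abelian finite simple groups (Breuer--Guralnick--Kantor) directly supplies a single $y$ with $\langle s_1,y\rangle=\langle s_i,y\rangle=S$, whence $e^*=(y,\ast,\dots,\ast)$ works for every $b\ge 1$ (degenerate trivial tuples being dealt with by hand). Combining this generation input with the reduction above yields (1), and hence the lemma.
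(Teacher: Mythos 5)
Your combinatorial skeleton is sound and is essentially the paper's: you pass to the canonical representative $\bar x$, note $\tau_i\geq 1$, swap the first column with a column carrying $s_i$ (so that (2) and (3) are immediate), and manufacture a common neighbour by reassigning which $A$-orbit of generating $d$-tuples each column realises; the multiset-matching step and the observation that $P_1\neq P_j$ because their $a$-parts lie in distinct $A$-orbits are both correct. The genuine problem is your justification of $(\star)$. The union bound $\sum_{M\geq\langle s_1\rangle}|S:M|^{-b}<\tfrac12$ is left as an unverified hypothesis (``once both sums are $<\tfrac12$''), and it can fail: under the lemma's hypotheses the trivial $a$-tuple is completable (because $b\geq 2=d(S)$), so it occurs among the $s_m$, and for it the sum runs over \emph{all} maximal subgroups of $S$; for $S=\alt(6)$ and $b=2$ that sum is $\tfrac{12}{36}+\tfrac{30}{225}+\tfrac{10}{100}=\tfrac{17}{30}>\tfrac12$. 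So the case $a\geq 2$, where you do not invoke spread, is not actually covered for small simple groups, and repairing it via the true inequality $1-P(S,2)<\tfrac12$ would import a nontrivial theorem on random generation of simple groups.

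The irony is that $(\star)$ needs no generation theory at all, and this is exactly how the paper avoids the issue. Under $1\leq a\leq b$ and $(a,b)\neq(1,1)$ one always has $b\geq 2=d(S)$, so $S$ admits a generating $b$-tuple $e^*$, and then $\langle s_m,e^*\rangle=S$ for \emph{every} $m$ at once --- in particular for $m=1$ and $m=i$. The paper packages this by first normalising $s_1=(1,\dots,1)$ (legitimate, since the trivial $a$-tuple is completable) and then observing that in any edge $(\bar x,z)$ the $b$-part $t_1$ below the trivial first column must already generate $S$, whence $(s_i,t_1)$ is a generating $d$-tuple for every $i$; it then keeps the very same neighbour $z$ and merely swaps, with an $A$-twist, the orbits realised by columns $1$ and $j$, so it does not even need your matching argument. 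With $(\star)$ obtained in this elementary way (or with the spread input correctly extended to all $a$), your proof goes through; as submitted, the step you yourself flag as ``the hard part'' is the one that is not established.
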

\begin{proof}Since $b\geq 2$ and $d(S)=2,$ there exists $i$ such that $s_i=(1,\dots,1)$. Hence we may assume $s_1=(1,\dots,1)$.
Let $z\in V_b$ be adjacent to $\bar x$ in $\Gamma^*_{a,b}(S^\tau).$ We identify $z$ with a matrix $(t_1,\dots,t_\tau)$ where $t_j\in S^b$ for every $j.$ The columns of the matrix
	$$E:=\begin{pmatrix}s_1&\dots&s_1&\dots&s_u&\dots&s_u\\
	t_1&\dots&t_{\tau_1}&\dots&t_{\tau-\tau_u+1}&\dots&t_\tau
	\end{pmatrix}$$
	are a set of representatives of the $A$-orbits on the generating $d$-tuples of $S.$ Since $s_1=(1,\dots,1),$ $t_1$ must be a generating $b$-tuple of $S$, so $(s_i,t_1)$ (being a generating $d$-tuple of $S$) is $A$-conjugate to the $j$-th column of  $E$ for some $\tau_1<j\leq \tau.$ This means that the $j$-th column of $E$ is
	$$\begin{pmatrix}
	s_i\\t_1^\alpha
	\end{pmatrix}$$ for some $\alpha\in A.$ It follows that
	if we replace the first column of $E$ with
	$$\begin{pmatrix}
	s_i^{\alpha^{-1}}\\t_1
	\end{pmatrix}$$  and the $j$-th column with 
	$$\begin{pmatrix}
	s_1\\t_1^\alpha
	\end{pmatrix}$$ we get a matrix
	$E^*$, corresponding to an edge in $\Gamma^*_{a,b}(S^\tau)$ between $z$ and an element $\bar y_i$, obtained from $\bar x$ by replacing the first column with $s_i^{\alpha^{-1}}$ and the $j$th-column with $s_1$.
\end{proof}

\begin{thm}\label{connesso} Let $\tau=\tau_{a+b}(S).$  Then the graph $\Gamma^*_{a,b}(S^\tau)$ is connected.
\end{thm}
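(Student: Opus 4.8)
The plan is to show that the graph $\Gamma^*_{a,b}(S^\tau)$ is connected by exploiting the edge-transitivity established in Corollary \ref{arctrans} together with the structural information recorded in Lemma \ref{neig}. Since the graph is edge-transitive (hence vertex-transitive on each part when $a\neq b$, and on the whole vertex set when combined with the symmetry between parts), it suffices to show that a single fixed vertex, say the distinguished representative $\bar x\in V_a$, can reach every other vertex by a path. By vertex-transitivity within $V_a$ every vertex of $V_a$ is $K$-conjugate to $\bar x$, so I really only need to understand how $\bar x$ connects to the various $K$-orbit representatives.

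First I would dispose of the degenerate case $(a,b)=(1,1)$, where $\Gamma^*_{1,1}(S^\tau)$ is the ordinary generating graph of $S^\tau$; its connectivity follows from the general results quoted earlier (Corollary \ref{c15}), since $S^\tau$ is $2$-generated and $a+b=2=d(S^\tau)$ with $S^\tau$ requiring the unsoluble case — so this may need the separate treatment indicated by the hypothesis $(a,b)\neq(1,1)$ in Lemma \ref{neig}. With $(a,b)\neq(1,1)$ in force, the heart of the argument is Lemma \ref{neig}: for each orbit index $i\neq 1$ it produces a vertex $\bar y_i\in V_a$ whose first column is $A$-conjugate to $s_i$, which shares a common neighbour with $\bar x$ and differs from $\bar x$ in only two columns. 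The strategy is to use these moves as elementary steps of a walk: changing the orbit-type of one column (at the cost of adjusting a second column) while staying inside the same connected component.

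The key combinatorial step is then to argue that repeated application of the moves from Lemma \ref{neig}, together with the column-permuting action of $\mathrm{Sym}(\tau)\leq K$ which preserves connected components, allows one to transform $\bar x$ into any prescribed arrangement of column-types. Concretely, I would show that starting from $\bar x=(s_1,\dots,s_1,\dots,s_u,\dots,s_u)$ I can reach any vertex of $V_a$ up to the $K$-action: each elementary move lets me inject a chosen representative $s_i$ into the first column while pushing the displaced type elsewhere, and since every vertex of $V_a$ is a rearrangement of column-types, iterating these swaps (and permuting) realizes an arbitrary target type-vector in the same component. Because the graph is bipartite when $a\neq b$, I would finally note that connectivity of $V_a$ forces connectivity of the whole graph: every vertex of $V_b$ is adjacent to some vertex of $V_a$ (it is non-isolated), so $V_b$ is pulled into the same component.

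The main obstacle I anticipate is the bookkeeping that guarantees the iteration actually terminates at the desired configuration: each move from Lemma \ref{neig} changes two columns simultaneously, so I must verify that the induced action on the multiset of column-types is rich enough to connect all admissible type-vectors, and in particular that the availability of the trivial representative $s_1=(1,\dots,1)$ (used to seed the argument, via $b\geq 2$ and $d(S)=2$) suffices to generate all the needed transpositions of types. The delicate point is handling the case $a=b$ separately, where the two parts coincide and the relevant symmetry comes from the order-two swap exchanging the two blocks of a $d$-tuple; there edge-transitivity still holds but the bipartite reduction is unavailable, so I would instead invoke Lemma \ref{tri} (every non-loop edge lies in a triangle) to rule out a bipartite obstruction and argue connectivity directly on the single vertex set $V_a=V_b$.
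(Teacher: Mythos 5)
Your outline assembles the right ingredients (edge-transitivity from Corollary \ref{arctrans}, the two-column moves of Lemma \ref{neig}, reduction to the part $V_a$), but the central step has a genuine gap. You assert that ``the column-permuting action of $\perm(\tau)\leq K$ \dots preserves connected components'' and use this to rearrange column-types freely. A graph automorphism only \emph{permutes} the connected components; whether a given $k\in K$ maps the particular component $W_a$ of $\bar x$ back to itself is precisely the point at issue, and for the permutations mixing different type-blocks (which do not fix $\bar x$) there is no a priori reason this holds. The bookkeeping you flag as the anticipated obstacle --- showing that iterated two-column swaps reach every admissible configuration --- is exactly the hard part, and the paper replaces it with a different device: $W_a$ is a \emph{block} for the transitive action of $K$ on $V_a$, so its setwise stabilizer $H$ contains the point stabilizer $C=C_K(\bar x)$; each $\bar y_j$ of Lemma \ref{neig} equals $\bar x^{k_j}$ for some $k_j\in K$ and lies in $W_a\cap W_a^{k_j}$, forcing $k_j\in H$; the images $\sigma_j=k_j^\pi$ are transpositions which together with $C^\pi=\prod_i\perm(\tau_i)$ generate $\perm(\tau)$, so $H^\pi=\perm(\tau)$; and since $(z,1,\dots,1)\in C\leq H$ for every $z\in A$ (the first column of $\bar x$ is trivial), one gets $H=K$ and hence $V_a=\bar x^K=\bar x^H\subseteq W_a$. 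Without some substitute for this stabilizer computation your iteration does not close up.

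Two further points. First, your ``type-vector'' picture is too coarse: by the remark preceding Lemma \ref{neig} \emph{every} vertex of $V_a$ has the same multiset of column types, and distinct vertices differ not only in the arrangement of types over the $\tau$ positions but also in the choice of $A$-conjugate within each column; so even a complete analysis of arrangements would only account for the $\perm(\tau)$-part of $K$ and would miss the base group $A^\tau$, which the paper captures via the elements $(z,1,\dots,1)$ above. Second, the case $(a,b)=(1,1)$ is \emph{not} covered by Corollary \ref{c15}, since there $a+b=2=d(S^\tau)$ and $S^\tau$ is unsoluble; the paper quotes \cite[Theorem 3.1]{pwesim} for this case (you sensed something was off here, but the fallback you name does not exist in the paper). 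The separate treatment you propose for $a=b$ via Lemma \ref{tri} is unnecessary: the block argument is uniform in $a\leq b$ once $(a,b)\neq(1,1)$, and connectivity of the whole graph follows since every non-isolated vertex outside $V_a$ has a neighbour in $V_a$.
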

\begin{proof}Clearly the star $\Gamma^*_{0,b}(S^\tau)$ is connected and
	$\Gamma^*_{1,1}(S^\tau)$ is connected by \cite[Theorem 3.1]{pwesim}, so we may assume $0<a\leq b$ and $(a,b)\neq (1,1).$ In particular $b\geq 2.$ 
Let $W_a$ be the set of the elements of $V_a$ which belong to the connected 
	component of $\Gamma^*_{a,b}(S^\tau)$ which contains the vertex $\bar x.$
	The set $W_a$ is a block for the action of $K$ on $V_a$. In particular the setwise stabilizer $H$ of $W_a$ in $K$ contains the point stabilizer $C=C_K(\bar x)=\prod_{1\leq i \leq u}C_{A}(s_i)\wr \mathrm{Sym}(\tau_i).$ We identify  $K$ with $A \wr \mathrm{Sym}(\tau)$: in particular every element $k\in K$ can be written in the form $k=(a_1,\dots,a_\tau)\sigma$
	with $a_i \in A$ and $\sigma \in \perm(\tau)$ and the map $\pi: k \mapsto \sigma$ is a group homomorphism from $K$ to $\perm(\tau)$.
%
%
%
	
Since $C\leq H$, we have  $C^\pi=\prod_{1\leq i \leq u}\mathrm{Sym}(\tau_i)\leq H^{\pi}.$
	The orbits of $C^\pi$ are $\Omega_1=\{1,\dots,\tau_1\},$ $\Omega_2=\{\tau_1+1,\dots,\tau_1+\tau_2\},\dots,\Omega_u=\{\tau-\tau_u+1,\dots,\tau\}.$ Let $j\in \{2,\dots,u\}$ and choose $\bar y_j$ as in Lemma \ref{neig}.
	It follows from  Corollary \ref{arctrans} that $\bar y_j=\bar x^{k_j}$ for some $k_j \in K$. In particular $\bar y_j \in W_a\cap W_a^{k_j}$
	so, since $W_a$ is a block, $W_a=W_a^{k_j}$ and $k_j\in H.$ Let $\sigma_j=k^\pi_j:$ we have $\sigma_j=(1,i_j)$ with $i_j\in \Omega_j.$ This means that $\perm(\tau)=\langle \sigma_2,\dots,\sigma_u, \perm(\tau_1),\dots,\perm(\tau_u)\rangle\leq 
	\langle k_2,\dots,k_u, C\rangle^\pi\leq H^\pi,$ hence $H^\pi=\perm(\tau).$ We identify $S$ with $\inn (S)\leq A.$
	Let $z \in A$ and consider $k=(z,1,\dots,1)\in K.$ Clearly $\bar x^k=\bar x,$ hence $k\in H.$ But then
	$H$ contains $(z,1,\dots,1)$ for every $z\in A$: being  $H^\pi=\perm(\tau)$, this implies that $H=K.$
	
	
	%
%
%
%

	Now we have  $V_a=\bar x^K=\bar x^H \leq W_a,$ hence $W_a=V_a$ and $\Gamma^*_{a,b}(S^\tau)$ is a connected graph.
\end{proof}

Let $S=\ssl(2,2^p)$ with $p>3$. 
We are going to prove that 
$$\lim_{p\to \infty}\diam(\Gamma^*_{a,b}(S^{\tau_{a+b}(S)}))=\infty,$$ for every pair $a, b$ of positive integer.  Let $q=2^p$.	We have  $|S|=(q^2-1)q$ and $A=\aut(S)= S \rtimes \langle \phi \rangle$ with $\phi$ the Frobenius automorphism.
Note that, since $p\neq 3,$ then $p$ does not divide $|S|$; in particular $\langle \phi \rangle$ is a Sylow $p$-subgroup of $A.$ 	Given $k=(u_1,\dots,u_\tau)\pi \in K \leq A \wr \perm(\tau),$ let $\sigma_k$
be  the number of $i \in \{1,\dots,\tau\}$ with $u_i \notin S$. 

\begin{lemma}\label{numb} Let $k\in K.$\begin{enumerate}
\item If $k\in C$, then
	 $$\sigma_k \leq \begin{cases}6^a\cdot \frac{|S|^b}{p}&\text{if $a\neq 1$}\\3\cdot \frac{|S|^{d-1}}{pq}&\text{otherwise.}\end{cases}$$
\item If $k\in D,$  then
	 $$\sigma_k \leq \begin{cases}6^b\cdot \frac{|S|^a}{p}&\text{if $b\neq 1$}\\3\cdot \frac{|S|^{d-1}}{pq}&\text{otherwise.}\end{cases}$$
	 \end{enumerate}
\end{lemma}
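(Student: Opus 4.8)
We need to bound $\sigma_k$, the number of positions $i$ where the $A$-component $u_i$ of $k=(u_1,\dots,u_\tau)\pi$ lies outside $S$. The plan is to note first that $\sigma_k$ is controlled by $\tau$, the total number of columns, together with the fraction of columns whose stabilizing automorphism can be chosen outside $S$. Since $\langle\phi\rangle$ is a Sylow $p$-subgroup of $A$ and $p\nmid|S|$, an element $u_i\in A\setminus S$ must have order divisible by $p$, which is a strong restriction. The key quantitative input is a count of $\tau=\tau_d(S)$ and of the sizes of the relevant centralizers $C_A(s_i)$ and $C_A(t_j)$.

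\textbf{Estimating $\tau$.} First I would recall from the discussion preceding the lemma that $\tau=\tau_d(S)$ equals the number of $A$-orbits on generating $d$-tuples of $S$. Since $A$-orbits on $S^d$ have size at most $|A|=|S|\cdot p$, and since the proportion of generating $d$-tuples tends to $1$, we get an asymptotic $\tau \sim |S|^{d}/|A| = |S|^{d-1}/p$. More precisely one bounds $\tau \le |S|^d/(|S|\cdot 1)$ crudely but, to get the stated constants, one wants $\tau \le c\,|S|^{d-1}/p$ for an explicit constant; the factor $6^a$ or $6^b$ will arise from partitioning columns according to the representatives $s_i$ (resp.\ $t_j$) of $A$-orbits on $S^a$ (resp.\ $S^b$) that extend to generators, and bounding the number $u$ (resp.\ $v$) of such orbits.

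\textbf{The centralizer contribution and the $a=1$ versus $a\neq1$ dichotomy.} For $k\in C=C_K(\bar x)=\prod_i C_A(s_i)\wr\sym(\tau_i)$, an entry $u_i\notin S$ can only occur in a column whose centralizer $C_A(s_i)$ meets $A\setminus S$, i.e.\ contains an element of order divisible by $p$. The crucial case distinction is the following: when $a\neq1$, a generic $a$-tuple $s_i$ has trivial centralizer in $A$ (two or more generic elements of $S$ generate a subgroup whose normalizer avoids the outer Frobenius part), so only few orbits contribute, giving the bound with factor $6^a$. When $a=1$, however, $s_i$ is a single element, and $C_A(s_i)$ is large — roughly of size $q$ for regular semisimple or unipotent elements — and can contain outer elements; this forces the weaker bound $3\,|S|^{d-1}/(pq)$, where the extra factor $1/q$ reflects that single-element centralizers in $S$ already have order about $q$, so that $\tau_i$ is correspondingly smaller. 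The plan is to bound $\sigma_k\le\sum_i \tau_i\cdot[\,C_A(s_i)\not\subseteq S\,]$ and then estimate each $\tau_i$ via $\tau_i\le |s_i^A|\text{-type}$ counts.

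\textbf{Main obstacle.} The hard part will be pinning down the explicit constants $6^a$, $6^b$, $3$ rather than merely the asymptotic order of magnitude. This requires genuine control of the $A$-orbit structure on tuples of generators of $\ssl(2,q)$: one must count how many orbit-representatives $s_i$ have centralizer meeting $A\setminus S$, and bound the associated $\tau_i$. For $a=1$ I expect one uses that a single element lies in a cyclic torus of order $q-1$, $q+1$, or is unipotent of order $q$, and that $\phi$-fixed such tori are scarce, producing the extra $1/q$ saving. By the symmetry of the construction the part $(2)$ for $k\in D$ is proved identically with the roles of $a$ and $b$, and of $s_i$ and $t_j$, interchanged; so I would prove $(1)$ in full and then state that $(2)$ follows verbatim. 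I expect the calculation to be elementary once the orbit-count is in hand, so the real work is the combinatorial bookkeeping of the centralizer sizes.
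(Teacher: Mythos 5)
Your first reduction is correct and matches the paper: for $k\in C=\prod_i C_A(s_i)\wr\sym(\tau_i)$ an entry $u_i\notin S$ can occur only in a block whose representative $s_i$ satisfies $C_A(s_i)\not\leq S$, equivalently $p$ divides $|C_A(s_i)|$, so $\sigma_k\leq\sum_{i:\,C_A(s_i)\not\leq S}\tau_i$. But the entire content of the lemma is the two counts you explicitly defer, and the route you gesture at (generic tuples have trivial centralizer; tori of order $q\pm1$ and scarcity of $\phi$-fixed tori) is not the argument that produces the constants. The paper's key step is a one-line Sylow argument: since $p\nmid|S|$, the group $\langle\phi\rangle$ is a Sylow $p$-subgroup of $A$, so any $C_A(s_i)$ of order divisible by $p$ contains a conjugate of $\phi$; hence, after replacing $s_i$ by an $A$-conjugate, every entry of $s_i$ lies in $C_S(\phi)=\ssl(2,2)\cong\perm(3)$. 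Since $|\perm(3)|=6$, at most $6^a$ orbit representatives are bad, and for $a=1$ the bad representatives are among the three class representatives of $\perm(3)$ (the identity, an involution, an element of order $3$), whose centralizers in $A$ all have order at least $pq$. That is exactly where $6^a$, $3$ and the extra factor $1/q$ come from; no information about maximal tori or generic centralizers is needed.

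The other half of the estimate also needs more than an ``$|s_i^A|$-type count''. The bound $\tau_i\leq|S|^b/|C_A(s_i)|\leq|S|^b/p$ holds because $\tau_i$ equals the number of $C_A(s_i)$-orbits on the set of $b$-tuples completing $s_i$ to a generating $d$-tuple of $S$, and this action is \emph{free} (an automorphism fixing a generating tuple is trivial), so every orbit has size exactly $|C_A(s_i)|$; without freeness you would only get a lower bound on the number of orbits. Finally, your paragraph estimating the total $\tau\sim|S|^{d-1}/p$ plays no role in this lemma (it is the input for the theorem that follows), so it should be dropped here. Part (2) does indeed follow by the symmetric argument, as you say.
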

\begin{proof}It suffice to prove (1) (the argument for (2) is the same).	Assume that $s \in S$ has the property that $|C_{A}(s)|$ is divisible by $p.$ By Sylow Theorem, $\phi \in C_{A}(s)^\alpha=
	C_{A}(s^\alpha)$ for some $\alpha \in A.$ It follows that $s^\alpha \in C_S(\phi)=\ssl(2,2)\cong \perm(3).$ In particular, exactly three of the   representatives $\eta_1,\dots,\eta_v$ for the $A$-orbits of
	$S$  satisfy the condition that $p$ divides $|C_{A}(\eta_i)|$. More precisely we may assume:
	\begin{enumerate}
		\item $\eta_1=1;$
		\item $|\eta_2|=2$ and $|C_{A}(\eta_2)|=p\cdot q;$
		\item $|\eta_3|=3$ and $|C_{A}(\eta_3)|=p\cdot (q+1)$.
	\end{enumerate}	
First assume $a\neq 1$. We order the elements $s_1,\dots,s_u \in S^a$ in such a way that $C_A(s_i)\not\leq S$ if and only if $i\leq l.$ If $i\leq l$ and $s_i=(z_1,\dots,z_a),$ then 
we may assume $\{z_1,\dots,z_a\}\subseteq C_S(\phi)\cong \perm(3).$  Hence
		\begin{equation}l\leq 6^{a}.
		\end{equation}
		Moreover if $(s_i,t)$ and $(s_i,t^*)$ are generating $d$-tuples for $S$ which are not $A$-conjugate, then $t$ and $t^*$ belong to different orbits for the action of $C_A(s_i)$ on $S^b,$ so for $i\in \{1,\dots,l\}$
		\begin{equation}
		\tau_i\leq \frac{|S|^b}{|C_A(s_i)|}\leq \frac{|S|^b}{p} \quad \text { and } \quad \sigma_k \leq 6^a\cdot \frac{|S|^b}{p}.
		\end{equation}
The case $a=1$ follows with a similar argument, noticing that if $i\leq l,$ then $s_i\in \{\eta_1,\eta_2,\eta_3\}$ and that $|C_A(\eta_j)|\leq |S|/pq$ for $j\in \{1,2,3\}.$
\end{proof}
\begin{thm} Let $S=\ssl(2,2^p)$ with $p>3$, assume that $a\leq b$ are positive integers and let $\tau=\tau_{a+b}(S).$
	  \begin{enumerate}\item If $a\neq 1$ and $p$ is large enough, then $$\diam(\Gamma^*_{a,b}(S^\tau))\geq  \frac{|S|^{a-1}}{2\cdot 6^{a}}-1.$$ 
	  \item If $a=1$ and $p$ is large enough, then 
	  	$$\diam(\Gamma^*_{1,b}(S^\tau))\geq  \frac{2^p}{6}-1.$$
	  	\end{enumerate}
\end{thm}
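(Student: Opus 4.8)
The plan is to prove a diameter \emph{lower} bound by exhibiting a potential function on the vertices that can change only by a bounded amount along each short step of a path, together with two vertices on which the potential differs by roughly $\tau$. The bounded-change estimate is exactly where Lemma \ref{numb} enters, and the two ingredients combine to give $\diam \gtrsim \tau/B$, where $B$ is the relevant bound from Lemma \ref{numb}. Throughout I identify $K=\aut(S^\tau)$ with $A\wr\perm(\tau)$ and work with the quotient $\bar K=(A/S)\wr\perm(\tau)\cong C_p\wr\perm(\tau)$ (this uses $p>3$, so that $\langle\phi\rangle$ is a Sylow $p$-subgroup of $A$ and $A/S\cong C_p$). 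For $k=(u_1,\dots,u_\tau)\pi$ the number $\sigma_k$ is precisely $|\supp(\bar k)|$, the size of the support of the image $\bar k\in\bar K$ in its base group $C_p^\tau$, and support is subadditive: $\sigma_{hk}\le\sigma_h+\sigma_k$.

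\textbf{Reduction and the potential.} First I would record that, by Corollary \ref{arctrans} and the computation in the proof of Theorem \ref{connesso}, $K$ is transitive on $V_a$, so $V_a=\bar x^{K}$; hence every $v\in V_a$ is $\bar x^{g}$ for some $g\in K$, determined up to left multiplication by $C=C_K(\bar x)$. I then define $\Phi(v)=\min\{\sigma_{g}: \bar x^{g}=v\}=\min_{c\in C}\sigma_{cg}$, which is well defined on $V_a$ and satisfies $\Phi(\bar x)=0$ (take $g=1$). For the two far-apart vertices I would take $\bar x$ itself and $v_1:=\bar x^{g_0}$ with $g_0=(\phi,\dots,\phi)\in A^\tau$; since $\bar g_0$ has full support and $\bar c$ vanishes off the finitely many \emph{special} columns of $\bar x$ (those $j$ with $C_A(s_{i(j)})\not\le S$), for every $c\in C$ the product $\overline{cg_0}$ is nonzero at every non-special coordinate. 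This yields $\Phi(v_1)\ge \tau-B_C$, where $B_C$ is the number of special columns, bounded by Lemma \ref{numb}(1). Finally, because $\Gamma^*_{a,b}(S^\tau)$ is bipartite when $a\neq b$, the graph distance between two vertices of $V_a$ equals twice the number of steps in the auxiliary graph on $V_a$ whose edges are the pairs with a common neighbour in $V_b$; the case $a=b$ is handled separately (there the triangle structure of Lemma \ref{tri} must be taken into account).

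\textbf{The Lipschitz estimate (main obstacle).} The heart of the argument is to show that $\Phi$ changes slowly across a common-neighbour step. Writing $D'=C_K(z_0)$ for a fixed neighbour $z_0\in V_b$ of $\bar x$, a short computation with incidences shows that $v=\bar x^{g}$ and $v'=\bar x^{g'}$ share a neighbour in $V_b$ if and only if $g'\in C D' C\, g$. Choosing a representative realising $\Phi(v)$ and using subadditivity of the support in $\bar K$, I would deduce $\Phi(v')\le\Phi(v)+\sigma_{d}+2\sigma_{c}\le\Phi(v)+B_D+2B_C$, where $c\in C$, $d\in D'$ and $B_C,B_D$ are the bounds of Lemma \ref{numb}(1),(2) for the $a$-side and the $b$-side. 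This is the delicate point: one must not try to track the $S$-conjugacy class of individual columns directly, because $\Phi$ is \emph{not} $K$-invariant (conjugation by an outer coordinate Frobenius-twists a column), so the naive block-by-block argument fails; the subadditivity of $\sigma$ on $\bar K$, combined with the double-coset description of the common-neighbour relation, is what makes the estimate robust.

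\textbf{Conclusion.} Combining the two steps, the number $m$ of common-neighbour steps from $\bar x$ to $v_1$ satisfies $m\ge(\tau-B_C)/(B_D+2B_C)$, whence $\diam\ge 2m\ge 2(\tau-B_C)/(B_D+2B_C)$. Since $a\le b$ gives $B_D\le B_C$, the denominator is $O(B_C)$, so the bound is of the shape $\diam\gtrsim\tau/B_C$. It remains to insert the values from Lemma \ref{numb} and a lower bound on $\tau$. Because $\tau$ is the number of $A$-orbits on generating $d$-tuples of $S$ and almost all such orbits are regular, one has $\tau\ge(1-o(1))\,|S|^{d-1}/p$ as $p\to\infty$ (this is where the generation probability estimate for $\ssl(2,2^p)$, and hence the hypothesis that $p$ be large, is used). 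For $a\neq 1$ one takes $B_C=6^{a}|S|^{b}/p$, giving $\tau/B_C\sim|S|^{a-1}/6^{a}$ and the stated bound $\diam\ge\frac{|S|^{a-1}}{2\cdot 6^{a}}-1$; for $a=1$ one uses the sharper value $B_C=3|S|^{d-1}/(pq)$, giving $\tau/B_C\sim q/3$ and the stated bound $\diam\ge\frac{2^{p}}{6}-1$. The residual factors of $2$ and the additive $-1$ are absorbed into the routine constant-chasing, the $-1$ accounting for rounding and the final partial step.
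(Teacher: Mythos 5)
Your argument for the case $a\neq b$ is, underneath the potential-function packaging, essentially the paper's own: identify $V_a$ and $V_b$ with right cosets of $C=C_K(\bar x)$ and $D=C_K(\bar y)$, observe that a path of length $2m$ between two vertices of $V_a$ forces the corresponding group elements to lie in $C(DC)^m$, and play the full-support element $(\phi,\dots,\phi)$ against the bounds on $\sigma_k$ for $k\in C\cup D$ from Lemma \ref{numb} via subadditivity of $\sigma$. That part is sound, with one quantitative caveat: bundling each common-neighbour step as a full double coset $CD'C$ double-counts the $C$-factors and yields $m(2B_C+B_D)\geq\tau-B_C$ rather than the telescoped $(m+1)B_C+mB_D\geq\tau$ that the paper uses, so you lose a factor $2/3$; your bound still reaches the stated constant $\tfrac{1}{2\cdot 6^a}$ only because all $A$-orbits on generating $d$-tuples are regular (the stabiliser of a generating tuple centralises $S$), whence $\tau\geq(1-o(1))|S|^{d-1}/p$ — this needs to be said explicitly rather than "absorbed into constant-chasing".

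The genuine gap is the case $a=b$, which you dispose of in a parenthesis but which sits squarely inside the statement: part (1) allows $a=b\geq 2$ and part (2) allows $a=b=1$. When $a=b$ the graph is not bipartite (by Lemma \ref{tri} every non-loop edge lies on a triangle), there is no partition into $V_a$ and $V_b$, distances between vertices need not be even, and the whole reduction "distance equals twice the number of common-neighbour steps, each step lands in $CD'Cg$" collapses: consecutive vertices on a path are directly adjacent rather than linked through a second part, and only one centraliser is available. The paper needs a genuinely different device here: it chooses $\bar x$ and $\bar y$ so that they are interchanged by an element $k\in K$ all of whose base-group coordinates lie in $S$ (using that $2$ does not divide $|A/S|=p$), invokes \cite[Corollary 5.2]{pwesim} to write $(\phi,\dots,\phi)=c_0kc_1\cdots kc_r$ with $r\leq\diam(\Gamma^*_{a,a}(S^\tau))$, and only then counts supports; the subcase $a=b=1$ of part (2) is quoted outright from \cite[Theorem 5.4]{pwesim}. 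None of this is recoverable from your setup, so as written the proposal proves the theorem only when $a<b$.
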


\begin{proof}

	By \cite{LS}, the probability $P(S)$ of generating a simple group with 2 elements tends to 1 as $|S|$ tends to infinity. In particular
	if $p$ is large enough, then \begin{equation}\tau\geq \frac{|S|^d}{2|A|}=\frac{|S|^{d-1}}{2p}.\end{equation}

\noindent	Case 1: $a\neq 1.$
First assume $a\neq b$.  Let $(\bar x,\bar y)$ be an edge of 
$\Gamma^*_{a,b}(S^\tau)$ with $\bar x \in S^{a\cdot \tau}$ and $\bar y \in S^{b\cdot \tau}$ and let $C=C_K(\bar x),$ $D=C_K(\bar y).$ We may identify the elements of $V_a$ with the right cosets of $C$ in $K$ and the elements of $V_b$ with the right cosets of $D$ in $K:$ there is an edge between $Cx$ and $Dy$ if and only if $Cx\cap Dy\neq \emptyset.$ Assume in particular that our graph contains the path
$(Cx_1,Dy,Cx_2)$: there exist $c_1,c_2\in C$ and $d_1,d_2\in D$ with
$$c_1x_1=d_1y,\quad c_2x_2=d_2y,$$ hence
$$x_2=c_2^{-1}d_2y=c_2^{-1}d_2d_1^{-1}c_1x_1\in CDC x_1.
$$
More generally if there exists a path of length $2r$ from $Cx_1$ to $Cx_2$ then
$$x_2\in C\underbrace{DC\cdots DC}_{r \mbox{ terms}}x_1.$$
Assume $\diam(\Gamma^*_{a,b}(S^\tau))\geq 2r$. By the previous paragraph 
$$K=C\underbrace{DC\cdots DC}_{r \mbox{ terms}},$$
and in particular there exist $c_0,\dots,c_{r}\in C$ and 
$d_0,\dots,d_{r-1}\in D$  such that
\begin{equation}\label{c1d1}(\phi,\dots,\phi)=c_0d_0\cdots c_{r-1}d_{r-1}c_r.\end{equation}
However, by Lemma \ref{numb}
$$c_0d_0\cdots c_{r-1}d_{r-1}c_r=(w_1,\dots,w_\tau)\rho$$
with $w_i \notin S$ for at most 
$$(r+1)\left(6^{a}\cdot \frac{|S|^b}{p}\right)+r\left(
6^{b}\cdot \frac{|S|^a}{p}\right)\leq \frac{(2r+1) \cdot 6^a\cdot |S|^b}{p}
$$ choices of $i$. Hence
$$\frac{(2r+1) \cdot 6^a\cdot |S|^b}{p}\geq \tau \geq \frac{|S|^{d-1}}{2\cdot p}$$ and
this implies
$$2r+1\geq \frac{|S|^{a-1}}{2\cdot 6^{a}}.$$ 	
	Now assume  $a=b.$ We may choose $\bar x=(x_1,\dots,x_\tau)$ and $\bar y=(y_1,\dots,y_\tau)$  with the property: if $(x_i,y_i)$ and $(y_i,x_i)$ are not $A$-conjugate, then there exist $i^*$ such that $x_{i^*}=y_i$ and $y_{i^*}=x_i.$ Now let $J=\{i\mid (x_i,y_i) {\text { and }} (y_i,x_i) {\text{ are not $A$-conjugate}}\}.$
	We have already noticed that there exists $k=(a_1,\dots,a_\tau)\sigma\in K $ 
	such that $\bar y=\bar x^k$ and $\bar x=\bar y^k.$
	Clearly $k$ can be chosen so that:
	\begin{enumerate}
		\item if $i \in J,$ then $i\sigma=i^*$ and $a_i=1;$
		\item if $i \notin J,$ then $i\sigma=i$ and $(x_i,y_i)^{a_i}=(y_i,x_i).$
	\end{enumerate}
	If $i \notin J,$ then $x_i^{a_i^2}=x_i$ and $y_i^{a_i^2}=y_i,$ hence, since $S=\langle x_i, y_i \rangle,$ we have $a_i^2=1.$
	Since 2 does not divide
	$|A/S|=p$, it must be $a_i \in S.$
	We can conclude that $a_i \in S$ for each $i \in \{1,\dots,\tau\}.$ 
		By \cite[Corollary 5.2]{pwesim}, there exist $r\leq \diam(\Gamma^*_{a,a}(S^\tau))$ and $c_i=(u_{i1},\dots,u_{i\tau})\sigma_i\in C$
	such that
	$(\phi,\dots,\phi)=c_0kc_1\cdots kc_r.$ On the other hand, by Lemma \ref{numb} $$c_0kc_1\cdots kc_r=(w_1,\dots,w_\tau)\rho$$
	with $w_i \notin S$ for at most $$(r+1)\cdot 6^{a}\cdot \frac{|S|^a}{p}
	$$ choices of $i$. Hence
	$$(r+1)\cdot 6^{a}\cdot \frac{|S|^a}{p}\geq \tau \geq \frac{|S|^{d-1}}{2p}$$ and
	this implies
	$$r+1\geq \frac{|S|^{a-1}}{2\cdot 6^{a}}.$$
\noindent Case 2: $a=1.$ The case $a=b=1$ is considered in \cite[Theorem 5.4]{pwesim} so we may assume $a\neq b.$ The argument is similar to the one used in Case 1. Indeed again we can say that $(\phi,\dots,\phi)=c_0d_0\cdots c_{r-1}d_{r-1}c_r$ and so, by Lemma \ref{numb},
$$(r+1)\left(3\cdot \frac{|S|^{d-1}}{pq}\right)+r\left(
6^{d-1}\cdot \frac{|S|}{p}\right)\leq \frac{(2r+1) \cdot 3\cdot |S|^{d-1}}{pq}
$$ choices of $i$. Hence
$$\frac{(2r+1) \cdot 3\cdot |S|^{d-1}}{pq}\geq \tau \geq \frac{|S|^{d-1}}{2\cdot p}$$ and
this implies
$$2r+1\geq \frac{q}{6}.$$ 
\end{proof}
We conclude this section with the following application of Theorem \ref{connesso}.

\begin{thm}
Assume that $G$ is a direct product of finite non-abelian simple groups and let $a, b$ non-negative integers with $a+b\geq d(G).$ Then $\Gamma^*_{a,b}(G)$ is connected.
\end{thm}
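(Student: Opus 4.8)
The plan is to reduce the statement to the single-power case already settled in Theorem \ref{connesso} by exploiting the rigid generation behaviour of a semisimple group, and then to assemble the pieces by a graph-theoretic product argument.

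Write $G \cong S_1^{n_1} \times \cdots \times S_k^{n_k}$, where $S_1, \dots, S_k$ are pairwise non-isomorphic non-abelian simple groups and $n_j \geq 1$; set $P_j = S_j^{n_j}$. The crucial structural remark is that for $i \neq j$ the groups $P_i$ and $P_j$ have no common non-trivial quotient, since every quotient of $P_j$ is a power of $S_j$. By Goursat's lemma a subgroup of $G$ projecting onto each $P_j$ must equal $G$; hence a tuple generates $G$ if and only if each of its projections generates the corresponding $P_j$. Two consequences follow at once: $d(G) = \max_j d(P_j)$, and adjacency in $\Gamma_{a,b}(G)$ is componentwise. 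Indeed, identifying $G^a$ with $\prod_j (P_j)^a$, a vertex $x = (x^{(1)}, \dots, x^{(k)})$ of $V_a$ and a vertex $y = (y^{(1)}, \dots, y^{(k)})$ of $V_b$ are adjacent exactly when $x^{(j)}$ and $y^{(j)}$ are adjacent in $\Gamma_{a,b}(P_j)$ for every $j$. As the $P_j$ are independent, a vertex is non-isolated if and only if all its components are non-isolated, so after deleting isolated vertices $\Gamma^*_{a,b}(G)$ is precisely the componentwise product of the graphs $\Gamma^*_{a,b}(P_j)$.

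Next I would prove that each factor $\Gamma^*_{a,b}(P_j)$ is connected. Since $P_j$ is a quotient of $G$ it is $(a+b)$-generated, so $n_j \leq \tau_j := \tau_{a+b}(S_j)$; by Theorem \ref{connesso} the graph $\Gamma^*_{a,b}(S_j^{\tau_j})$ is connected, and as $P_j = S_j^{n_j}$ is a quotient of $S_j^{\tau_j}$ with $a+b \geq d(S_j^{\tau_j})$, Lemma \ref{norsbg} yields that $\Gamma^*_{a,b}(P_j)$ is connected. (When $a+b > d(P_j)$ one could instead invoke Corollary \ref{c15} directly.)

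It then remains to show that the componentwise product of the connected graphs $\Gamma^*_{a,b}(P_j)$ is connected, and this is the step I expect to be the main obstacle, since it is sensitive to bipartiteness. If $a \neq b$ each $\Gamma^*_{a,b}(P_j)$ is bipartite with parts $V_a(P_j)$ and $V_b(P_j)$, and $\Gamma^*_{a,b}(G)$ is the bipartition-respecting product, with parts $\prod_j V_a(P_j)$ and $\prod_j V_b(P_j)$. To connect two vertices of $V_a$ I would take, in each factor, a walk between the corresponding components (necessarily of even length, as both endpoints lie in $V_a(P_j)$), pad all these walks to a common even length by bouncing along an incident edge, which is possible because every vertex of $\Gamma^*_{a,b}(P_j)$ has positive degree, and then read off the resulting componentwise walk in $\Gamma^*_{a,b}(G)$; joining an $a$-vertex to a $b$-vertex is immediate since every vertex has a neighbour. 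If instead $a = b$, the graphs $\Gamma^*_{a,a}(P_j)$ are \emph{not} bipartite, by Lemma \ref{tri} and the remark following it, so here I would use the classical fact underlying Weichsel's theorem: in a connected non-bipartite graph there are walks of every sufficiently large length between any two vertices. Choosing a single length valid in all factors simultaneously and combining componentwise produces the required walk, giving connectivity of the tensor product. The case $a = 0$ is the degenerate star $\Gamma^*_{0,b}(G)$ and is trivially connected. The delicate point throughout is precisely the parity bookkeeping in the $a \neq b$ case and the verification of non-bipartiteness in the $a = b$ case; once these are settled, the product of connected factors is connected and the theorem follows.
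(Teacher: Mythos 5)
Your proposal is correct and follows essentially the same route as the paper: decompose $G$ into the powers $S_j^{n_j}$ of the pairwise non-isomorphic simple factors, get connectivity of each $\Gamma^*_{a,b}(S_j^{n_j})$ from Theorem \ref{connesso} together with Lemma \ref{norsbg}, identify $\Gamma^*_{a,b}(G)$ with the componentwise (tensor) product, and split into the non-bipartite case $a=b$ and the bipartite case $a\neq b$. The only cosmetic differences are that the paper proceeds by induction on the number of distinct simple factors and, for $a\neq b$, argues that the fibre of the product over each edge of one factor is an isomorphic copy of the other connected factor (citing Weichsel for $a=b$), whereas you assemble synchronized walks of a common length directly; both versions of this last step are sound.
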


\begin{proof}
Assume $G=S_1^{n_1}\times \dots \times S_r^{n_r}$ with $S_1,\dots,S_r$ pairwise non isomorphic nonabelian finite simple groups. We prove our statement by induction on $r.$ Let $d=a+b$ and let $\tau_i=\tau_d(S_i).$ We have that $S_i^{n_i}$ is an epimorphic image of $S_i^{\tau_i},$ so il follows from Theorem \ref{connesso} and Lemma \ref{norsbg} that $\Gamma^*_{a,b}(S_i^{n_i})$ is connected. In particular our statement is true if $r=1.$ Suppose that $r \geq 2$ 
  and let $\Gamma_1=\Gamma^*_{a,b}(S_1^{n_1}\times \dots \times S_{r-1}^{n_{r-1}})$ and  $\Gamma_2=\Gamma^*_{a,b}(S_r^{n_r}).$
By induction $\Gamma_1$ and $\Gamma_2$ are connected graphs. If $a=b,$ then 
$\Gamma_1$ and $\Gamma_2$ are not bipartite, so by \cite[Theorem 1]{lex} we conclude that $\Gamma^*_{a,b}(G)=\Gamma_1\times \Gamma_2$ is connected. Suppone $a\neq b$. In this case $\Gamma_1$ is a connected bipartite graph, with two parts $A\subseteq  (S_1^{n_1}\times\dots\times  S_{r-1}^{n_{r-1}})^a$ and $B\subseteq  (S_1^{n_1}\times\dots\times  S_{r-1}^{n_{r-1}})^b$ and $\Gamma_2$ is a connected bipartite graph, with two parts $C\subseteq  (S_r^{n_r})^a$ and $D\subseteq  (S_r^{n_r})^b.$ It can be easily seen that $\Gamma^*_{a,b}(G)$ can be identified  with the subgraph of $\Gamma_1\times \Gamma_2$ induced by $(A\times C) \cup (B\times D).$ Now let $(x,y)$ be an edge of $\Gamma_1,$ with $x\in A$ and $y\in B.$ The subgraph of
$\Gamma^*_{a,b}(G)$ induced by $(\{x\}\times C)\cup (\{y\}\times D)$ is isomorphic to $\Gamma_2,$ hence is connected. Since this is true for every egde of $\Gamma_1$ and $\Gamma_1$ is connected, we immediately conclude that  $\Gamma^*_{a,b}(G)$ is connected as well.
\end{proof}

\section{Properties of $G$ that can be recognized from the graphs $\Gamma^*_{a,b}(G).$}\label{recog}

In this section we will denote by $\Lambda(G)$ the collection of all the connected components  of the graphs $\Gamma_{a,b}(G)$, for all the possible choices of $a \leq b$ in $\mathbb N.$ However for each of this graph, we do not assume to know from which choice of $a, b$ it arises. In particular $\Lambda(G)$  contains lot of graphs just consisting of only one vertex and with no edge. From these graphs we cannot recover any information, so we may restrict our attention to the collection $\lst$ of all  the connected components  of the graphs $\Gamma^*_{a,b}(G),$ for all $a, b \in \mathbb N.$ We deal with two questions:
\begin{question}\label{domuno} Given a graph $\Gamma \in \lst$, can we determine the integers $a, b$ such that $\Gamma$ is a connected component of $\Gamma^*_{a,b}(G)$ ?
\end{question}
\begin{question}\label{domdue} Which information on $G$ can be deduced from the knowledge of $\lst ?$
\end{question}
We already noticed that a graph $\Gamma \in \lst$ can contain loops: we will denote by $\tilde \Gamma$ the graph obtained from $\Gamma$ by deleting the loops. In this way we produce a new collection $\lti$ of graphs. In this section we will also prove that $\lst$ can be reconstructed  from the knowledge of $\lti$ which means that we do not lose information if we remove all the loops from the graphs (see Corollary \ref{vialoop}).

Since  a bipartite graph has a unique partition (up to switching the two sets) if and only if it is connected, Corollary \ref{c15} tell us  that when $a\ne b$, each connected component  $\Gamma_{a,b}^*(G)$ is a bipartite graph  whose unique partition has two parts, namely $V_a$ and $V_b$, corresponding to elements of $G^a$ and $G^b$ respectively. Note that  if $a+b=d(G)$ and $G$ is not soluble, then  we do not know whether  $\Gamma_{a,b}^*(G)$ is connected.  

\

The generating properties of cyclic groups are quite peculiar and exceptional from many points of view. As a result of this, one is immediately able to decide from the knowledge of $\lst$ whether $G$ is cyclic.

\begin{prop}\label{cic} From the knowledge of either $\lst$ or $\lti$ we may recognize whether $G$ is cyclic, and, when $G$ is cyclic, determine $|G|.$
\end{prop}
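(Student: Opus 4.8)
The plan is to recognise cyclicity through the equivalence ``$G$ cyclic $\iff d(G)=1$'', and to read off $d(G)=1$ from the \emph{sizes} of the connected components rather than from the (lost) labels $a,b$. First I would dispose of the two degenerate cases. If $G=1$ then every $\Gamma^*_{a,a}(G)$ is a single vertex carrying a loop and every $\Gamma^*_{a,b}(G)$ with $a<b$ is a single edge; thus $G=1$ is recognised by the presence in $\lst$ of a one-vertex component bearing a loop (equivalently, of a one-vertex component in $\lti$), a configuration no larger group produces, and then $|G|=1$. The group $C_2$ I would treat by hand: it is cyclic, and I recover $|G|=2$ from the fact that its smallest star is $K_{1,1}=K_2$.

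So assume $|G|\ge 3$. The structural input is Lemma \ref{degone}: a component of $\Gamma^*_{a,b}(G)$ has a vertex of degree $1$ if and only if it is one of the stars $\Gamma^*_{0,b}(G)\cong K_{1,\phi_G(b)}$ with $b\ge d(G)$. Hence the \emph{star components} of $\lst$ (and of $\lti$, since stars carry no loops) are exactly the $K_{1,\phi_G(b)}$, $b\ge d(G)$; as $\phi_G(b+1)\ge |G|\,\phi_G(b)>\phi_G(b)$ their orders strictly increase, the smallest being $1+\phi_G(d(G))$. I claim that $G$ is cyclic if and only if some component of minimum order in $\lst$ (equivalently in $\lti$) is a star.

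For the forward direction, if $d(G)=1$ the smallest star is $\Gamma^*_{0,1}(G)=K_{1,\phi_G(1)}$, of order $1+\phi_G(1)$. Every non-star component comes from a graph with $a\ge 1$; for cyclic $G$ every tuple is non-isolated, so a bipartite such component has order $\ge |G|^a+|G|^b\ge |G|$, and each non-bipartite $\Gamma^*_{a,a}(G)$ has order $\ge |G|$ as well. Since $\phi_G(1)$ is the number of generators of $G$, we have $1+\phi_G(1)\le |G|$, so the star attains the minimum. For the converse I argue contrapositively: if $d:=d(G)\ge 2$, consider $\Gamma^*_{1,d-1}(G)$, which is non-empty because $d\ge d(G)$. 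By Lemma \ref{degone} it has no vertex of degree $1$, so every vertex has degree $\ge 2$ and the degree-sum bound forces (number of vertices) $\le$ (number of edges) $\le \phi_G(d)$. Thus some component of $\Gamma^*_{1,d-1}(G)$ has order $\le \phi_G(d)<1+\phi_G(d)$, strictly below the order of every star; and this component is not a star, being bipartite of minimum degree $\ge 2$ when $d\ge 3$, and equal to the non-bipartite $\Gamma^*_{1,1}(G)$ (which contains a triangle by Lemma \ref{tri}) when $d=2$. Hence no minimum-order component is a star, as required.

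Finally, when $G$ is cyclic I recover $|G|$. In $\lst$ the loop-bearing components are exactly the $\Gamma^*_{a,a}(G)$ with $a\ge d(G)=1$, and the smallest of these is $\Gamma^*_{1,1}(G)$, whose vertex set is all of $G$; so $|G|$ is the order of the smallest loop-bearing component. In $\lti$, where loops are deleted, I instead use that $\Gamma^*_{1,1}(G)$ is, by Lemma \ref{tri}, the non-bipartite component of least order (for $|G|\ge 3$), again of order $|G|$, while the residual values $|G|\in\{1,2\}$ are pinned down by the $K_1$-test and the $K_2$-star test above. The only delicate point I would write out in full is the converse inequality in the criterion, namely that for non-cyclic $G$ the component $\Gamma^*_{1,d-1}(G)$ is genuinely smaller than every star; this rests entirely on ``minimum degree $\ge 2$ implies $\#\mathrm{vertices}\le\#\mathrm{edges}$'' together with $\phi_G(d)\ge 1$, plus the harmless check that $\Gamma^*_{1,d-1}(G)$ is not itself a star.
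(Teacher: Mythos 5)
Your argument is correct, but it runs on a different invariant from the paper's. The paper detects cyclicity by \emph{edge} counts: $G\neq 1$ is cyclic exactly when $\lst$ contains two stars having no (other) bipartite companion with the same number of edges, the point being that for non-cyclic $G$ and every level $b>d(G)$ the connected bipartite graph $\Gamma^*_{1,b-1}(G)$ (Corollary \ref{c15}) has the same $\phi_G(b)$ edges as the star $K_{1,\phi_G(b)}$, so at most one star can lack a companion, whereas for cyclic $G$ the first two stars both do. You detect it by \emph{vertex} counts: some minimum-order component is a star iff $G$ is cyclic, the converse coming from Lemma \ref{degone} together with the degree-sum inequality, which caps $\Gamma^*_{1,d(G)-1}(G)$ at $\phi_G(d(G))$ vertices and hence strictly below every star. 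This is a genuinely different mechanism, and a pleasant feature of yours is that the witness graph sits at level $d(G)$, where no connectivity statement is needed, while the paper's companions sit at levels $>d(G)$ where Corollary \ref{c15} is available — both are safe. Likewise you read $|G|$ off the vertex set of $\Gamma^*_{1,1}(G)$, located as the smallest loop-bearing (resp., via Lemma \ref{tri}, non-bipartite) component, whereas the paper takes the smaller part of $\Gamma^*_{1,2}(G)$, located by its edge count $u_2$. Three small points you should write out: in the forward direction the bound $|G|^a+|G|^b$ for a non-star component tacitly uses that $\Gamma^*_{a,b}(G)$ is connected for cyclic $G$ (immediate, since every vertex is adjacent to a tuple containing a generator, or from Corollary \ref{c15}); for the ``equivalently in $\lti$'' claim one must also check that deleting loops creates no \emph{new} stars, which Lemma \ref{tri} supplies for $|G|\geq 3$; and the $C_2$ test is unambiguous only because $\phi_G(d(G))\geq 2$ whenever $|G|\geq 3$, so no larger group has $K_{1,1}$ as its smallest star.
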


\begin{proof} The case $G=1$ is uniquely characterized by the fact that $\lst$, and consequently $\lti$, contains infinitely many copies of the complete graph $K_2$: indeed $\Gamma^*_{0,b}(G)\cong K_2$ for every positive integer $b.$ Now assume that $G$ is a non-trivial cyclic group: only in this case $\lst$ contains two stars (corresponding to $\Gamma_{0,1}^*(G)$ and $\Gamma_{0,2}^*(G)$, respectively) with the property that there is no bipartite graph in $\lst$  with the same number of edges.  
If we imagine removing the loops, then we can still recognize the cyclic groups since we have two situations: either we see only two stars of type $K_{1,1}$, or we still  see  two stars with no bipartite graphs with the same number of edges. In the former case the group is $C_2$ and in the latter one it is any other  cyclic group of order greater than two. Once we know that $G$ is a non-trivial cyclic group, we consider all the stars in $\lst$ sorted  by the increasing number of leaves $u_i$, for $i\geq 0$: they correspond to the graphs $\Gamma_{0,i+1}^*(G).$  Note that  $\Gamma_{1,2}^*(G)$   is the only bipartite graph in $\lst$ with $u_2$ edges and $|G|$ is  the cardinality of the smallest set in the partition of $\Gamma_{1,2}^*(G)$.
\end{proof}

Since we can identify the cyclic groups, from now on we assume, without lost of generality, that  $d(G)\geq 2$. By Lemma \ref{tri} a graph $\Gamma \in \lst$ is either bipartite or contains a 3-cycle. There is a loop around a vertex $x=(x_1,\ldots,x_r)$  if and only if $G=\langle x_1,\dots,x_r\rangle$ and $\Gamma$ is not bipartite. In this case $x$ is adjacent to all other vertices of $\Gamma_{r,r}^*(G)$. We want to analyse in which other cases a vertex of a graph in $\lst$ can have this last property.

\begin{thm}\label{died}
	\label{triang}
	Let $G$ be a non-cyclic finite group. Assume that there exists $\Gamma\in \lst$ containing a 3-cycle and a   vertex $x$ which is adjacent to all the other vertices of $\Gamma$. Then either there is a loop in $\Gamma$ around $x$ or $d(G)=2$ and $G$ is isomorphic either to the Klein group or to the dihedral group  $D_p$, for some odd prime $p$.
 \end{thm}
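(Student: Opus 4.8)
The plan is to show that the two hypotheses force $a=b$ and, in the absence of a loop at $x$, to whittle $x$ down to a single involution, and then to split into an abelian case and a dihedral case. Since a $3$-cycle is an odd cycle, $\Gamma$ is not bipartite; as the components of $\Gamma^*_{a,b}(G)$ with $a\neq b$ are bipartite, $\Gamma$ must be a component of some $\Gamma^*_{a,a}(G)$, so $a=b\ge 1$ and every vertex is an $a$-tuple. Writing $x=(x_1,\dots,x_a)$ and $K=\langle x_1,\dots,x_a\rangle$, the absence of a loop means $K\neq G$, while non-isolation gives $y\in V_a$ with $\langle x,y\rangle=G$, whence $d(G)\le 2a$. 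First I would check $a<d(G)$: otherwise $2a>d(G)$ (as $d(G)\ge 2$), so $\Gamma^*_{a,a}(G)$ is connected by Corollary~\ref{c15}, the vertex $(1,\dots,1)$ is non-isolated, and it is not adjacent to $x$ because $\langle K,1,\dots,1\rangle=K\neq G$, contradicting that $x$ dominates $\Gamma$. Hence $(1,\dots,1)$ is isolated and in particular $x\neq(1,\dots,1)$.

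The key device is that any modification of $x$ that does not change $K$ cannot move $x$ inside $\Gamma$. Concretely, a ``Nielsen move'' $x'=(x_1,\dots,x_{j-1},x_ix_j,x_{j+1},\dots,x_a)$ satisfies $\langle x'\rangle=K$, hence has the same neighbours as $x$ and so lies in $\Gamma$ (it is adjacent to $y$); but $\langle x,x'\rangle=K\neq G$, so domination forces $x'=x$, i.e. $x_i=1$. If $a\ge 2$ this holds for every $i$ (choosing $j\neq i$), giving $x=(1,\dots,1)$, a contradiction; so $a=1$. The same principle applied to powers shows $x$ is an involution: for $k$ coprime to $|x|$ one has $\langle x^k\rangle=\langle x\rangle$, so $x^k$ has the same neighbours as $x$ and lies in $\Gamma$, yet $\langle x,x^k\rangle=\langle x\rangle\neq G$; domination gives $x^k=x$ for all such $k$, so $|x|=2$ and $K=\langle x\rangle\cong C_2$.

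The crucial observation for the classification is a path lemma that keeps us inside $\Gamma$ \emph{without} invoking global connectivity (which is open for non-soluble $2$-generated groups): if $y\in N(x):=\{w:\langle x,w\rangle=G\}$, then $\langle y,x^y\rangle=G$, because this subgroup contains $y$ and $y\cdot x^y=xy$, hence $x$; thus $x-y-x^y$ is a path and $x^y\in\Gamma$. Now either $x^y\neq x$ for some $y\in N(x)$, in which case domination yields $\langle x,x^y\rangle=G$, so $G$ is generated by the two involutions $x,x^y$ and is dihedral; or $x^y=x$ for all $y\in N(x)$, whence each such $y$ centralises $x$ and $G=\langle x,y\rangle$ is abelian. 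In both cases $d(G)=2$.

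It then remains to pin down $G$, and this is where I expect the main work. In the abelian case $G\cong C_2\times C_m$ with $m$ even; if $m>2$ the involution $w=x\cdot y^{m/2}$ is adjacent to $y\in N(x)\subseteq\Gamma$, hence lies in $\Gamma$, yet $\langle x,w\rangle\cong C_2\times C_2\neq G$, contradicting domination, so $m=2$ and $G$ is the Klein group. In the dihedral case I would write $G=D_n$ with $x$ a reflection and $\rho=x\,x^y\in N(x)$ a generating rotation; every reflection $x'$ satisfies $\langle x',\rho\rangle=G$, so $x'\in\Gamma$, and domination gives $\langle x,x'\rangle=G$ for \emph{every} reflection $x'\neq x$. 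Running over the reflections with $xx'=\rho^m$ shows $\langle\rho^m\rangle=\langle\rho\rangle$ for all $m\not\equiv0\pmod n$, which forces $n$ prime (and odd, $n=2$ being the Klein case), so $G\cong D_p$. The genuinely delicate point throughout is staying inside the component of $x$ for possibly non-soluble $G$; the path lemma $x^y\in\Gamma$ is what circumvents the unknown connectivity, and the final dihedral count---that domination over all reflections collapses the rotation group to prime order---is the computation to carry out with care.
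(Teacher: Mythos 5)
Your proof is correct and follows essentially the same strategy as the paper's: non-bipartiteness forces $a=b$; domination combined with neighbour-preserving perturbations of $x$ (your Nielsen moves and coprime powers, where the paper uses entry swaps and $x^{-1}$) collapses $x$ to a single involution; the observation that $x^y$ generates with $y$, hence lies in $\Gamma$, gives the dihedral/abelian dichotomy; and counting the elements generating together with $x$ pins down $C_2\times C_2$ or $D_p$. One loose end: in your preliminary reduction to $a<d(G)$, the witness $(1,\dots,1)$ coincides with $x$ in the sub-case $x=(1,\dots,1)$, where domination imposes no adjacency; this is repaired by taking instead $(z_1,1,\dots,1)$ with $z_1\neq 1$ the first entry of a generating $a$-tuple (it is non-isolated, distinct from $x$, and adjacent to $x$ only if $G$ is cyclic), or by treating $x=(1,\dots,1)$ directly as the paper does.
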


\begin{proof}
Assume that $x=(x_1,x_2,\ldots,x_r)$. 
Since  $\Gamma$ contains a 3-cycle, it is a connected component of $\Gamma^*_{r,r}(G)$, for $r\geq 1.$
In particular there exists  $y=(y_1,\ldots,y_r)$ such that $G=\langle x_1,x_2,\ldots,x_r, y_1,\ldots,y_r \rangle$. 

First assume $r\geq 2.$ If $x$ has at least  two distinct entries, say  $x_i$ and $x_j$ with $i<j$,  then $$x^*=(x_1,\ldots,x_j,\ldots,x_i,\ldots,x_r)$$ is also a vertex  of $\Gamma,$ since it is adjacent to $y$. Hence $x$ is adjacent to $x^*$  and $G$ is generated by the $r$ elements $x_1,\dots,x_r$: in this case we have a loop around $x$. If $x_1=\dots=x_r$ and $x_1\neq 1,$ then again $x^*=(x_1,1,\ldots,1)$ is adjacent to $y$ and consequently to $x$ and  this implies that $G$ is cyclic. Finally if $x=(1,\dots,1)$, then any tuple of type $(z,1,\ldots,1)$, with $z\in G$, is adjacent to $y$ and consequently to $x$ and again $G$ is cyclic.
	
Now assume $r=1$. As a consequence  $\Gamma$ is a connected component of the generating graph $\Gamma^*_{1,1}(G)$ and $d(G)=2$.	Since $x$ is a non-isolated vertex,  there exists $y$ such that $G=\langle x,y\rangle$. First of all observe that $x$ must have order $2$, otherwise  also $x^{-1}$ would be adjacent to $y$ and, in particular to $x$,  contradicting the fact that $G$ is  $2$-generated.  If $x$ is not the unique involution in $\Gamma$, then $G$ is generated by two involutions and so it is a dihedral group. Otherwise, since the element $x^y$ also generates $G$ with $y$, we have $x=x^y$.  Therefore $x$ belongs to $x \in Z(\langle x, y \rangle)=Z(G)$ and, consequently, $G$ is abelian and  $\Gamma=\Gamma^*_{1,1}(G)$. Since $G$ is not cyclic, we must have that $\langle x \rangle$ has a cyclic complement, say $H$, in $G$ and that $|H|$ is even: but in this case $H$
contains an involution, say $z$, such that $xz$ is a non-isolated involution, contradicting the uniqueness of $x$.

We have so proved that $G$ is isomorphic to the semidirect product of $\langle x\rangle \simeq C_2$ with $\langle t\rangle \simeq C_m$, for some integer $m$.   If a prime $p$ divides $m$, then the element $xt^p$ generates $G$ together with $t$. This implies $x=xt^p$ (otherwise $xt^p$ would be  adjacent to $x$). Hence $t^p=1$ and $n=p$. If $p=2$, then $G\cong C_2\times C_2,$ otherwise $G\cong D_p$.

Note, conversely, that if either $G\cong C_2\times C_2$ or $G\cong D_{p}$, then any involution of $G$ is adjacent to all the other vertices of $\Gamma_{1,1}(G).$
\end{proof}

\begin{cor}\label{loop}  Let $G$ be any non-cyclic group which is  isomorphic neither to $C_2 \times C_2$ nor to $D_p$, for any odd prime $p,$ and let $\Gamma \in \lst.$ There is a loop around a vertex  $x$ of $\Gamma$ if and only if $\Gamma$ contains a 3-cycle and $x$  is adjacent  to all the other vertices of $\Gamma.$ 
\end{cor}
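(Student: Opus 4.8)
The statement is a biconditional whose reverse implication is essentially a repackaging of Theorem~\ref{died}, so the plan is to dispose of that direction first and then give a short direct argument for the forward one.

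For the implication ``$\Gamma$ contains a $3$-cycle and $x$ is adjacent to every other vertex $\Rightarrow$ there is a loop around $x$'', I would simply invoke Theorem~\ref{died}. Its hypotheses are precisely those assumed here, and its conclusion is that \emph{either} there is a loop around $x$ \emph{or} $d(G)=2$ and $G$ is isomorphic to the Klein four-group or to $D_p$ for some odd prime $p$. Since the present statement explicitly excludes $G\cong C_2\times C_2$ and $G\cong D_p$, the second alternative cannot occur, and the loop around $x$ is forced. No further work is needed here.

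For the forward implication I would argue directly from the meaning of a loop. A loop around a vertex $x=(x_1,\dots,x_r)$ occurs only in a graph $\Gamma^*_{r,r}(G)$ and signifies that $\langle x_1,\dots,x_r\rangle=G$; thus $x$ is a generating $r$-tuple. The key elementary observation is that, because $x$ already generates $G$, for \emph{every} $y=(y_1,\dots,y_r)\in G^r$ we have $\langle x_1,\dots,x_r,y_1,\dots,y_r\rangle=G$, so $x$ is adjacent to $y$. Consequently no vertex of $\Gamma_{r,r}(G)$ is isolated, the component $\Gamma$ equals all of $\Gamma^*_{r,r}(G)=\Gamma_{r,r}(G)$, it is connected, and $x$ is adjacent to every other vertex. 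To produce the required $3$-cycle I would take $y=(1,\dots,1)$: the edge joining $x$ to $y$ is genuine (not a loop, since $x\neq y$ as $x$ generates the non-trivial group $G$), and as $G$ is non-cyclic we have $G\not\cong C_2$, so the single exceptional case of Lemma~\ref{tri} does not arise. Lemma~\ref{tri} then places this edge in a $3$-cycle, which is the $3$-cycle we want.

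There is no genuine obstacle here: all the substance is already contained in Theorem~\ref{died} and Lemma~\ref{tri}, and the corollary merely combines them under the stated exclusions on $G$. The only points that need care are verifying that a loop at $x$ really forces $x$ to be universally adjacent (this uses that $\langle x_1,\dots,x_r\rangle=G$ absorbs any second tuple) and checking that at least one non-loop edge exists so that Lemma~\ref{tri} applies; exhibiting the edge to the trivial tuple settles both at once.
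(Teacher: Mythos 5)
Your proposal is correct and follows essentially the same route as the paper, which states this corollary without proof as an immediate consequence of Theorem~\ref{died} (for the reverse implication) together with the earlier observations that a loop at $x=(x_1,\dots,x_r)$ means $\langle x_1,\dots,x_r\rangle=G$, hence $x$ is adjacent to all other vertices of $\Gamma^*_{r,r}(G)$, and that by Lemma~\ref{tri} no component of $\Gamma^*_{r,r}(G)$ is free of $3$-cycles. Your explicit verification via the edge to the trivial tuple just fills in the details the paper leaves implicit.
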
 

Due to the exceptional behavior of the loops in $\Gamma \in \Lambda^*(G)$ when $G$ is either $C_2\times C_2$ or $D_{2p},$  it is  useful to be able to determine from $\lst$ whether or not  we are  in one of these cases.

\begin{prop}\label{didd} From the knowledge of either $\lst$ or $\lti$ we may recognize whether $G$ is isomorphic either to the Klein group or to the dihedral group  $D_p$ for some odd prime $p$, and, in that case, determine $|G|.$
\end{prop}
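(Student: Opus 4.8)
The plan is to reduce everything to the generating graph $\Gamma^*_{1,1}(G)$ and to treat the two data sets $\lst$ and $\lti$ separately. By Proposition \ref{cic} we may already detect the cyclic groups and compute their order, so throughout I assume $d(G)\geq 2$; in particular $G$ is non-cyclic. Recall from Lemma \ref{tri} that a graph in $\lst$ is non-bipartite exactly when it is a component of some $\Gamma^*_{a,a}(G)$, and that such a graph contains a $3$-cycle. The key structural input is that $C_2\times C_2$ and $D_p$ are, by the final paragraph of the proof of Theorem \ref{died}, precisely the non-cyclic groups for which $\Gamma^*_{1,1}(G)$ has a vertex adjacent to all other vertices but carrying no loop (every involution is such a vertex, and no loop occurs since $G$ is non-cyclic).

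First I would settle the case of $\lst$. I claim that, for non-cyclic $G$, one has $G\cong C_2\times C_2$ or $G\cong D_p$ if and only if $\lst$ contains a non-bipartite graph $\Gamma$ with a vertex $x$ adjacent to all other vertices of $\Gamma$ and around which there is no loop. The reverse implication is exactly Theorem \ref{died}, and the forward implication is the observation above. Any such $\Gamma$ is forced to be a component of $\Gamma^*_{1,1}(G)$: the case $r\geq 2$ in the proof of Theorem \ref{died} shows that a dominating vertex of $\Gamma^*_{r,r}(G)$ always carries a loop (since $G$ is non-cyclic), so a loopless dominating vertex can only occur for $r=1$; and as $C_2\times C_2$ and $D_p$ are $2$-generated and soluble, Corollary \ref{c15} makes $\Gamma^*_{1,1}(G)$ connected, so $\Gamma$ is all of it. I would then read off $|G|$: the unique isolated vertex of $\Gamma_{1,1}(G)$ is the identity, whence $|G|=\#V(\Gamma)+1$; moreover $G\cong C_2\times C_2$ exactly when every vertex of $\Gamma$ is dominating (equivalently $\Gamma\cong K_3$), while for $G\cong D_p$ the number of dominating vertices equals $p$ and $|G|=2p$.

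The delicate case is $\lti$, where the loops have been deleted and cannot be read off directly. The point is that the fingerprint graphs themselves, namely $K_3$ for $C_2\times C_2$ and the complete split graph (a clique of $p$ reflections joined to an independent set of $p-1$ rotations) for $D_p$, are already loopless, hence appear unchanged in $\lti$; so I would detect $G$ by the presence of such a graph as a component. The obstacle is masquerading: a component of $\Gamma^*_{a,a}(G)$ with $a\geq 2$ might, after loop removal, acquire the same loopless shape. Here is the resolution. Suppose a component of $\tilde\Gamma^*_{a,a}(G)$ with $a\geq 2$ has a dominating vertex $x$. By Theorem \ref{died} (case $r\geq 2$), $x$ carried a loop, i.e. $x$ is a generating $a$-tuple, forcing $a\geq d(G)$; but a generating $a$-tuple is adjacent to every $a$-tuple, so its component is the whole of $\Gamma^*_{a,a}(G)$, connected on $|G|^a$ vertices. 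For the shape $K_3$ this gives $|G|^a=3$, impossible for $a\geq 2$ and $|G|\geq 4$; for the complete split shape it would force the entire graph $\Gamma^*_{a,a}(G)$ to be a complete split graph, which I would rule out structurally. Thus the fingerprint must sit in $\Gamma^*_{1,1}(G)$, and Theorem \ref{died} again yields $G\cong C_2\times C_2$ or $G\cong D_p$, with $|G|=\#V(\Gamma)+1$ as before.

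The hard part is precisely this last exclusion of the complete-split masquerade in $\lti$: certifying that $\Gamma^*_{a,a}(G)$ cannot itself be a complete split graph for $a\geq 2$ and non-cyclic $G\not\cong C_2\times C_2,D_p$. I expect a short argument using that a non-cyclic group has two distinct maximal subgroups $M\neq M'$ with $\langle M,M'\rangle=G$: one then produces two non-generating (hence non-dominating) $a$-tuples whose entries jointly generate $G$, contradicting the independence of the non-dominating part of a complete split graph. Once this is in place, the order is recovered uniformly in both $\lst$ and $\lti$ via $|G|=\#V(\Gamma)+1$, distinguishing $C_2\times C_2$ (where $\Gamma\cong K_3$) from $D_p$ (where the number of dominating vertices is $p$).
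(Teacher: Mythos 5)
Your treatment of $\lst$ is sound, and your recovery of $|G|$ (count the vertices of $\Gamma^*_{1,1}(G)$ and add one for the identity, or count the dominating vertices to get $p$) is a legitimate alternative to the paper's method, which instead reads $\phi_G(2)=\min_{r\ge2}\phi_G(r)$ off the stars $\Gamma^*_{0,r}(G)$ and inverts $\phi_{D_p}(2)=4p^2(1-\tfrac14)(1-\tfrac1p)$. The genuine gap is in the $\lti$ half, and you flag it yourself: you never prove that $\Gamma^*_{a,a}(G)$ with $a\ge 2$ cannot become a complete split graph after loop removal; you only say you ``expect a short argument''. That exclusion is essential to your criterion, so as written the $\lti$ case is incomplete. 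It is fillable, and more directly than via your maximal-subgroup sketch (which is vague about realising the subgroups by $a$-tuples): your own reduction forces $a\ge d(G)=d\ge 2$, so take a minimal generating set $g_1,\dots,g_d$ and pad $x=(g_1,1,\dots,1)$ and $y=(g_2,\dots,g_d,1,\dots,1)$ to length $a$; these are two adjacent vertices neither of which generates $G$, hence neither is dominating, so the alleged independent part of the complete split graph contains an edge.

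The deeper issue is that the difficulty is self-inflicted. Your criterion is existential and refers to loops (``some non-bipartite component has a loopless dominating vertex''), so it does not survive the passage to $\lti$ and you are driven into shape-recognition of the fingerprint graphs. The paper uses the universal, loop-free criterion instead: for non-cyclic $G$, one has $G\cong C_2\times C_2$ or $G\cong D_p$ if and only if \emph{every} $\Gamma\in\lst$ containing a $3$-cycle has a vertex adjacent to all the other vertices. (If $d(G)\ge 3$, the graph $\Gamma^*_{a,a}(G)$ with $a=d(G)-1\ge 2$ contains a $3$-cycle but, by Theorem \ref{died}, no dominating vertex, since such a vertex would have to carry a loop and hence be a generating $a$-tuple with $a<d(G)$; if $d(G)=2$, apply Theorem \ref{died} to the components of $\Gamma^*_{1,1}(G)$.) Since neither ``contains a $3$-cycle'' nor ``adjacent to all other vertices'' involves loops, this criterion reads identically in $\lst$ and $\lti$ and the masquerade problem never arises. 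Either adopt that criterion or write out the padding argument above in full.
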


\begin{proof}
It follows from Theorem \ref{died} that $G$  is either the Klein group or  the dihedral group  $D_p$ if and only if every $\Gamma\in \lst$ containing a 3-cycle contains also a vertex adjacent to all the other vertices.
In this case $G$ is the Klein group if and only if $\lst$ contains the complete graph $K_3$. If $K_3$ is not in $\lst,$ then $G\cong D_p$ for some $p.$ In order to determine $p,$ we consider all the stars in $\lst:$ they correspond to $\Gamma_{0,r}^*(G)\cong K_{1,\phi_G(r)},$ with $r\geq2$:
so we may determine $\phi_G(2)=\min_{r\geq 2}\phi_G(r).$ On the other hand
$$\phi_G(2)=4p^2\left(1-\frac{1}{4}\right)\left(1-\frac{1}{p}\right),$$ which is an injective function on $p$, whenever $p\geq 2$. Hence  by the knowledge of $\phi_G(2)$ we recognize $p $ and consequently $|G|$.  
\end{proof}
   
\begin{cor}\label{vialoop}
Let $G$ be a finite group. We may determine $\lst$ from the knowledge of $\lti.$
\end{cor}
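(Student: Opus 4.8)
The plan is to exhibit an explicit, deterministic procedure that restores the loop data of each graph in $\lst$ from its loop-free counterpart in $\lti$. The only difference between a graph $\Gamma\in\lst$ and its image $\tilde\Gamma\in\lti$ is the presence of loops, and a loop can occur only in a component arising from the case $a=b$: when $a\neq b$ the graph $\Gamma^*_{a,b}(G)$ is bipartite with parts $V_a$ and $V_b$, hence loopless. Deleting loops changes neither the adjacencies among distinct vertices nor the presence of a $3$-cycle (a $3$-cycle uses three distinct vertices). Consequently every graph-theoretic property invoked below may be read off directly from $\tilde\Gamma$; in particular, by Lemma \ref{tri}, from $\tilde\Gamma$ we can tell whether the underlying component is bipartite (no $3$-cycle) or not (contains a $3$-cycle), equivalently whether it comes from the case $a\neq b$ or $a=b$.

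First I would isolate the exceptional groups. Using only $\lti$, Proposition \ref{cic} decides whether $G$ is cyclic and, if so, returns $|G|$; since a cyclic group is determined by its order, this fixes the isomorphism type of $G$, whence $\lst$ is known outright. If $G$ is not cyclic, then $d(G)\geq 2$, and Proposition \ref{didd}, again from $\lti$ alone, decides whether $G\cong C_2\times C_2$ or $G\cong D_p$ for some odd prime $p$ (determining $p$ in the latter case); in either situation the isomorphism type of $G$ is completely determined, so $\lst$ is again known. These cases must be separated out because, by Theorem \ref{died}, precisely for these groups a graph may contain a $3$-cycle together with a vertex adjacent to all the others \emph{without} carrying a loop, so the general loop-criterion would restore spurious loops.

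In the remaining case $G$ is non-cyclic and isomorphic neither to the Klein group nor to any $D_p$, so Corollary \ref{loop} applies. I would then reconstruct each $\Gamma$ from $\tilde\Gamma$ by the rule: if $\tilde\Gamma$ contains no $3$-cycle, add no loops; if $\tilde\Gamma$ contains a $3$-cycle, place a loop around every vertex adjacent to all the other vertices of $\tilde\Gamma$ and around no other vertex. By Corollary \ref{loop} this puts loops exactly where $\Gamma$ carried them, so it recovers $\Gamma$. Because the rule depends only on $\tilde\Gamma$, loop-deletion is injective on $\lst$ in this case and the rule is its inverse; applying it to every member of $\lti$ therefore reproduces $\lst$.

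The routine verifications are already packaged in the cited results, so no genuinely new computation arises; the only point requiring care is the interface between the general criterion and the exceptional groups. The substance of the argument is the observation that the loop positions are an intrinsic invariant of the loop-free graph once $G$ is known not to be cyclic, Klein, or dihedral of prime order---exactly the content of Corollary \ref{loop}---together with the fact, supplied by Propositions \ref{cic} and \ref{didd}, that these exceptional groups are themselves recognizable from $\lti$ alone.
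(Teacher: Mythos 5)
Your proposal is correct and follows essentially the same route as the paper: recognize the exceptional groups (cyclic, Klein, $D_p$) from Propositions \ref{cic} and \ref{didd}, where the isomorphism type and hence $\lst$ is determined outright, and otherwise restore loops via the criterion of Corollary \ref{loop}, which is readable from the loop-free graph since loop deletion affects neither $3$-cycles nor adjacency between distinct vertices. Your write-up is somewhat more explicit than the paper's two-line argument about why the criterion transfers to $\tilde\Gamma$, but the substance is identical.
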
   
   
\begin{proof}By Propositions \ref{cic} and \ref {didd} we may assume that $G$ is neither cyclic nor dihedral of order $2p.$ But then, by Corollary \ref{loop},  assuming that  we have removed all loops  in advance,  we can easily recognize which vertices have a loop around and put them back.  
   \end{proof}
 
\begin{defn}
Given $\Gamma\in \lst,$ let  $e(\Gamma)$ be  the number of edges, excluding the loops,  $l(\Gamma)$ be the number of loops and set $\nu(\Gamma)=2e(\Gamma)+l(\Gamma)$ if $\Gamma$ contains a 3-cycle, 
$\nu(\Gamma)=e(\Gamma)$ otherwise.
\end{defn}

\begin{prop}\label{ord}
Let $G$ be  a finite group. We may determine $d(G)$ from the knowledge of $\lst.$
\end{prop}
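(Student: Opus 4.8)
The plan is to exploit the statistic $\nu$ just introduced, together with the connectivity results of Corollary~\ref{c15}, turning the recovery of $d(G)$ into a counting problem about the multiplicities of the values $\nu(\Gamma)$. First I would dispose of the groups already analysed: by Propositions~\ref{cic} and~\ref{didd} we can decide from $\lst$ whether $G$ is cyclic, the Klein group, or a dihedral group $D_p$, and in each of these cases $d(G)$ is known. So I may assume that $G$ is non-cyclic and isomorphic to none of these groups; in particular $d(G)\ge 2$, $|G|\ge 4$, and by Corollary~\ref{loop} the loops of every $\Gamma\in\lst$ are recognizable. Consequently, for each $\Gamma\in\lst$ I can compute $e(\Gamma)$ and $l(\Gamma)$ and decide whether $\Gamma$ contains a $3$-cycle, so $\nu(\Gamma)$ is determined by the isomorphism type of $\Gamma$ alone.

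The central observation is that for a \emph{connected} $\Gamma^*_{a,b}(G)$ one has $\nu(\Gamma^*_{a,b}(G))=\phi_G(a+b)$: summing $\nu$ over the components of $\Gamma^*_{a,b}(G)$ yields $\phi_G(a+b)$ by the count of edges and loops of $\Gamma_{a,b}(G)$ carried out earlier (the edges equal $\phi_G(a+b)$ when $a\ne b$, and $2e+l=\phi_G(a+b)$ when $a=b$, with the components being bipartite in the first case and containing a $3$-cycle in the second). By Corollary~\ref{c15}, whenever $d:=a+b>d(G)$ the graph $\Gamma^*_{a,b}(G)$ is connected and non-empty, so it contributes exactly one member to $\lst$, of $\nu$-value $\phi_G(d)$. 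Since there are precisely $\lfloor d/2\rfloor+1$ pairs $(a,b)$ with $0\le a\le b$ and $a+b=d$, the value $\phi_G(d)$ is attained by exactly $\lfloor d/2\rfloor+1$ members of $\lst$ arising at total degree $d$. Moreover $\phi_G$ is strictly increasing on $\{d(G),d(G)+1,\dots\}$, since appending an arbitrary element of $G$ to a generating $d$-tuple gives $\phi_G(d+1)\ge |G|\,\phi_G(d)>\phi_G(d)$; hence distinct values of $d$ produce distinct $\nu$-values.

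To extract $d(G)$ I would first isolate the regime $d>d(G)$. Let $v_{\min}$ be the minimum number of leaves over all stars in $\lst$; by Lemma~\ref{degone} the stars are exactly the components $\Gamma^*_{0,b}(G)$ with $b\ge d(G)$, the smallest being $\Gamma^*_{0,d(G)}(G)$, so $v_{\min}=\phi_G(d(G))$. Every component arising at total degree $d(G)$ has $\nu\le v_{\min}$ (the star attains equality, and any other such component lies in a graph whose components have $\nu$-values summing to $\phi_G(d(G))$, hence each is $\le v_{\min}$), whereas every component from $d>d(G)$ has $\nu=\phi_G(d)>v_{\min}$. Thus the members of $\lst$ with $\nu>v_{\min}$ are exactly those of the high regime; listing their distinct $\nu$-values in increasing order gives $\phi_G(d(G)+1)<\phi_G(d(G)+2)<\cdots$ with respective multiplicities $m_k=\lfloor(d(G)+k)/2\rfloor+1$. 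Reading off the first two multiplicities then recovers $d(G)$: if $m_1=m_2$ then $d(G)+1$ is even and $d(G)=2m_1-3$, while if $m_2=m_1+1$ then $d(G)+1$ is odd and $d(G)=2m_1-2$.

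The point requiring the most care, and the only real obstacle, is the unsoluble case with $a+b=d(G)$, where Corollary~\ref{c15} gives no information and $\Gamma^*_{a,b}(G)$ may be disconnected, breaking the clean multiplicity count $\lfloor d/2\rfloor+1$ at total degree $d(G)$. The argument copes with this precisely because all components arising at total degree $d(G)$ have $\nu\le v_{\min}$ and are therefore removed by the strict inequality $\nu>v_{\min}$, so that the regular pattern $\lfloor d/2\rfloor+1$ survives untouched for $d>d(G)$. The remaining verifications, namely that $\nu$ depends only on the isomorphism type, that $\phi_G$ is strictly increasing, and the two-case arithmetic recovering $d(G)$ from $m_1$ and $m_2$, are routine.
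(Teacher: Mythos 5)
Your proposal is correct and follows essentially the same route as the paper: both identify the values $\phi_G(d(G)+i)$ from the leaf-counts of the stars, observe that every component at level $d(G)$ has $\nu$ at most $u_0=\phi_G(d(G))$ while each level $d>d(G)$ contributes exactly the $\lfloor d/2\rfloor+1$ connected graphs $\Gamma^*_{a,b}(G)$ with $\nu=\phi_G(d)$, and then read off $d(G)$ from a multiplicity count. The only (cosmetic) difference is the final extraction: the paper counts the bipartite non-star graphs at the single level $d(G)+1$ and uses the presence or absence of a $3$-cycle component to settle the parity, whereas you compare the multiplicities $m_1$ and $m_2$ at two consecutive levels; both are valid.
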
  

\begin{proof}By Proposition \ref{cic} we may assume 
$d=d(G)\geq 2.$ We consider all the stars in $\lst$ sorted  by the increasing number of leaves $u_i$, for $i\geq 0$: they corresponds to the graph $\Gamma_{0,d(G)+i}^*(G)$ for $i\in \mathbb N.$ If $\Gamma$ is a connected component of $\Gamma^*_{a,b}(G)$ and $a+b=d,$ then $\nu(\Gamma)\leq u_0.$ On the other hand if $a+b>d,$ then, by Corollary \ref{c15},
$\Gamma=\Gamma^*_{a,b}(G)$ is connected and $\nu(\Gamma)=u_{a+b-d}.$
Let $\Omega$ be the subfamily of $\lst$ consisting of the graphs $\Gamma$ with $\nu(\Gamma)=u_1.$
 Depending on the parity of  $d+1$  we  have the following two situations:
 \begin{enumerate} 
\item  $\Omega$ contains $\Gamma_{0,d+1}^*(G)\cong K_{0,u_1}$, other  bipartite $x=[\frac{d-1}{2}]$  graphs not isomorphic to the star $K_{0,u_1}$ and no graph containing a 3-cycle.
\item  $\Omega$ contains $\Gamma_{0,d+1}^*(G)\cong K_{0,u_1}$, other bipartite $x=[\frac{d-1}{2}]$  graphs  not isomorphic to the star $K_{0,u_1}$ and exactly one graph containing a 3-cycle.
\end{enumerate}
In the  former case $d+1$ is odd and $d=2x.$ In the second case  $d+1$ is even  and $d=2x+1$.
\end{proof}

The following definition is useful to deal with Question \ref{domuno}.

\begin{defn}
	Let $G\neq 1$ be a finite group and let  $\Gamma\in \lst$: we say that
	$\Gamma$ has level $t$ if there exist $a, b$ such that $t=a+b$ and 
	$\Gamma$ is a connected component of $\Gamma^*_{a,b}(G).$
\end{defn}
The following lemma says that this is a good definition.
   
\begin{lemma}\label{uniclev}
	Let $G\neq 1$ be a finite group. If $\Gamma\in \lst$, then the level of $\Gamma$ is uniquely determined.
\end{lemma}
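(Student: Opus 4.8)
The plan is to read off the level of $\Gamma$ from the single invariant $\nu(\Gamma)$, which is intrinsic to the isomorphism type of $\Gamma$ (it only records the number of non-loop edges, the number of loops, and whether a $3$-cycle occurs). The two facts I would establish first are: (I) if $\Gamma^*_{a,b}(G)$ is connected then $\nu(\Gamma^*_{a,b}(G))=\phi_G(a+b)$; and (II) for an arbitrary connected component $\Gamma$ of $\Gamma^*_{a,b}(G)$ one has $\nu(\Gamma)\le \phi_G(a+b)$, with equality exactly when $\Gamma$ is the whole graph. Both follow from the edge count recorded in Section~2: when $a\ne b$ the graph is bipartite and loopless, so $\nu$ is just its number of edges and the total number of edges is $\phi_G(a+b)$; when $a=b$ the relation $2e+l=\phi_G(a+b)$ from Section~2 gives the same conclusion once we know that every component carries a $3$-cycle, which is Lemma~\ref{tri} (its only exception being $(a,G)=(1,C_2)$, together with the observation that a component always contains a non-loop edge). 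Summing the contributions of the components then yields (II). I would dispose of the single genuinely exceptional group $G\cong C_2$ by hand: there all $\Gamma^*_{a,b}(G)$ are connected by Corollary~\ref{c15}, their edge numbers $\phi_{C_2}(t)=2^t-1$ are strictly increasing, and the only graph on which the $3$-cycle test misfires (the level-$2$ graph $\Gamma^*_{1,1}(C_2)$, a single edge with a loop) is distinguished from all others by carrying a loop, so no cross-level coincidence can occur.

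Next I would record the strict monotonicity of $\phi_G$ on the relevant range. For $|G|\ge 2$ and $t\ge d(G)$, appending an arbitrary element of $G$ to a generating $t$-tuple produces a generating $(t+1)$-tuple, and distinct inputs give distinct outputs, so $\phi_G(t+1)\ge |G|\,\phi_G(t)>\phi_G(t)$; hence $\phi_G$ is strictly increasing on $\{t\ge d(G)\}$. I would also note that every $\Gamma\in\lst$ has level at least $d(G)$, since $\Gamma^*_{a,b}(G)$ has no edges, and hence no non-isolated vertices, whenever $a+b<d(G)$.

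With these in place the conclusion is quick. Suppose, for contradiction, that $\Gamma$ occurs both at level $t_1$ and at level $t_2$ with $t_1<t_2$; since $\nu(\Gamma)$ depends only on the isomorphism type of $\Gamma$, it takes the same value $N$ in both descriptions. Because $t_2>t_1\ge d(G)$, we have $t_2>d(G)$, so by Corollary~\ref{c15} the graph realising $\Gamma$ at level $t_2$ is connected; thus $\Gamma$ is that whole graph and, by (I), $N=\phi_G(t_2)$. On the other hand, viewing $\Gamma$ as a component at level $t_1$, fact (II) gives $N\le\phi_G(t_1)$. Combining these with the strict monotonicity of $\phi_G$ yields $\phi_G(t_2)=N\le\phi_G(t_1)<\phi_G(t_2)$, a contradiction. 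Therefore the level of $\Gamma$ is uniquely determined.

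The main obstacle is not the final deduction but the bookkeeping behind (I): one must check that the $3$-cycle detector built into $\nu$ correctly separates the case $a=b$ (where every component is non-bipartite, so $\nu=2e+l$) from the case $a\ne b$ (where the component is bipartite, so $\nu=e$), and that in the diagonal case every component really does contain a $3$-cycle. This is exactly where Lemma~\ref{tri} and its lone exception $(1,C_2)$ enter, and it is the reason $C_2$ must be treated separately. Once (I) and (II) are secured, the strict monotonicity of $\phi_G$ does all the remaining work.
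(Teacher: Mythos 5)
Your proof is correct and follows essentially the same route as the paper: compare the invariant $\nu(\Gamma)$ against the strictly increasing sequence $u_i=\phi_G(d(G)+i)$ read off from the stars, using Corollary~\ref{c15} to see that above level $d(G)$ the whole graph is connected with $\nu=\phi_G(a+b)$, while at any level a component satisfies $\nu(\Gamma)\le\phi_G(a+b)$. The only difference is that you spell out the $a=b$ bookkeeping via Lemma~\ref{tri} and treat the $(1,C_2)$ exception explicitly, whereas the paper dispenses with that case through its standing assumption $d(G)\ge 2$ made after Proposition~\ref{cic}.
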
 
  \begin{proof}
  Put $d=d(G)$ and assume that $\Gamma_{0,d+i}^*(G)$ has $u_i$ leaves for $i\geq 0$. Let $\Gamma\in \lst.$ If  $\nu(\Gamma) \leq u_0$, then $\Gamma$ has level $d.$ Otherwise   $\nu(\Gamma)= u_i$ for some positive integer $i$ and $\Gamma$ has  level $d+i.$ 
 \end{proof}
 
 \begin{lemma}\label{vavb}
 	Let $G\neq 1$ be a finite group. Let  $a+b>d(G)$ and let  
 	$V_a$ and $V_b$ be the two parts of the bipartite graph $\Gamma^*_{a,b}(G)$ corresponding to elements of $G^a$ and $G^b$ respectively. If $a<b$ then $|V_a|<|V_b|.$
 \end{lemma}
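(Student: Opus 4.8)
The plan is to build an explicit surjection from $V_b$ onto $V_a$, which immediately yields $|V_a|\le |V_b|$, and then to show that this map fails to be injective, upgrading the inequality to a strict one. Throughout I use the description of the two parts coming from the definition of $\Gamma^*_{a,b}(G)$: a tuple $x=(x_1,\dots,x_a)\in G^a$ lies in $V_a$ exactly when there is some $(w_1,\dots,w_b)\in G^b$ with $\langle x_1,\dots,x_a,w_1,\dots,w_b\rangle=G$, and symmetrically $y=(y_1,\dots,y_b)\in G^b$ lies in $V_b$ exactly when it can be completed to a generating set by $a$ further elements. Since $a+b>d(G)$, there is a generating $(a+b)$-tuple, so both $V_a$ and $V_b$ are nonempty.

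Consider the projection $\pi\colon G^b\to G^a$ onto the first $a$ coordinates. First I would check that $\pi$ restricts to a map $V_b\to V_a$: if $y\in V_b$ and $\langle y_1,\dots,y_b,z_1,\dots,z_a\rangle=G$, then the $b$-tuple $(y_{a+1},\dots,y_b,z_1,\dots,z_a)$ completes $\pi(y)=(y_1,\dots,y_a)$, so $\pi(y)\in V_a$. Next, $\pi$ is surjective: given $x\in V_a$ with a completion $(w_1,\dots,w_b)$, the tuple $y=(x_1,\dots,x_a,w_1,\dots,w_{b-a})$ is completed to a generating set by the remaining $a$ elements $w_{b-a+1},\dots,w_b$, hence $y\in V_b$ and $\pi(y)=x$. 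Since $\pi$ is a surjection of finite sets, $|V_a|\le|V_b|$, with equality if and only if every fibre is a singleton.

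The crux, and the step I expect to be the real obstacle, is therefore to exhibit one element of $V_a$ with at least two preimages. My approach would be a last-coordinate perturbation. Suppose $x\in V_a$ has a preimage $y=(x_1,\dots,x_a,w_1,\dots,w_{b-a})\in V_b$, and set $K_0=\langle x_1,\dots,x_a,w_1,\dots,w_{b-a-1}\rangle$, the subgroup generated by all entries of $y$ except the last. If $K_0\neq 1$, pick $1\neq k\in K_0$ and replace the last entry of $y$ by $w_{b-a}k$; since $k\in K_0$ the generated subgroup is unchanged, so the new tuple still lies in $V_b$, still projects to $x$, and differs from $y$. This produces a second preimage of $x$. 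It thus suffices to find $x\in V_a$ with $\langle x_1,\dots,x_a\rangle\neq 1$, for then $K_0\neq 1$ automatically. When $a\ge 1$ such an $x$ always exists: if the trivial tuple $(1,\dots,1)$ lies in $V_a$ then so does $(g,1,\dots,1)$ for any $1\neq g\in G$ (any completion of the former completes the latter), and this tuple has nontrivial span; and if $(1,\dots,1)\notin V_a$ then every element of the nonempty set $V_a$ already has nontrivial span.

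It remains to dispose of the degenerate case $a=0$, where $V_a$ is the single empty tuple and $V_b$ is the set of generating $b$-tuples; here strictness is the assertion $\phi_G(b)\ge 2$, which follows since $b\ge 2$ (as $b>d(G)\ge 1$): starting from any generating tuple $(g_1,\dots,g_b)$ with some $g_j\neq 1$, replacing an entry $g_i$ with $i\neq j$ by $g_ig_j$ gives a second generating tuple. Combining the two cases gives $|V_a|<|V_b|$ in all cases. The only subtlety to watch is that the perturbation argument needs a preimage whose non-last entries already generate something nontrivial; choosing $x$ with nontrivial span is exactly what guarantees this, so no separate corner cases arise once $a\ge 1$.
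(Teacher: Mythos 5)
Your proof is correct. For the weak inequality the two arguments are mirror images of each other: the paper fixes, for each $x\in V_a$, a generating $(a+b)$-tuple extending $x$ and uses its first $b$ entries to define an injection $V_a\to V_b$, whereas you use the coordinate projection and check that it is a well-defined surjection $V_b\to V_a$; your $\pi$ is a left inverse of the paper's map, so this part is essentially equivalent. The genuine divergence is in the strictness step. The paper argues by contradiction: if its injection were also surjective then (it is asserted, without details) every vertex of $V_a$ would have degree $1$ in $\Gamma^*_{a,b}(G)$, which is then excluded by Lemma \ref{degone}. You instead exhibit explicitly a fibre of $\pi$ containing two elements: you choose $x\in V_a$ whose entries generate a nontrivial subgroup (always possible since $G\neq 1$ and $V_a\neq\emptyset$) and perturb the last coordinate of a preimage by a nontrivial element of the subgroup generated by the remaining coordinates, which changes the tuple without changing the subgroup it generates; the degenerate case $a=0$ reduces to $\phi_G(b)\geq 2$, which you settle by the same kind of perturbation. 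Your route is more self-contained --- it avoids Lemma \ref{degone} and the paper's unexpanded ``it can be easily seen'' reduction to the degree-$1$ condition --- at the cost of a short case analysis; the paper's version is shorter given the earlier lemma, but leans on it.
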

 \begin{proof} 
 	For any $x=(x_1,\ldots,x_a)\in V_a$, there exists a generating $(a+b)$-tuple  $z=(z_1,\ldots,z_{a+b})$ for $G$ such that $x_i=z_i$ for $1\leq i\leq a.$  We have  $y=(z_1,\ldots,z_b)\in V_b$, since its entries generate $G$ together with the $a$-tuple $(z_{b+1},\ldots,z_{a+b})$. We define an injective map $\phi: V_a\to V_b$ by setting $\phi(x)=y.$ Assume by contradiction that $\phi$ is  surjective: it can be easily seen that this implies that every $x\in V_a$ has degree 1 in $\Gamma^*_{a,b}(G)$: by Lemma \ref{degone} this is possible only when $G=1.$  
 	 \end{proof}
 
Now we can give the following answer to Question \ref{domuno}. 
\begin{prop}\label{detab}
Let $G\neq 1$ be a finite group. If $\Gamma\in \lst$ has level at least $d(G)+1,$ then there exists a uniquely determined pair $a\leq b$ such that
$\Gamma \cong \Gamma^*_{a,b}(G).$
\end{prop}
  
\begin{proof}Let $d=d(G)$ and assume that $\Gamma$ has level $r=d+i$ with $i\geq 1$. We easily  recognize the star $\Gamma_{0,r}^*(G)$ and, if $r$ is even, $\Gamma_{r/2,r/2}^*(G),$ which is the unique graph, at that level, containing a 3-cycle. Now we want to sort somehow all the bipartite graphs $\Gamma_{a,b}^*(G)$, with $1\leq a< r/2$ and $b=r-a.$ In this case $\Gamma_{a,b}^*(G)$ is a bipartite graph with the unique partition given by the two sets $V_a$ and $V_b$, and, as we have seen in the previous lemma, $|V_a|<|V_b|.$
We claim that $|V_a|<|V_{a+1}|$ whenever $2a < r-2$. It is enough to
construct $\phi: V_a\to V_{a+1}$ which is injective but not surjective.
 For any  $x=(x_1,\ldots,x_a)\in V_a$,  there exists $y=(y_1,\ldots,y_b)\in V_b$ such that  $G=\langle x_1,\ldots,x_a,y_1,\ldots,y_b \rangle$.   Therefore the $(a+1)$-tuple $(x_1,\ldots,x_a,y_1)$ is obviously an element of $V_{a+1}$, since it generates $G$ with the tuple $(y_2,\ldots,y_b)$. We set $\phi(x)=(x_1,\ldots,x_a,y_1).$ The map $\phi$ defined in this way is clearly injective. 	
 As in the proof of the previous lemma, it can be easily seen that $\phi$ is not surjective.
\end{proof}

The remaining part of this section is devoted to collect answers to Question \ref{domdue}.

\begin{prop}\label{dord}
	Let $G$ be  a finite group. We may determine $|G|$ from the knowledge of $\lst.$
\end{prop}
\begin{proof}By Proposition \ref{ord} we may determine $d=d(G).$ Moreover by Lemma \ref{uniclev} and Proposition
 \ref{detab}, we may identify the graph  $\Gamma=\Gamma_{1,d}^*(G)$, which is a bipartite graph with a unique partition in two parts. The two parts are $V_1$ and $V_d$. By Lemma \ref{vavb} $|G|=|V_1|<|V_d|.$ 
\end{proof}

 An immediate consequence of the results in this section is:

\begin{thm}\label{pzg}
	Let $G$ be  a finite group. We may determine $P_G(s)$ from the knowledge of $\lst.$
\end{thm}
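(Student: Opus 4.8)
The plan is to recover from $\lst$ the entire sequence $(\phi_G(t))_{t\in\mathbb N}$ together with the order $|G|$, and then to exploit the fact that $P_G(s)$ is a Dirichlet \emph{polynomial}. By the foregoing results of this section we may assume that both $d:=d(G)$ and $|G|$ are already known, $d(G)$ by Proposition \ref{ord} and $|G|$ by the proposition immediately preceding this theorem (the trivial case $G=1$ is recognised by Proposition \ref{cic}, where $P_G(s)=1$, so from now on $G\neq 1$). Since $P_G(t)=\phi_G(t)/|G|^t$ for every $t\in\mathbb N$, once $|G|$ is in hand it suffices to determine $\phi_G(t)$ for all $t$.

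First I would recover the values $\phi_G(t)$. For $t<d$ one has $\phi_G(t)=0$, so only the range $t\geq d$ must be treated. Here I would use the stars of $\lst$: by Lemma \ref{degone} these are exactly the connected components possessing a vertex of degree $1$, namely the graphs $\Gamma^*_{0,b}(G)\cong K_{1,\phi_G(b)}$, and each such star has a well-defined level by Lemma \ref{uniclev}. As already observed in the proofs of Lemma \ref{uniclev} and Proposition \ref{ord}, the star of level $d+i$ is precisely $\Gamma^*_{0,d+i}(G)$ and has $u_i=\phi_G(d+i)$ leaves; since appending any of the $|G|$ group elements to a generating $t$-tuple produces a generating $(t+1)$-tuple, $\phi_G$ is strictly increasing on $\{t\geq d\}$ for $G\neq 1$, so sorting the stars by their number of leaves reconstructs the assignment $t\mapsto\phi_G(t)$ for every $t\geq d$. (Equivalently, for $t>d$ every level-$t$ component $\Gamma$ is connected by Corollary \ref{c15} and satisfies $\nu(\Gamma)=\phi_G(t)$.) Thus $\phi_G(t)$, and hence $P_G(t)=\phi_G(t)/|G|^t$, becomes known for every $t\in\mathbb N$.

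It then remains to pass from the values $P_G(t)$, $t\in\mathbb N$, to the function $P_G(s)$. As recalled in the introduction, $P_G(s)=\sum_{n}a_n(G)\,n^{-s}$ with $a_n(G)=\sum_{|G:H|=n}\mu_G(H)$, so it is a finite Dirichlet series whose nonzero coefficients are supported on indices $n=|G:H|$ of subgroups, all dividing $|G|$, with $a_1(G)=\mu_G(G)=1$. Knowing $|G|$ we know a finite set $\{n_1=1<n_2<\dots<n_k\}$, the divisors of $|G|$, containing this support; evaluating at $t=1,\dots,k$ yields the linear system $\sum_{j}a_{n_j}(G)\,n_j^{-t}=P_G(t)$ whose coefficient matrix $\bigl(n_j^{-t}\bigr)_{t,j}$ is a generalised Vandermonde matrix in the distinct numbers $n_j^{-1}$ and is therefore invertible, so solving it determines all the $a_{n_j}(G)$ and hence $P_G(s)$. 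Alternatively one may extract the coefficients one at a time from the behaviour of $P_G(t)$ as $t\to\infty$, using that $n^{-t}\to 0$ for $n>1$.

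The only genuine content beyond bookkeeping is the identification of the leaf-counts of the stars (equivalently of $\nu(\Gamma)$) with the values of $\phi_G$ at the correct level, which rests on the connectivity result Corollary \ref{c15} and on the level being well defined (Lemma \ref{uniclev}). Once $|G|$ and the sequence $(\phi_G(t))_{t}$ are available, the final passage to $P_G(s)$ is the standard fact that a Dirichlet polynomial with known bounded support is determined by finitely many integer evaluations, so no serious obstacle remains: the theorem is essentially the assembly of the lemmas of this section.
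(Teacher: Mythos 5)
Your proposal is correct and follows essentially the route the paper intends: the paper states the theorem as an immediate consequence of the preceding results of the section, namely that $d(G)$, $|G|$ and the values $\phi_G(t)$ (read off from the leaf-counts of the stars, equivalently from $\nu(\Gamma)$ at each level) are all recoverable from $\lst$, whence $P_G(t)=\phi_G(t)/|G|^t$ is known for all integers $t$ and the Dirichlet polynomial, supported on divisors of $|G|$, is determined by finitely many such evaluations. Your write-up merely makes explicit the monotonicity of $\phi_G$ and the Vandermonde inversion, both of which are routine and correct.
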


\begin{cor}\label{corpzg}
Let $G$ be  a finite group. From the knowledge of $\lst$ we may determine
 whether $G$ is soluble, whether $G$ is supersoluble and, for every prime power $n,$ the number of maximal subgroups of $G$ of index $n$.
\end{cor}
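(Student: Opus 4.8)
The plan is to reduce everything to Theorem \ref{pzg}. That result tells us that $\lst$ determines the Dirichlet polynomial $P_G(s)=\sum_n a_n(G)n^{-s}$, where $a_n(G)=\sum_{|G:H|=n}\mu_G(H)$; so it suffices to show that solubility, supersolubility, and the numbers $m_{p^k}(G)$ of maximal subgroups of prime-power index $p^k$ are all encoded in the coefficients $a_n(G)$. Each of these is a known feature of the probabilistic zeta function, and I would organise the argument around the factorization of $P_G(s)$ into the contributions of the crowns of $G$ (see \cite{arac} and the references therein).

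For solubility, I would use the fact that $P_G(s)$ factors as a product $\prod_p f_p(s)$ over the primes, with each $f_p(s)$ a Dirichlet polynomial in $p^{-s}$, exactly when every crown of $G$ is abelian, that is, exactly when $G$ is soluble (recall that a nonabelian chief factor is never Frattini, since $\frat(G)$ is nilpotent, so it always contributes a nonabelian crown). Equivalently, $G$ is soluble if and only if the function $n\mapsto a_n(G)$ is multiplicative; since multiplicativity of the coefficients can be read off directly from $P_G(s)$, solubility is determined by $\lst$.

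For the maximal subgroups of prime-power index, fix a prime $p$ and note that the family of subgroups of $G$ of $p$-power index is upward closed and closed under intersection. Hence the coefficients $a_{p^k}(G)$, $k\ge 0$, depend only on this sublattice, and they assemble into the $p$-local factor $f_p(s)$ of $P_G(s)$, which is built entirely from the abelian $p$-crowns; from the data of these crowns one recovers $m_{p^k}(G)$ for every $k$ by the standard count of complements of complemented abelian chief factors. I expect this to be the delicate step: for $k\ge 2$ the coefficient $a_{p^k}(G)$ mixes the contribution of the maximal subgroups of index $p^k$ with those of their proper intersections (already for $G\cong C_2\times C_2$ one has $a_4=2$ while $m_4=0$), and it is precisely the crown factorization that separates the two.

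Finally, for supersolubility I would invoke Huppert's theorem: a finite group is supersoluble if and only if all of its maximal subgroups have prime index. An insoluble group is never supersoluble, and this case is already excluded by the solubility test. If $G$ is soluble, every maximal subgroup has prime-power index, so $G$ is supersoluble if and only if $m_{p^k}(G)=0$ for all primes $p$ and all $k\ge 2$; by the previous paragraph these numbers are visible in $P_G(s)$, hence in $\lst$, and the proof is complete.
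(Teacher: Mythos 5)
Your opening reduction is exactly the paper's: Theorem \ref{pzg} gives $P_G(s)$ from $\lst$, and everything else must be read off the coefficients $a_n(G)$. At that point, however, the paper simply cites \cite{arac} (Theorem 5 for solubility, Corollary 6 for supersolubility, Corollary 18 for the maximal subgroups of prime-power index), whereas you attempt to sketch proofs of those three facts, and the sketches contain genuine errors. First, for solubility you assert that $P_G(s)$ factors as $\prod_p f_p(s)$ \emph{exactly} when every crown of $G$ is abelian. The ``if'' direction is Gasch\"utz's formula, but the ``only if'' direction is the whole content of the cited theorem: observing that an insoluble group has a nonabelian crown does not rule out that the product of all the crown factors accidentally coincides with a product of single-prime Dirichlet polynomials. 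That direction is a deep result (relying on the classification), and your parenthetical about $\frat(G)$ being nilpotent does not touch it.

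Second, in the prime-power-index paragraph the claim that the subgroups of $p$-power index form a family closed under intersection is false: in $S_3$ two distinct subgroups of index $3$ intersect trivially, in index $6$. (The true and relevant fact is the weaker one that the interval $[H,G]$ above a subgroup $H$ of $p$-power index consists of subgroups of $p$-power index, so $a_{p^k}(G)$ does depend only on that poset.) More seriously, your recovery of $m_{p^k}(G)$ from ``the abelian $p$-crowns'' only makes sense for soluble $G$, while the corollary is stated for arbitrary finite groups: for $G=A_5$ the five maximal subgroups of index $5$ come from the unique, nonabelian crown, and there are no abelian crowns at all, so the mechanism you propose for separating $m_{p^k}(G)$ from the contributions of non-maximal subgroups simply does not apply in the insoluble case --- which is precisely where \cite[Corollary 18]{arac} does its work. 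Your supersolubility argument (Huppert's theorem plus the solubility test) is fine, but it rests on the two earlier steps. In short: either cite \cite{arac} as the paper does, in which case your first sentence already completes the proof, or supply genuinely different arguments for the three facts about $P_G(s)$; the intermediate sketches as written would not survive the insoluble case.
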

\begin{proof}
If we know $\lst$, then we know $P_G(s)$ and so we may deduce whether $G$ is soluble (\cite[Theorem 5]{arac}), whether $G$ is supersoluble (\cite[Corollary 6]{arac}) and for every prime power $n,$ the number of maximal subgroups of $G$ of index $n$ (\cite[Corollary 18]{arac}).
\end{proof}

Although several properties of $G$ can be recognized by the knowledge of the coefficients of the Dirichlet polynomial $P_G(s)$, this is not always the case. For example we cannot deduce from $P_G(s)$ whether $G$ is nilpotent. Consider for example $G_1=C_6\times C_3$ and 
$G_2=\perm(3)\times C_3.$ It turns out that
$$P_{G_1}(s)=P_{G_2}(s)=\left(1-\frac 1 {2^s}\right)\left(1-\frac 1 {3^s} \right)\left(1-\frac 3 {3^s}\right).$$ We want to show that nevertheless $\lst$ encodes enough information to decide whether $G$ is nilpotent. Before proving this result, we need an auxiliary lemma. 

\begin{lemma}\label{unfat}Let $\alpha=(a_1,\dots,a_r),$ $\beta=(b_1,\dots,b_s)$ be two sequences of prime integers, with $a_1\leq \dots \leq a_r$ and
	$b_1\leq \dots \leq b_s.$
	If
	$$\prod_i\left(1-\frac{1}{a_i}\right)=\prod_j\left(1-\frac{1}{b_j}\right),$$ then $\alpha=\beta.$
\end{lemma}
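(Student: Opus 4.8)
The plan is to rewrite the identity over the integers and then peel off the largest prime by a $p$-adic valuation argument, inducting on the total length of the two sequences.

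First I would write each factor as $1-\frac1p=\frac{p-1}{p}$, so that the hypothesis becomes
\[
\prod_{i=1}^{r}\frac{a_i-1}{a_i}=\prod_{j=1}^{s}\frac{b_j-1}{b_j}.
\]
Since the two sequences are sorted, it suffices to prove that $\{a_1,\dots,a_r\}$ and $\{b_1,\dots,b_s\}$ coincide as multisets. I would argue by induction on $r+s$. The base case $r+s=0$ is immediate: an empty product equals $1$, whereas any non-empty product of factors $\frac{p-1}{p}\in(0,1)$ is strictly smaller than $1$; hence if the common value is $1$ then both sequences are empty.

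For the inductive step, let $p$ be the largest prime occurring in either sequence, and let $m$ and $n$ be the numbers of times $p$ occurs in $\alpha$ and in $\beta$ respectively, so that at least one of $m,n$ is positive. The key point is to compute the $p$-adic valuation $v_p$ of the common value $V$. On the $\alpha$-side one has $v_p(a_i)=1$ exactly when $a_i=p$ and $v_p(a_i)=0$ otherwise, while $v_p(a_i-1)=0$ for every $i$: indeed, since $p$ is maximal, $2\le a_i\le p$ gives $1\le a_i-1\le p-1<p$, so $p$ divides none of the shifted numerators $a_i-1$. Therefore $v_p(V)=-m$, and the identical computation on the $\beta$-side gives $v_p(V)=-n$. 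Hence $m=n$, and in particular $p$ occurs in both sequences.

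Finally I would divide both sides by $\left(1-\frac1p\right)^{m}\neq 0$ and apply the inductive hypothesis to the shorter sequences $\alpha'$ and $\beta'$ obtained by deleting every occurrence of $p$ (all of whose remaining primes are strictly smaller than $p$). This yields $\alpha'=\beta'$ as multisets, and together with the equality $m=n$ of the multiplicities of $p$ we conclude $\alpha=\beta$. The only substantive step is the valuation computation, and its entire content is the elementary but decisive remark that the largest prime $p$ cannot divide any of the numerators $a_i-1$ or $b_j-1$; everything else is routine bookkeeping.
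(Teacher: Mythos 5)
Your proof is correct and follows essentially the same route as the paper's: induction on $r+s$, with the key observation that the largest prime $p$ occurring in either sequence divides none of the shifted numerators $a_i-1$, $b_j-1$, so its multiplicity can be read off and the factor cancelled. The paper phrases this by cross-multiplying to an integer identity and counting the multiplicity of $p$ on each side, whereas you use the $p$-adic valuation of the common rational value; these are the same argument in different notation.
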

\begin{proof}
	By induction on $r+s.$ We have
	\begin{equation}\label{rs}
	\prod_i a_i\prod_j(b_j-1)=\prod_i (a_i-1)\prod_jb_j.
	\end{equation}
	Let $p=\max\{a_1,\dots,a_r,b_1,\dots,b_s\}$, $r^*=\max\{i \mid a_i\neq p\}$, $s^*=\max\{j \mid b_j\neq p\}$. Since $p$ does not divides $a_i-1,$ $b_j-1$, divides $a_i$ if and only if $i>r^*$ and divides $b_j$ if and only if $j>s^*,$ we deduce that $r-r^*$ is the multiplicity of $p$ in the left term of (\ref{rs}) and $s-s^*$ is the multiplicity of $p$ in the right term of (\ref{rs}). In particular $r-r^*=s-s^*$ and $a_{r^*+1}=\dots=a_r=b_{s^*+1}=\dots=b_s=p.$ But then
	$$\prod_{i\leq r^*}\left(1-\frac{1}{a_i}\right)=\prod_{j\leq s^*}\left(1-\frac{1}{b_j}\right),$$ and we conclude by induction.
\end{proof}

\begin{thm}\label{nilpo}
	Let $G$ be  a finite nilpotent group. 
	If $H$ is a finite group and  ${\Lambda\!^*\!(H)}=\lst$, then $H$ is nilpotent.
\end{thm}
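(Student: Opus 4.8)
The plan is to reduce the recognition of nilpotency to a single counting invariant that $\lst$ can extract. Since the cyclic case is already settled by Proposition \ref{cic}, I may assume $G$ (hence, by that proposition, also $H$) is non-cyclic, so $d:=d(G)\ge 2$. From $\Lambda^*(H)=\lst$ we recover $|H|=|G|$, $d(H)=d(G)=d$ and $P_H(s)=P_G(s)$ by Proposition \ref{ord}, by the proposition computing $|G|$ above, and by Theorem \ref{pzg}. As $G$ is nilpotent it is soluble, and solubility is read off from $P_G$ by Corollary \ref{corpzg}; hence $H$ is soluble too and all the crown machinery of Section \ref{boundsol} is available for $H$. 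I will use the classical criterion that a finite group is nilpotent if and only if $G'\le\frat(G)$, equivalently $G/\frat(G)$ is abelian, so it suffices to prove that $\bar H:=H/\frat(H)$ is abelian.

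For a nilpotent $G=\prod_pG_p$ one has $P_G(s)=\prod_p\prod_{i=0}^{d_p-1}(1-p^i/p^s)$ with $d_p=d(G_p)$, which is precisely the polynomial of the abelian group $\prod_p(C_p)^{d_p}\cong G/\frat(G)$; together with $|G|=\prod_p|G|_p$ this pins down $d_p$ and $|G|_p$ for every prime. The pair $(C_6\times C_3,\ \perm(3)\times C_3)$ discussed before Lemma \ref{unfat} shows that $P_G$ by itself cannot detect nilpotency, so the argument must use genuinely finer data of $\lst$.

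The finer invariant I would use is the number $n_1$ of single-element vertices of the level-$d$ graph $\Gamma^*_{1,d-1}(G)$, that is, the number of $g\in G$ that lie in some minimal generating set of $G$ (for $b\ge d$ every element is non-isolated in $\Gamma^*_{1,b}$, so $b=d-1$ is the only informative choice). By Gasch\"utz's theorem quoted above this property depends only on the image of $g$ in $G/\frat(G)$, whence $n_1=|\frat(G)|\cdot e(\bar G)$, where $e(\bar G)$ is the number of extendable elements of $\bar G=G/\frat(G)$; for the abelian $\bar G=\prod_p(C_p)^{d_p}$ a direct count gives $e(\bar G)=\prod_{p:d_p=d}(p^{d_p}-1)\cdot\prod_{p:d_p<d}p^{d_p}$. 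The decisive step is then to prove that, among soluble groups with trivial Frattini subgroup and a fixed polynomial of this central shape, the abelian one is the unique group attaining this value: any non-central complemented chief factor strictly decreases the number of extendable elements (already $e(\perm(3)\times C_3)=15<16=e(C_6\times C_3)$). Since $\lst$ determines $n_1$, matching it across $G$ and $H$ then forces $\bar H$ to be abelian, and $H$ is nilpotent. Lemma \ref{unfat} provides the bookkeeping needed to separate the contributions of distinct primes in these product expressions, ensuring that the per-prime data $(d_p,|G|_p)$ are determined and hence transfer from $G$ to $H$.

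The hard part will be the strict extremal statement above — that a single non-central chief factor in $\bar H$ strictly lowers the count of extendable elements below the abelian value. I expect to prove it crown by crown, using Proposition \ref{richiami} together with the rank computations of Lemmas \ref{prs} and \ref{que} to compare, for a crown $C/R$ with module $V$, the number of images $\bar g$ completable modulo $R_G(V)$ in the central case $V\cong C_p$ with that in a non-central case. A secondary technical point is the recovery of $n_1$ from $\lst$: the graph $\Gamma^*_{1,d-1}(G)$ lives at the critical level $a+b=d(G)$, where the label-identification of Lemma \ref{detab} is not directly available, so one must first single out this graph among the level-$d$ members of $\lst$ (as the bipartite one whose two parts have the appropriate sizes) before reading off $n_1=|V_1|$; should the count $e$ alone turn out not to separate all cases, the same framework lets one appeal to the finer component data of the level-$d$ graphs.
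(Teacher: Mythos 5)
Your reduction to showing that $\bar H=H/\frat(H)$ is abelian, and your recovery of $|G|$, $d(G)$ and $P_G(s)$ from $\lst$, are fine. The proof, however, hinges entirely on the ``decisive step'' that you explicitly leave unproved: that among soluble groups with trivial Frattini subgroup and the prescribed Dirichlet polynomial, any non-central complemented chief factor strictly lowers the number of extendable elements. This claim is false. Take $d=3$, $G=(C_2)^3\times (C_3)^2$ and $H=S_3\times (C_2)^2\times C_3$. Both have order $72$, minimal number of generators $3$, and the same probabilistic zeta function $Q_{2,3}(s)Q_{3,1}(s)\tilde Q_{3,1}(s)=Q_{2,3}(s)Q_{3,2}(s)$. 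In $G$ an element is extendable to a generating triple if and only if its $(C_2)^3$-component is non-trivial, giving $e(G)=72-9=63$. In $H$, writing an element as $(s,v,t)$ with $s\in S_3$, $v\in(C_2)^2$, $t\in C_3$, the crown conditions (Proposition \ref{prouno}) show it is extendable if and only if $(\mathrm{sgn}(s),v)\neq(1,0)$ in $C_2\times(C_2)^2$, giving $e(H)=72-|A_3|\cdot 1\cdot|C_3|=72-9=63$ as well. So your invariant $n_1=|\frat|\cdot e$ does not separate a nilpotent group from a non-nilpotent one, and the closing hedge (``appeal to the finer component data of the level-$d$ graphs'') is not a proof. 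There is also a secondary unresolved point: $\Gamma^*_{1,d-1}(G)$ sits at the critical level $a+b=d(G)$, where Lemma \ref{detab} does not apply, and your recipe for singling it out presupposes knowledge of the part sizes you are trying to compute.

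For comparison, the paper avoids the critical level altogether: it works with $\Delta=\Gamma^*_{1,d}(G)$, which lives at level $d+1$ and is therefore uniquely identifiable by Lemmas \ref{uniclev} and \ref{detab}, and it uses a strictly finer invariant than the count of extendable elements, namely the multiset of \emph{degrees} of the vertices in $V_d$. For nilpotent $G$ every such degree has the form $|G|\prod_{p\in\pi_\omega}(1-1/p)$ with $\pi_\omega$ a set of \emph{distinct} primes, while for a non-nilpotent supersoluble $H$ with $P_H=P_G$ the Gasch\"utz factorisation forces a vertex of degree $|H|(1-1/p_1)^2$; Lemma \ref{unfat} then yields the contradiction. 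In the example above this is exactly what happens: $\Gamma^*_{1,3}(H)$ has a vertex of degree $72\cdot(2/3)^2=32$, which is not among the degrees $72,48,36,24$ occurring for $G$. If you want to salvage your approach you would have to replace the count $n_1$ by degree data of this kind, at which point you have essentially reconstructed the paper's argument.
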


\begin{proof}
Let $G$ be a finite nilpotent group. For every $p\in \pi(G)$ let $d_p=d(P)$ where $P$ is a Sylow $p$-subgroup of $G.$ For every nonnegative integer $\delta$ consider the Dirichlet polynomials
$$Q_{p,\delta}(s)=\prod_{0\leq i\leq \delta-1}\left(1-\frac{p^i}{p^s}\right),\quad \tilde Q_{p,\delta}(s)=\prod_{1\leq i\leq \delta}\left(1-\frac{p^i}{p^s}\right).$$ 
We have 
$$P_G(s)=\prod_{p\in \pi(G)}Q_{p,d_p}(s).$$
Since  ${\Lambda\!^*\!(H)}=\lst$, it follows from Theorem \ref{pzg} and Corollary \ref{corpzg} that $P_H(s)=P_G(s)$ and that $H$ is a finite supersoluble group with $d(H)=d(G)=d.$ By Lemma \ref{uniclev} and Proposition \ref{detab} in ${\Lambda\!^*\!(H)}=\lst$ we may uniquely identify the graph $\Delta=\Gamma^*_{1,d}(G)=\Gamma^*_{1,d}(H)$: it is a bipartite graph whose partition has two parts $V_1$ and $V_d$  such that $|V_1|=|G|=|H|.$
We are going to use the knowledge of the degrees of the vertices of $V_d$ to deduce that $H$ must be nilpotent.

Since $H$ is supersoluble,  $\overline H =  H/\frat(H)$ can be written in the form
$$\overline H =  H/\frat(H) \cong (W_1^{r_1}\times \dots \times W_t^{r_t})\rtimes\ X,$$
where $X$ is abelian, $|W_i|=p_i$ for a suitable prime $p_i$ and each $W_i$ is non-central. For every $p\in \pi(X),$ let $\delta_p=d(Q),$ where $Q$ is a Sylow $p$-subgroup of $H.$ By \cite[Satz 2]{gaeu}, we have
$$P_H(s)=\prod_{p\in \pi(X)}Q_{p,\delta_p}(s)\prod_{1\leq i\leq t} \tilde Q_{p_i,r_i}(s).$$ Let $\pi=\{p_1,\dots,p_t\}.$ Since $P_G(s)=P_H(s),$ by \cite[Lemma 16]{arac} we deduce that the primes $p_1,\dots,p_t$ are pairwise distinct, $d_{p_i}=r_i+1$ and $\delta_{p_i}=1$ for $1\leq i\leq t.$ Moreover $d_p=\delta_p$ if $p\in \pi(G)\setminus \pi.$ 

If $\omega=(g_1,\dots,g_d)\in G^d$ corresponds to a non-isolated vertex of $\Delta,$  then the degree of $\omega$ in $\Delta$ is
$\delta_\omega=|G|P_G(S,1),$ with $S=\langle g_1,\dots,g_d\rangle$ (here we denote by $P_G(S,1)$ the probability than a randomly chosen element of $G$ generates $G$ together with $S$). Notice that $P_G(S,1)=P_G(S\frat(G),1)=P_{G/S\frat(G)}(1)$ so there exists a subset $\pi_\omega$ of $\pi(G)$ such that 
\begin{equation}\label{161}\delta_\omega=|G|\prod_{p\in \pi_\omega}\left (1-\frac{1}{p}\right)=|V_1|\prod_{p\in \pi_\omega}\left (1-\frac{1}{p}\right).
\end{equation}
In order to conclude that $H$ is nilpotent, it suffices to prove that $\pi=\{p_1,\dots,p_t\}=\emptyset.$ Assume, by contradiction, $\pi\neq \emptyset,$ and let $q=p_1.$ We have $X=Y\times Q,$ where $Q$, the Sylow $q$-subgroup of $X,$ is cyclic. 
Let $K$ be a subgroup of $H$ such that
$$\overline K=K/\frat(H)=(W_1^{r_1-1}\times \dots \times W_t^{r_t})\rtimes\ Y.$$ It can be easily seen that $d(\overline K)\leq d(\overline H)=d.$ So there exists $(h_1,\dots,h_d)\in H^d$ such that $K=\langle h_1,\dots,h_d\rangle \frat H.$ Let $\alpha=(h_1,\dots,h_d):$ we have 
$$\delta_\alpha=|H|P_H(\langle h_1,\dots,h_d\rangle,1)=|V_1|P_{\overline H}(\overline K,1)=|V_1|\left (1-\frac{1}{p_1}\right)^2.$$
We deduce from (\ref{161}) that there exists $\pi \subseteq \pi(G)$ such that
$$\prod_{p\in \pi}\left (1-\frac{1}{p}\right)=\left (1-\frac{1}{p_1}\right)^2,$$
in contradiction with Lemma \ref{unfat}.
\end{proof}

Another piece of information that we cannot recover from the knowledge of $|G|$ and $P_G(s)$ is the order of $\frat(G).$ For example consider $$G_1=\langle x, y \mid x^5=1, y^4=1, x^y=x^2\rangle$$ and $$G_2=\langle x, y \mid x^5=1, y^4=1, x^y=x^4\rangle.$$ We have  $|G_1|=|G_2|=20$ and
$$P_{G_1}(s)=P_{G_2}(s)=\left(1-\frac{1}{2^s}\right)\left(1-\frac{5}{5^s}\right)$$
however $\frat(G_1)=1$ and $\frat(G_2)=\langle x^2\rangle.$ This motivates the following proposition.

\begin{prop}\label{frattini}	Let $G$ be  a finite group. We may determine $|\frat(G)|$ from the knowledge of $\lst.$
\end{prop}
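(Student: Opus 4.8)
The plan is to recover $|\frat(G)|$ from the degree sequence of a single, identifiable graph in $\lst$. By Proposition \ref{ord} we know $d=d(G)$ (we may assume $d\ge 2$, the cyclic case being settled by Proposition \ref{cic}), and by Lemmas \ref{uniclev} and \ref{detab} we can single out $\Delta=\Gamma^*_{1,d}(G)$. Since $1<d$, the graph $\Delta$ is bipartite with a unique bipartition into $V_1$ and $V_d$, and by Lemma \ref{vavb} the part $V_1$ is the smaller one, of the known size $|V_1|=|G|$. Thus from $\lst$ we recover $\Delta$ together with the degree $\delta_{1,d}(g)$ of every $g\in V_1$. I would then prove that $|\frat(G)|$ is exactly the number of vertices of $V_1$ of minimum degree, and that this minimum degree equals the already-known integer $\phi_G(d)=|G|^dP_G(d)$ (available through Theorem \ref{pzg} and the known value of $|G|$).

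For $g\in V_1$ one has $\delta_{1,d}(g)=\#\{(y_1,\dots,y_d)\in G^d:\langle g,y_1,\dots,y_d\rangle=G\}$, and the identity vertex has degree $\phi_G(d)$. As every generating $d$-tuple is counted by $\delta_{1,d}(g)$, we get $\delta_{1,d}(g)\ge \phi_G(d)$ for all $g$, and grouping the remaining neighbours by the subgroup they generate yields the identity
\[
\delta_{1,d}(g)-\phi_G(d)=\sum_{\substack{K<G\\ \langle g,K\rangle=G}}\phi_K(d),
\]
the sum over proper subgroups $K$. If $g\in\frat(G)$ this sum is empty, since any proper $K$ lies in a maximal subgroup $M$ and $g\in\frat(G)\le M$ forces $\langle g,K\rangle\le M<G$; hence $\delta_{1,d}(g)=\phi_G(d)$. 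So the minimum degree on $V_1$ is indeed $\phi_G(d)$, attained on all of $\frat(G)$.

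The substantive point, and the main obstacle, is the converse: for $g\notin\frat(G)$ one must produce a single nonzero term above, i.e.\ a proper subgroup $K$ with $d(K)\le d$ and $\langle g,K\rangle=G$ (equivalently a $d$-tuple $\vec y$ with $\langle\vec y\rangle\ne G$ but $\langle g,\vec y\rangle=G$). I would argue by induction on $|G|$, fixing a minimal normal subgroup $N$. If $\bar g\notin\frat(G/N)$ (which always holds when $N\le\frat(G)$, since $g\notin\frat(G)$), induction gives such a $K$ in $G/N$; lifting a generating set of size $\le d$ and using that $N\le\frat(G)$ makes any lift generate, while if $N\not\le\frat(G)$ one perturbs the lifts by elements of $N$, keeping the coordinate $g$ fixed. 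If instead $\bar g\in\frat(G/N)$ then necessarily $N\not\le\frat(G)$, every maximal subgroup $M$ avoiding $g$ must also avoid $N$, so $MN=G$; for abelian $N$ this $M$ is a complement with $d(M)=d(G/N)\le d$ and $K=M$ works. The delicate step is the lift when $N\not\le\frat(G)$: one must realise generation \emph{with $g$ fixed} by adjusting only the remaining $\le d$ generators inside $N$, and the existence of a good adjustment is exactly a rank/surjectivity condition of the kind recorded in Proposition \ref{richiami} and exploited in Theorem \ref{abd}'s crown reduction (for abelian $N$, via Lemma \ref{corona}), with the nonabelian chief factors handled by the generation analysis of direct powers of simple groups developed in Section \ref{powsim}. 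That $d(G)$ generators are enough to force this is precisely why the level $a+b=d(G)$ is the right one; note also that $\Gamma^*_{1,d}(G)$ is connected here, as $1+d>d(G)$ (Corollary \ref{c15}).

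Granting this key lemma, $\{g\in V_1:\delta_{1,d}(g)=\phi_G(d)\}=\frat(G)$, so $|\frat(G)|$ is the number of minimum-degree vertices of $\Delta$, a quantity visibly determined by $\lst$. This yields the determination of $|\frat(G)|$.
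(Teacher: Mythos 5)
Your reduction is fine up to what you yourself call the ``substantive point'': the degree identity and the inclusion $\frat(G)\subseteq\{g:\delta_{1,d}(g)=\phi_G(d)\}$ are correct. The gap is the converse, i.e.\ your key lemma that every $g\notin\frat(G)$ admits a proper subgroup $K$ with $d(K)\le d(G)$ and $\langle g,K\rangle=G$. In the language of Section 6 this is precisely the assertion that the $\equiv_{\remm}^{(d+1)}$-class of the identity is already equal to $\frat(G)$, where $d=d(G)$. For soluble $G$ this is true, but only because $\psi(G)\le d(G)+1$ (\cite[Corollary 2.12]{ccl}) --- a nontrivial theorem which your inductive sketch essentially re-proves without carrying out the hard step: adjusting the lifted generators inside $N$ \emph{while keeping the coordinate $g$ frozen} is not Proposition \ref{richiami} off the shelf, it is a Gasch\"utz-type statement with one fixed coordinate and needs an argument in the style of Lemma \ref{que}, including ruling out exceptional behaviour over the field with two elements. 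More seriously, the Proposition is stated for \emph{arbitrary} finite groups, and for insoluble $G$ only $\psi(G)\le d(G)+5$ is known; whether $\equiv_{\remm}^{(d+1)}$ already coincides with $\equiv_{\remm}$ (even merely on the class of $1$) is open, and your appeal to ``the generation analysis of Section \ref{powsim}'' for nonabelian chief factors does not supply the needed statement. Note that a maximal subgroup $M$ avoiding $g$ need not satisfy $d(M)\le d(G)$, so one really has to manufacture a small $K$, and no argument is given.

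The paper's proof sidesteps all of this: it works with $\Gamma^*_{1,t}(G)$ for $t\ge\max_{H\le G}d(H)$, so that every proper subgroup is automatically generated by $t$ elements; then the set $\Omega_t$ of vertices of $V_1$ whose neighbourhood consists only of the generating $t$-tuples is exactly $\frat(G)$ for trivial reasons, and $|\frat(G)|$ is read off as the stable value of $|\Omega_t|$ as $t$ grows. To salvage your level-$(d+1)$ approach you must either restrict to soluble groups and quote \cite[Corollary 2.12]{ccl}, or replace $d$ by a sufficiently large $t$ --- which is the paper's argument.
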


\begin{proof}
	Since $G$ is finite, there exists $\delta\in \mathbb N,$ such that $d(H)\leq \delta$ for every $H\leq G.$ Let $t\geq \delta$ and consider the graph $\Gamma^*_{1,t}(G)$ (we may identify this graph by Lemma \ref{uniclev} and Proposition 
	\ref{detab}). In $V_t$ there are some vertices (the ones corresponding to the generating $t$-uples of $G$) that are adjacent to all the vertices in $V_1.$ We remove these vertices and the edges starting from them. We obtain a new bipartite graph in which some vertices of $V_1$ are isolated: let $\Omega_t$ be the set of these vertices. Notice that $(g)\in \Omega_t$ if and only $\langle g,x_1,\dots,x_t\rangle \neq G$ whenever $\langle x_1,\dots,x_t\rangle \neq G.$ Since $d(H)\leq t$ for every $H\leq G$, we deduce that $(g)\in \Omega_t$  if and only $\langle g, H \rangle \neq G$ whenever $H \neq G.$ In other words $(g)\in \Omega_t$ if and only if $g\in \frat(G).$ We conclude that we may determine $n=|\frat(G)|$ from the fact that $|\Omega_t|=n$ if $t$ is sufficiently large.
\end{proof}

\begin{cor}
	Let $G$ be  a finite non-abelian simple group. 
	If $H$ is finite group and  ${\Lambda\!^*\!(H)}=\lst$, then $H\cong G.$
\end{cor}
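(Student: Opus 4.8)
The plan is to read off from $\lst$ the arithmetic and structural invariants of $H$ that the recovery machinery of Section \ref{recog} provides, and then to deduce $H\cong G$ from a recognizability property of the probabilistic zeta function. First I would collect what is forced by the hypothesis ${\Lambda\!^*\!(H)}=\lst$. By Theorem \ref{pzg} we obtain $P_H(s)=P_G(s)$; by the proposition determining the order we get $|H|=|G|$; by Proposition \ref{ord} we get $d(H)=d(G)=2$; and by Proposition \ref{frattini} we get $|\frat(H)|=|\frat(G)|$. Since $G$ is non-abelian simple we have $\frat(G)=1$, whence $\frat(H)=1$, and by Corollary \ref{corpzg} the group $H$ is not soluble because $G$ is not. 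Thus $H$ is a $2$-generated finite group, not soluble, with trivial Frattini subgroup, and with $|H|=|G|$ and $P_H(s)=P_G(s)$.

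Second, I would invoke the recognizability of finite non-abelian simple groups by their probabilistic zeta function: this is the essential input, and it is not part of the combinatorics developed here. In its strongest known form it asserts that if $G$ is non-abelian simple and $P_H(s)=P_G(s)$ for an arbitrary finite group $H$, then $H\cong G$, and the conclusion is then immediate from the previous paragraph. If instead one only has the recognizability of simple groups among simple groups, one must first prove that $H$ itself is simple. For this I would use the factorization of $P_H(s)$ as the product of the Dirichlet polynomials attached to the crowns (the non-Frattini chief factors) of $H$: since $\frat(H)=1$ every chief factor is complemented and contributes a non-constant factor, whereas $P_G(s)$ is the single factor attached to the non-abelian chief factor $G$. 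Comparing the (essentially unique) factorizations forces $H$ to possess a single crown; a short analysis using $\frat(H)=1$ and the insolubility of $H$ then yields that $H$ is characteristically simple, say $H\cong S^k$ with $S$ non-abelian simple, and comparison of the surviving factor with $P_G(s)$ forces $k=1$. In either route one concludes $H\cong G$.

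The hard part is precisely the recognizability of simple groups through $P_G(s)$: it lies far deeper than the bookkeeping of Section \ref{recog}, because distinguishing two non-abelian simple groups of equal order (such coincidences do occur) requires a classification-dependent comparison of the coefficients of $P_G(s)$, equivalently of the indices and multiplicities of the maximal subgroups, which are exactly the data that $\lst$ encodes through Corollary \ref{corpzg}. A secondary, more routine technicality is to make the crown-factorization step rigorous enough to guarantee both that $H$ is characteristically simple and that the crown multiplicity equals $1$; this rests on the structure theory of crowns recalled before Proposition \ref{prouno} together with the known factorization of the probabilistic zeta function over crowns.
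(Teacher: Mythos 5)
Your proposal follows essentially the same route as the paper: extract $P_H(s)=P_G(s)$ and $|\frat(H)|=|\frat(G)|=1$ from $\lst$ (Theorem \ref{pzg} and Proposition \ref{frattini}), then apply the recognizability of non-abelian simple groups via the probabilistic zeta function. One correction to the key external input, though: the ``strongest known form'' you quote --- that $P_H(s)=P_G(s)$ with $G$ non-abelian simple forces $H\cong G$ outright --- is false and cannot hold, since $P_H(s)=P_{H/\frat(H)}(s)$ for every finite $H$ (e.g.\ $H=\ssl(2,5)$ and $G\cong A_5$ have the same probabilistic zeta function). The correct statement, which is what the paper cites from Patassini, is $H/\frat(H)\cong G$; this is precisely why the step $|\frat(H)|=1$, which you do establish in your first paragraph, is not optional bookkeeping but the ingredient that closes the argument. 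With the theorem stated in that form your proof is the paper's proof; your alternative crown-factorization route is not needed and essentially re-derives part of the recognizability theorem itself.
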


\begin{proof}
By Theorem \ref{pzg} $P_G(s)=P_H(s),$ hence $H/\frat(H)\cong G$ by \cite [Theorem 1]{pat}. Moreover, by the previous proposition, $|\frat(H)|=|\frat(G)|=1,$ hence $H\cong G.$
\end{proof}

\begin{lemma} Assume that $\lst$ is known and let $a,b$ be a pair of non-negative integers. If either $a+b>d(G)$ or $a+b=d(G)$ and $G$ is soluble, then we may determine the graph $\Gamma_{a,b}(G/\frat(G)).$
\end{lemma}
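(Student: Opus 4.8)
The plan is to first pin down, inside the box $\lst$, the single graph $\Gamma^*_{a,b}(G)$ to which the pair $(a,b)$ gives rise, and then to pass from it to $\Gamma_{a,b}(\overline{G})$, where $\overline{G}=G/\frat(G)$, by exploiting that the elements of $\frat(G)$ are non-generators. As a preliminary, the earlier results make available from $\lst$ all the numerical data we shall need: by Proposition \ref{ord} we read off $d=d(G)$, by the subsequent results $|G|$, and by Proposition \ref{frattini} the value $|\frat(G)|$, hence also $|\overline{G}|=|G|/|\frat(G)|$ and $d(\overline{G})=d$. Under the standing hypothesis (either $a+b>d$, or $a+b=d$ with $G$ soluble) Corollary \ref{c15} tells us that $\Gamma^*_{a,b}(G)$ is connected, so it is a single member of $\lst$, and its level $a+b$ is recovered by Lemma \ref{uniclev}. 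When $a+b>d$ the pair $(a,b)$ is then determined by Lemma \ref{detab}; when $a+b=d$ the very same monotonicity argument applies, since the injective-but-not-surjective map of Lemma \ref{vavb} and Lemma \ref{detab} only uses that a non-isolated $a$-tuple extends to a generating $d$-tuple, so the level-$d$ components are sorted by the size of their smaller part $|V_a|$, the middle value $a=b$ being recognised as the unique non-bipartite component at that level. Thus we can exhibit the actual graph $\Gamma^*_{a,b}(G)$.

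Next I would compare $\Gamma_{a,b}(G)$ with $\Gamma_{a,b}(\overline{G})$. Because $\frat(G)$ consists of non-generators, for $u\in G^a$ and $w\in G^b$ we have $\langle u,w\rangle=G$ if and only if $\langle \overline{u},\overline{w}\rangle=\overline{G}$; equivalently a tuple is non-isolated exactly when its image is. The projection $G\to\overline{G}$ therefore exhibits $\Gamma_{a,b}(G)$ as a blow-up of $\Gamma_{a,b}(\overline{G})$: each vertex of $\overline{G}^a$ is replaced by the $|\frat(G)|^a$ elements of its fibre and each vertex of $\overline{G}^b$ by the $|\frat(G)|^b$ elements of its fibre, with two tuples adjacent precisely according to the adjacency of their images (and, in the case $a=b$, the $|\frat(G)|^a$ copies of a vertex forming a clique carrying loops exactly when that vertex generates $\overline{G}$). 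In particular all tuples over a common vertex of $\overline{G}$ are twins, so every twin class of $V_a$ in $\Gamma^*_{a,b}(G)$ (a maximal set of vertices sharing one neighbourhood) is a union of such fibres and thus has cardinality divisible by $|\frat(G)|^a$, and likewise in $V_b$ by $|\frat(G)|^b$.

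Hence I would reconstruct $\Gamma^*_{a,b}(\overline{G})$ by collapsing twins: replace each twin class of $V_a$ of size $s$ by $s/|\frat(G)|^a$ mutually-twin vertices and each twin class of $V_b$ of size $s$ by $s/|\frat(G)|^b$ vertices, preserving the all-or-nothing adjacency between classes (and rescaling the intra-class cliques and loops when $a=b$). Since the twin classes of $\Gamma_{a,b}(G)$ are exactly the preimages of those of $\Gamma_{a,b}(\overline{G})$, this returns $\Gamma^*_{a,b}(\overline{G})$ up to isomorphism. Finally, knowing $|\overline{G}|$, $a$ and $b$, I would restore the isolated vertices by adjoining $|\overline{G}|^a-|V_a|$ of them to the $a$-side and $|\overline{G}|^b-|V_b|$ to the $b$-side (with the evident adjustment when $a=b$), producing the full graph $\Gamma_{a,b}(\overline{G})$.

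The main obstacle is the verification in the last two steps that ``collapsing twins'' genuinely returns $\Gamma_{a,b}(\overline{G})$ rather than a coarser graph: a twin class of $\overline{G}$ need not arise from a single fibre, so a twin class of $\Gamma^*_{a,b}(G)$ may fuse several fibres. This is resolved by observing that we only require the isomorphism type of $\Gamma_{a,b}(\overline{G})$, which is encoded by the twin-class sizes together with the reduced adjacency pattern, and both are faithfully divided by $|\frat(G)|^a$ and $|\frat(G)|^b$ under the blow-up. A secondary technical point, already flagged, is the identification of $(a,b)$ at level exactly $d(G)$, for which Lemma \ref{detab} must be re-run one level lower; the argument there is insensitive to the level, so this is routine.
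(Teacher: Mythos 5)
Your proposal is correct and follows essentially the same route as the paper: identify $\Gamma^*_{a,b}(G)$ in $\lst$, restore the isolated vertices using $|G|^a$ and $|G|^b$, observe that adjacency depends only on entries modulo $\frat(G)$ so that each same-neighbourhood (twin) class is a disjoint union of Frattini fibres of size $|\frat(G)|^\gamma$, and collapse each such class by that factor to recover $\Gamma_{a,b}(G/\frat(G))$. The paper phrases this with two equivalence relations $\sim_1$ (equal neighbourhoods) and $\sim_2$ (Frattini fibres) and deletes $|\Omega_x|(1-1/f^\gamma)$ vertices from each $\sim_1$-class, which is exactly your twin-collapsing step.
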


\begin{proof}
	Let $f=|\frat(G)|.$ Under our assumptions we know that $\Gamma_{a,b}^*(G)$ is connected. First assume $a\neq b:$ $\Gamma_{a,b}^*(G)$ is a bipartite graph with $|V_a|+|V_b|$ vertices, while
	$\Gamma_{a,b}(G)$ has $|G|^a+|G|^b$ vertices.
	In particular $\Gamma_{a,b}(G)$ is uniquely determined from  $\Gamma_{a,b}^*(G)$: it suffices to add $|G|^a-|V_a|+|G|^b-|V_b|$ isolated vertices. Similarly, if $a=b,$ then $\Gamma_{a,b}(G)$ can be obtained  from  $\Gamma_{a,b}^*(G)$ by adding $|G|^a-|V|$ isolated vertices to the set $V$ of the vertices of  $\Gamma^{*}_{a,b}(G).$ In both  cases we note
 that if $\langle x_1,\dots,x_a,y_1,\dots,y_b\rangle=G$, 
	then $\langle x_1\alpha_1,\dots,x_a\alpha_a,y_1\beta_1,\dots,y_b\beta_b\rangle=G$ 
	for every $\alpha_i, \beta_j \in \frat(G).$ We may consider the following equivalent relations in $\Gamma_{a,b}(G):$ $\omega_1 \sim_1 \omega_2$ if and only if $\omega_1$ and $\omega_2$ have the same neighbourhood  in the graph;  $\omega_1=(x_1,\dots,x_\gamma) \sim_2 
	(y_1,\dots,y_\gamma)$, 
	with $\gamma\in \{a,b\}$, if and only if for any $j$ there exists $f_j\in \frat(G)$ with $y_j=x_jf_j.$ For every vertex $x=(x_1,\dots,x_\gamma)$ of $\Gamma_{a,b}(G)$, the equivalence class $\Omega_x=[x]_{\sim_1}$ is the disjoint union of $|\Omega_x|/f^\gamma$ $\sim_2$-equivalence classes: we obtain $\Gamma_{a,b}(G/\frat(G))$ from $\Gamma_{a,b}(G),$ by deleting from every equivalence class $\Omega_x$ precisely $|\Omega_x|(1-1/f^\gamma)$ vertices.
\end{proof}

By the previous results, at least in the case of finite soluble groups, the knowledge of $\lst$  is equivalent to the knowledge of ${\Lambda\!^*\!(G/\frat(G)})$ and $|\frat G|.$ 

\

From what we proved in this section, a question naturally arises:
\begin{question}
Assume that $G$ is a (soluble) group with $\frat(G)=1.$ Is $G$ uniquely determined from $\lst$?
\end{question}

The answer is negative. Indeed, consider the following example. Let $C_1=\langle x_1\rangle$ and $C_2=\langle x_2\rangle$ be two cyclic groups of order $5$ and let $V_1 = \langle a_1, b_1 \rangle$,
$V_2 = \langle a_2, b_2 \rangle$ be two vector space over the field with 11 elements. We define an action of $C_1$ on $V_1$ in which $x_1$ takes $a_1$ to $3a_1$ and $b_1$ to $4b_1$, and an action of $C_2$ on $V_2$ in which  $x_2$ takes $a_2$ to $3a_2$ and $b_2$ to $5b_2$. The semidirect products $G_1=V_1\rtimes C_1$ and $G_2=V_2\rtimes C_2$ are both of order $605$.  It is easy to see that $G_{1} \not\cong 
G_{2}$, since  every element of $C_1$ has determinant $1$ while this is not true for $C_2$. For $j=1,2$ let $W_{1,j}=\langle a_j \rangle,$ $W_{2,j}=\langle b_j \rangle$ and let $\pi_{i,j}$ be the projection $G_j\to G_j/W_{i,j}.$
We now construct a bijection
$\tau: G_1\to G_2$  in the following way:
\begin{itemize}
	\item we set $\tau((\alpha a_1+\beta b_1)x_1^\gamma)=(\alpha a_2+\beta b_2)x_2^\gamma$ if $\gamma= 0,1 \mod 5$;
	\item  let $g=(\alpha a_1+\beta b_1)x_1^\gamma$ with $\gamma \neq 0 \mod 5.$ There exist
	$\alpha^*, \beta^*$ (depending on $\alpha,\beta,\gamma)$ such that
	$g=((\alpha^* a_1+\beta^* b_1)x_1)^\gamma.$ We set
	$g^\tau=((\alpha^* a_2+\beta^* b_2)x_2)^\gamma.$
\end{itemize}
For $i\in \{ 1,2\},$  $\tau$ induces a bijection $\tau_i: G_1/W_{i,1}\to 
G_2/W_{i,2}.$ We have
\begin{equation}\label{biett}
\langle g^{\tau\pi_{i,2}} \rangle = \langle g^{\pi_{i,1}} \rangle^{\tau_i}.
\end{equation}

We claim that 
$\langle g_1,\dots,g_d\rangle=G_1$ if and only if  $\langle g_1^\tau,\dots,g_d^\tau\rangle=G_2.$ Clearly this claim implies that $\tau$ induces
a graph isomorphism between $\Gamma_{a,b}(G_1)$ and $\Gamma_{a,b}(G_2)$ for every pair $a, b$ of non-negative integers. To prove the claim notice that $\langle y_1,\dots,y_d\rangle=G_j$ if and only if
$\langle y_1^{\pi_{i,j}},\dots,y_d^{\pi_{i,j}}\rangle=G_j/W_{i,j}$ for $i\in \{ 1,2\}$ and that  $\langle y_1^{\pi_{i,j}},\dots,y_d^{\pi_{i,j}}\rangle=G_j/W_{i,j}$ if and only if there exist $k_1, k_2$ with $\langle y_{k_1}^{\pi_{i,j}}\rangle\neq \langle y_{k_2}^{\pi_{i,j}}\rangle.$ So assume $\langle g_1,\dots,g_d\rangle=G_1$ 
and fix $i\in \{1,2\}.$ There exist $k_1, k_2$ with $\langle g_{k_1}^{\pi_{i,1}}\rangle\neq \langle g_{k_2}^{\pi_{i,1}}\rangle.$
It follows from (\ref{biett}), that
$$\langle g_{k_1}^{\tau\pi_{i,2}} \rangle = \langle g_{k_1}^{\pi_{i,1}} \rangle^{\tau_i}\neq \langle g_{k_2}^{\pi_{i,1}} \rangle^{\tau_i}=\langle g_{k_2}^{\tau\pi_{i,2}} \rangle,$$
and so we conclude $\langle g_1^\tau,\dots,g_d^\tau\rangle=G_2.$

\

We conclude by observing that most of the arguments in this section 
use only  part of the information given by the family $\lst.$ In particular it seems a natural question to ask whether a smaller family of graphs can efficiently encode the generating property of $G.$ In some crucial steps of the proofs of our results
(for example in the proof of Theorem \ref{nilpo} and Proposition \ref{frattini}) a decisive role is played by the graphs $\Gamma^{*}_{1,t}(G)$. 
So a good candidate to consider seems to be the family $\lstt$ of the connected components of the graphs $\Gamma^{*}_{1,t}(G)$ for $t\in \mathbb N.$
We assume $\lstt=\{\Delta_k\}_{k\in \mathbb N}$, where the graphs are enumerated in such a way that  $\nu(\Delta_k)\leq \nu(\Delta_{k+1})$ for every $k\in \mathbb N.$

\begin{thm}\label{lstt}Assume that the family $\lstt$ is known. We may determine $|G|,$ $d(G)$, $P_G(s)$ and $|\frat(G)|.$ Moreover we may recognize whether or not $G$ is soluble, supersoluble, nilpotent.
\end{thm}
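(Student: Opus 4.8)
The strategy is to recover from $\lstt$ each of the quantities that Section~\ref{recog} extracted from the larger family $\lst$; the only information seemingly lost is that carried by the stars $\Gamma^*_{0,r}(G)$, and the point is that their numerical content is already encoded in the graphs $\Gamma^*_{1,t}(G)$ themselves. The basic device is the following reading of a single ``tail'' graph. For every $t\geq\max\{2,d(G)\}$ we have $1+t>d(G)$, so by Corollary~\ref{c15} the graph $\Gamma^*_{1,t}(G)$ is connected, and since $1\neq t$ it is bipartite; hence it appears in $\lstt$ as one connected bipartite graph $\Delta$. Its parts are $V_1$ and $V_t$ with $|V_1|<|V_t|$ by Lemma~\ref{vavb}, and every $(g)$ is non-isolated (complete $g$ by any generating $t$-tuple), so the smaller part has size exactly $|G|$. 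Its number of edges is $\nu(\Delta)=\phi_G(1+t)$. Finally, by the M\"obius degree formula a vertex $(g)\in V_1$ has degree $\sum_{\langle g\rangle\leq H}\mu_G(H)|H|^{t}$, which is $\geq\phi_G(t)$ with equality precisely when $g$ is a non-generator, i.e. $g\in\frat(G)$; thus the minimum degree in the smaller part is $\phi_G(t)$, attained at exactly $|\frat(G)|$ vertices. So one tail graph yields $|G|$, $\phi_G(t)$, $\phi_G(1+t)$ and $|\frat(G)|$.

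To use this I first isolate the tail graphs inside $\lstt$. The graphs $\Gamma^*_{1,t}(G)$ are empty for $t\leq d(G)-2$, and the only members of $\lstt$ of level $d(G)$ are the finitely many components of $\Gamma^*_{1,d(G)-1}(G)$, each with $\nu\leq\phi_G(d(G))$. Since appending an element to a generating tuple gives $\phi_G(r+1)\geq|G|\,\phi_G(r)$, the function $\phi_G$ is strictly increasing on $\{r\geq d(G)\}$, so $\phi_G(d(G))<\phi_G(d(G)+1)$. Hence the tail graphs are cofinitely many members of $\lstt$, namely exactly those with $\nu>\phi_G(d(G))$. Consequently $|G|$ is the constant (hence stable) size of the smaller part of $\Delta_k$ for all large $k$; and reading the minimum degree $\phi_G(t)$ and edge number $\phi_G(1+t)$ off each tail graph produces all values $\phi_G(r)$ with $r\geq d(G)$. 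These are correctly ordered because the edge number $\phi_G(1+t)$ of one tail graph equals the minimum degree of the next, so the graphs chain into the sequence $\phi_G(d(G)),\phi_G(d(G)+1),\dots$; its absolute index is then pinned down by the unique shift making $\phi_G(r)/|G|^{r}$ tend to $1$ (distinct shifts give limits differing by a power of $|G|\geq2$).

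With $|G|$ and the probabilities $P_G(r)=\phi_G(r)/|G|^{r}$ known for all large $r$, the Dirichlet polynomial $P_G(s)=\sum_n a_n n^{-s}$ is reconstructed coefficient by coefficient from its values at infinitely many integers ($a_1=\lim_r P_G(r)=1$, and the remaining coefficients are peeled off in turn), exactly as in Theorem~\ref{pzg}; then $d(G)$ is the least $r$ with $P_G(r)>0$. Solubility, supersolubility and the number of maximal subgroups of each prime-power index now follow from $P_G(s)$ by Corollary~\ref{corpzg}, while $|\frat(G)|$ is the count of minimum-degree vertices in the smaller part of any tail graph, matching Proposition~\ref{frattini}. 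For nilpotency I would follow Theorem~\ref{nilpo}: a nilpotent group is supersoluble, so if $P_G(s)$ already shows $G$ non-supersoluble we are done; otherwise, having identified $\Gamma^*_{1,d(G)}(G)$ (the smallest tail graph) together with the degrees $\delta_\omega=|G|\prod_{p\in\pi_\omega}(1-1/p)$ of its $V_d$-vertices, $G$ is nilpotent if and only if none of these degrees exhibits a repeated factor $(1-1/p)^2$, the uniqueness in Lemma~\ref{unfat} ensuring that nilpotent degree-patterns involve only pairwise distinct primes.

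The step I expect to be delicate is the bookkeeping at the boundary level $r=d(G)$: there Lemma~\ref{vavb} does not apply, $\Gamma^*_{1,d(G)-1}(G)$ may be disconnected, and its smaller part need not have size $|G|$, so these level-$d(G)$ graphs cannot serve as tail graphs. The argument must therefore be organised so that $|G|$, the absolute indexing of the $\phi_G$-sequence, and the Frattini and nilpotency computations are read only from the strictly higher, guaranteed-connected graphs $\Gamma^*_{1,t}(G)$ with $t\geq\max\{2,d(G)\}$, the level-$d(G)$ graphs serving merely to supply the gap $\nu\leq\phi_G(d(G))<\phi_G(d(G)+1)$ that separates them from the tail.
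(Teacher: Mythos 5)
Your overall route is the same as the paper's: isolate the cofinal family of connected bipartite graphs $\Gamma^*_{1,t}(G)$, read $|G|$ and the sequence $\phi_G(r)$ from them, reconstruct $P_G(s)$ and hence $d(G)$, solubility and supersolubility, and defer to the arguments of Theorem~\ref{nilpo} and Proposition~\ref{frattini} for nilpotency and $|\frat(G)|$. Your bookkeeping differs in two harmless ways that both work: you separate the tail from the level-$d(G)$ components by the counting gap $\nu\leq\phi_G(d)<\phi_G(d+1)$ together with the chaining of edge-counts to minimum degrees (the paper instead characterises the tail graphs structurally and proves directly that no component of $\Gamma^*_{1,d-1}(G)$ can imitate them), and you pin down the absolute index of the $\phi_G$-sequence by the normalisation $\phi_G(r)/|G|^r\to 1$ rather than by the paper's formula $d=\lim_k\log_{|G|}\nu(\Delta_k)-k+\tau$. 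The observation that the minimum degree on the smaller part of $\Gamma^*_{1,t}(G)$ equals $\phi_G(t)$ is correct (the vertex $(1)$ always attains it), and it gives a clean way to chain the tail graphs.

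There is, however, one step that is wrong as stated: the claim that for \emph{every} $t\geq\max\{2,d(G)\}$ the minimum degree $\phi_G(t)$ is attained at \emph{exactly} $|\frat(G)|$ vertices, so that $|\frat(G)|$ can be read off ``any tail graph''. Having degree exactly $\phi_G(t)$ means that every $t$-tuple generating $G$ together with $g$ already generates $G$; this is the condition $g\equiv_\remm^{(t+1)}1$, not the condition $g\in\frat(G)$. These agree only once the relations $\equiv_\remm^{(r)}$ have stabilised, i.e.\ for $t+1\geq\psi(G)$, and in general one only knows $\psi(G)\leq d(G)+5$ (for soluble $G$, $\psi(G)\leq d(G)+1$, so there your claim happens to hold at $t=d(G)$). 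Concretely, if $g\notin\frat(G)$ then every maximal subgroup $M$ with $g\notin M$ gives a witness tuple only when some proper subgroup $K$ with $\langle g,K\rangle=G$ satisfies $d(K)\leq t$, which need not happen for $t$ near $d(G)$. This is exactly why Proposition~\ref{frattini} works with $t\geq\delta=\max_{H\leq G}d(H)$. The repair is immediate and does not disturb anything else in your argument: the sets of minimum-degree vertices are non-increasing in $t$ and stabilise at $\frat(G)$, so $|\frat(G)|$ must be read as the \emph{eventual} (stable) count over all sufficiently large $t$, not from an arbitrary tail graph. Since only the Frattini count (and not the extraction of $\phi_G(t)$, the chaining, or the offset recovery) relies on the faulty equivalence, the rest of your proof stands.
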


\begin{proof}
If $G$ is cyclic, then $\Delta_0=\Gamma^*_{1,0}(G)$ is a non-trivial connected graph 
containing a vertex of degree 1, while, by Lemma \ref{degone}, if $G$ is not cyclic none of the graphs
$\{\Delta_k\}_{k\in \mathbb N}$ can contain a vertex of degree 1. So we may recognize from $\lstt$ whether $G$ is cyclic. Therefore, from now on we will assume that  $G$ is not cyclic.

Let $d=d(G).$ There exists $\tau\in \mathbb N$ such that $\Delta_0,\dots,\Delta_\tau$ are the connected components of $\Gamma^*_{1,d-1}(G).$ By Corollary \ref{c15}, for $k>\tau$ we have $\Delta_k=\Gamma^*_{1,d+k-\tau-1}(G).$ We need to recognize $\tau.$
Notice that if $k>\tau,$ then $\Delta_k$ is a bipartite graph with one of the two parts consisting precisely of $|G|$ vertices and the second part containing a subset of $\phi_G(d+k-\tau-1)$ vertices connected to all the vertices of the first part. We claim that  $\Delta_k$ does not behave in this way whenever $k\leq \tau.$ If $d=2,$ then none of the connected components of $\Gamma_{1,1}(G)$ is bipartite. So we may assume $d\neq 2.$ Assume by contradiction that there exists a connected component of  $\Gamma^*_{1,d-1}(G)$,
say $\Delta,$ which is a bipartite graph with two parts $A$ and $B$ such that
$|A|=G$ and at least one vertex in $B$ is connected to all the vertices in $A.$
Since $(1)$ is an isolated vertex of $\Gamma_{1,d-1}(G)$, it must be $A\subseteq G^{d-1}$ and $B\subseteq G.$ Let $(x)\in B$ be a vertex connected to all the vertices of $A$. Fix $(g_1,\dots,g_{d-1})\in A.$ Since
$$\langle x,g_1,\dots,g_{d-1}\rangle\!=\!\langle x,g_1x,g_2,\dots,g_{d-1}\rangle\!=\!\langle g_1,g_1x,g_2,\dots,g_{d-1}\rangle\!=\!\langle g_1,x,g_2,\dots,g_{d-1}\rangle$$
we have $(x,g_2,\dots,g_{d-1})\in A,$ hence $(x)$ and $(x,g_2,\dots,g_{d-1})$ are adjacent,
but this would imply $G=\langle x,g_2,\dots,g_{d-1}\rangle$, hence $d(G)\leq d-1,$ a contradiction.

Once  $\tau$ has been determined, we have that $|G|$ is the cardinality of the smaller part in the bipartite graph $\Delta_k$, for any choice of $k>\tau.$ Alternatively, we may notice that
$$\lim_{k\to\infty}\frac{\nu(\Delta_{k+1})}{\nu(\Delta_{k})}=\lim_{k\to\infty}\frac{\phi_G(d+k-\tau+1)}{\phi_G(d+k-\tau)}=|G|.$$
We can also determine $d(G),$ since $\nu(\Delta_k)=\phi_G(d+k-\tau) \sim |G|^{d+k-\tau}$ if $k$ is large enough and so
$$d=\lim_{k\to \infty}\log_{|G|}(\nu(\Delta_k))-k+\tau.$$
But now we know $P_G(k)$ for every positive integer $k\neq d(G)$ and this is enough to determine the Dirichlet polynomial $P_G(s).$ In particular we may recognize whether $G$ is soluble, supersoluble, nilpotent (for this we repeat the argument in Theorem \ref{nilpo}).  Moreover we may determine $|\frat(G)|$ (same proof as Proposition \ref{frattini}).
\end{proof}

\section{Generalizing some definitions and results from \cite{ccl}}

The following equivalence relation  $\equiv_{\remm}$ was introduced in \cite[Section 2]{ccl}: two elements
are equivalent if each can be substituted for the other in any
generating set for $G$. By \cite[Proposition 2.2]{ccl},  $x\equiv_\remm y$ if and only if
$x$ and $y$ lie in exactly the same maximal subgroups of $G$.
We then refine this to 
a sequence $\equiv_\remm^{(r)}$ of equivalence relations by saying that,
for any positive integer $r$,  $x\equiv_\remm^{(r)}y$ if and only if
\[(\forall z_1,\ldots,z_{r-1}\in G)\quad ((\langle x,z_1,\ldots,z_{r-1}\rangle=G)
\Leftrightarrow(\langle y,z_1,\ldots,z_{r-1}\rangle = G)).\]
Notice that  $x\equiv_\remm^{(r)}y$ if and only if $(x)$ and $(y)$ have the same neighbours in the graph $\Gamma_{1,r-1}(G)$: in particular  $\Gamma_{1,r-1}(G)$ determines the number of classes for the equivalence relation $\equiv_\remm^{(r)}$ and the sizes of these classes.
The relations $\equiv_\remm^{(r)}$ become finer as $r$ increases. 
We define
a  group invariant
$\psi(G)$ to be the value of $r$ at which the relations $\equiv_{\remm}^{(r)}$
stabilise to $\equiv_{\remm}$.
If $G$ is soluble then $\psi(G) \in \{d(G), d(G) +1\}$ (see \cite[Corollary 2.12]{ccl}). Furthermore, in general
$d(G) \leq \psi(G) \leq d(G) + 5$ (see \cite[Corollary 2.13]{ccl}),  however no example is known of a finite
group $G$ for which $\psi(G) > d(G) + 1$. 	For $r\geq \psi(G)$, we have that $(x)$ and $(y)$  have the same neighbours in the graph $\Gamma_{1,r-1}(G)$ if and only if $x\equiv_\remm y$. In particular from the knowledge of the family of graphs $\{\Gamma_{1,r-1}(G)\}_{r\in \mathbb N}$ we may  determine the precise value of $\psi(G).$

\

 Given a subset
 $X$ of 
 a finite group $G,$ we will denote by $d_X(G)$ the smallest cardinality
 of a set of elements of $G$ generating $G$ together with the elements 
 of $X.$ In \cite[Definition 2.15]{ccl} the following notion is also introduced: a finite group $G$ is efficiently generated if for all $x\in G$, 
$d_{\{x\}}(G)=d(G)$ implies that $x\in \frat(G).$ 
\begin{prop}Assume that the family $\lstt=\{\Gamma^*_{1,r-1}(G)\}_{r\in \mathbb N}$ is known. We may deduce whether $G$ is or not efficiently  generated.
\end{prop}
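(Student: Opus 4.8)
The plan is to translate efficient generation into a statement about the single graph $\Gamma^*_{1,d-1}(G)$, where $d=d(G)$, and then to extract the relevant count from $\lstt$. The key reformulation is as follows. For $x\in G$ one has $d_{\{x\}}(G)\le d-1$ precisely when $x$ lies in some generating $d$-tuple, i.e. when the vertex $(x)$ is non-isolated in $\Gamma_{1,d-1}(G)$; moreover $d_{\{x\}}(G)\le d$ always (complete a minimal generating set), and $d_{\{x\}}(G)=d$ is forced when $x\in\frat(G)$, since a generating set containing a non-generator stays generating after deleting it. Writing $V_1$ for the set of $1$-tuple vertices of $\Gamma^*_{1,d-1}(G)$, this shows $V_1\subseteq G\setminus\frat(G)$, and that $G$ is efficiently generated if and only if $V_1=G\setminus\frat(G)$, equivalently $|V_1|=|G|-|\frat(G)|$. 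By Theorem \ref{lstt} the quantities $|G|$, $d$ and $|\frat(G)|$ are all known from $\lstt$, and (arguing as in the proof of Theorem \ref{lstt}, where $\tau$ is recognized) the connected components that constitute $\Gamma^*_{1,d-1}(G)$ can be singled out inside $\lstt$. So everything reduces to computing $|V_1|$ from the (possibly disconnected) graph $\Gamma^*_{1,d-1}(G)$.

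When $d\le 2$ this is immediate. For $d=1$ (the cyclic case) the graph $\Gamma^*_{1,0}(G)$ is a star whose leaves are exactly the vertices of $V_1$; for $d=2$ the graph $\Gamma^*_{1,1}(G)$ is not bipartite by Lemma \ref{tri} and all of its vertices are $1$-tuples, so in both cases $|V_1|$ is read off directly. The substantive case is $d\ge 3$, where $\Gamma^*_{1,d-1}(G)$ is bipartite with parts $V_1$ and $V_{d-1}$ but need not be connected: we are at level $a+b=d(G)$, where Corollary \ref{c15} guarantees connectivity only for soluble $G$. Thus each component carries its own bipartition, and I must decide component by component which side is the $V_1$-side. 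I would separate the two sides by vertex degrees, which are intrinsic to the graph. If $(y)\in V_{d-1}$ then $H=\langle y\rangle$ is a proper subgroup (as $d-1<d(G)$), and the degree of $(y)$ equals the number of $x$ with $\langle H,x\rangle=G$; any valid $x$ avoids a fixed maximal $M\ge H$, so this degree is at most $|G|-|M|<|G|$.

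The heart of the matter is the complementary bound: every non-isolated $(x)\in V_1$ has degree at least $|G|$. Granting this, in $\Gamma^*_{1,d-1}(G)$ one has $V_1=\{v:\deg v\ge |G|\}$, since all $V_{d-1}$-degrees are strictly below $|G|$; hence the $V_1$-side of each component is the side containing a vertex of degree $\ge |G|$, the number $|V_1|$ is determined, and the criterion $|V_1|=|G|-|\frat(G)|$ can be checked. To prove the lower bound I would fix a completion $(x,y_1^0,\dots,y_{d-1}^0)$ of the non-isolated vertex $(x)$ and set $L=\langle x,y_1^0,\dots,y_{d-3}^0\rangle$. Since $L$ is generated by $d-2$ elements and $d(G)=d$, one checks that $d_L(G)=2$, and restricting the last two coordinates while freezing the prefix shows that $\deg(x)$ is at least the number $P_2(L)$ of pairs $(a,b)$ with $\langle L,a,b\rangle=G$. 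It therefore suffices to prove $P_2(L)\ge |G|$ whenever $d_L(G)=2$.

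The main obstacle is exactly this inequality $P_2(L)\ge |G|$. The natural tool is the observation that for a corank-$1$ subgroup $H$ the set $\{b:\langle H,b\rangle=G\}$ is a union of right cosets of $H$, so its cardinality is a positive multiple of $|H|$ (the same divisibility recorded earlier for degrees). Summing $\sum_a \#\{b:\langle\langle L,a\rangle,b\rangle=G\}$ over the elements $a$ that advance $L$ to a corank-$1$ subgroup, and using that these $a$ form a union of $L$-cosets, reduces the claim to the statement that a $2$-generated section has strictly more generating pairs than its order; in the normal case this is the identity $P_2(L)=\phi_{G/L}(2)\,|L|^2$ together with $\phi_{\overline G}(2)\ge|\overline G|$ for $\overline G=G/L$. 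Establishing this last inequality cleanly, and in particular controlling the non-normal situation and groups covered by many maximal subgroups, is the delicate point I expect to demand the most care.
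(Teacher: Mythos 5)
Your overall strategy coincides with the paper's: both reduce the question to the criterion ``$G$ is efficiently generated iff the number $\omega$ of isolated $1$-tuple vertices of $\Gamma_{1,d-1}(G)$ equals $|\frat(G)|$'' (equivalently $|V_1|=|G|-|\frat(G)|$), and both obtain $d(G)$, $|G|$, $|\frat(G)|$ and the components of $\Gamma^*_{1,d(G)-1}(G)$ from Theorem \ref{lstt}. Where you diverge is in how $|V_1|$ is extracted from those components. The paper's proof is three lines: it simply asserts that one can ``count how many of the vertices corresponding to $1$-tuples are isolated.'' You have correctly noticed that this is not automatic when $d\geq 3$: at level $a+b=d(G)$ the graph may be disconnected (Corollary \ref{c15} does not apply), each component carries its own bipartition, and one must decide per component which side is $V_1$. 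Surfacing this subtlety is a genuine contribution of your write-up; your upper bound $\deg(y)\leq |G|-|M|<|G|$ for $(y)\in V_{d-1}$ is correct, as is the reduction $\deg(x)\geq P_2(L)$ with $L=\langle x,y_1^0,\dots,y_{d-3}^0\rangle$ and $d_L(G)=2$.

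The gap is exactly where you place it: the inequality $P_2(L)\geq |G|$ whenever $d_L(G)=2$ (hence $\deg(x)\geq|G|$ for non-isolated $(x)\in V_1$, $d\geq 3$) is never proved, and your entire separation of the two sides of each component rests on it. The coset-counting you sketch gives only $P_2(L)\geq |\langle L,b_0\rangle|\cdot|L|$, which already meets $|G|$ with no room to spare in cases as simple as $G=C_p^3$, $L=\langle x\rangle$, and gives nothing useful when $[G:\langle L,b_0\rangle]$ is large compared with $|L|$; the reduction to ``a $2$-generated section has more generating pairs than its order'' is itself a nontrivial claim (via $|\aut(\bar G)|\mid\phi_{\bar G}(2)$ one only gets $\phi_{\bar G}(2)\geq|\bar G/Z(\bar G)|$), and the non-normal case is not addressed at all. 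So as written the argument is incomplete at its crux. If you want to salvage the degree-based identification you must either prove this inequality or replace it by a different intrinsic invariant distinguishing the two sides (for instance exploiting the divisibility $|\langle x_1,\dots,x_r\rangle|^{s}\mid\delta_{a,b}(x)$ from the M\"obius lemma, or the coset structure of neighbourhoods); alternatively, follow the paper and argue directly that the count $\omega$ is determined, but then you owe the justification the paper omits.
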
 
\begin{proof}First, by Theorem \ref{lstt}, we may determine $d(G)$, $|G|$ and $|\frat(G)|.$ Moreover, inside the family $\lstt$, we may identify the  connected components of \linebreak $\Gamma^*_{1,d(G)-1}(G)$ and consequently 
we may count how many of the vertices of \linebreak $\Gamma_{1,d(G)-1}(G)$ corresponding to 1-tuples are isolated. Let $\omega$ be the number of these vertices: $G$ is efficiently generated if and only $|\frat(G)|=\omega.$ 
\end{proof}

\begin{cor}\label{corpsi}Assume that  the family $\lstt$ is known. If $G$ is soluble, then we may determine $\psi(G).$
	\end{cor}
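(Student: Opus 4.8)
The plan is to use Theorem \ref{lstt} to reduce the problem to a choice between only two possibilities, and then to read off that choice from two explicitly located graphs in $\lstt$. Write $d=d(G)$. From $\lstt$ we recover $d$ and $|G|$ by Theorem \ref{lstt}, and since $G$ is soluble we know a priori that $\psi(G)\in\{d,d+1\}$ by \cite[Corollary 2.12]{ccl}. Recall that $x\equiv_\remm^{(r)}y$ holds exactly when $(x)$ and $(y)$ have the same neighbours in $\Gamma_{1,r-1}(G)$, and that the relations $\equiv_\remm^{(r)}$ become finer as $r$ grows, stabilising to $\equiv_\remm$ at $r=\psi(G)$. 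Hence $\equiv_\remm^{(d+1)}=\equiv_\remm$, and since $\equiv_\remm^{(d+1)}$ refines $\equiv_\remm^{(d)}$ as partitions of the set of $1$-tuples, we have $\psi(G)=d$ if and only if these two partitions have the same number of classes. So it suffices to compute, from $\lstt$, the numbers $N_d$ and $N_{d+1}$ of classes of $\equiv_\remm^{(d)}$ and $\equiv_\remm^{(d+1)}$.

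The two relevant graphs can be pinned down inside $\lstt$: following the argument in the proof of Theorem \ref{lstt} I would isolate the level-$d$ and level-$(d+1)$ members, and since $G$ is soluble Corollary \ref{c15} guarantees that both $\Gamma^*_{1,d-1}(G)$ and $\Gamma^*_{1,d}(G)$ are connected, so each occurs as a single connected component. To compute $N_{d+1}$, observe that at level $d+1$ every $1$-tuple $(x)$ is non-isolated, because $d_{\{x\}}(G)\le d$; thus in the connected bipartite graph $\Gamma^*_{1,d}(G)$ the part $V_1$ equals all of $G$ and, by Lemma \ref{vavb}, is the smaller of the two parts. Then $N_{d+1}$ is simply the number of distinct neighbourhoods occurring among these $|G|$ vertices, an intrinsic feature of the abstract graph.

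For $N_d$ I would work inside $\Gamma^*_{1,d-1}(G)$. The isolated $1$-tuples are exactly the $x$ with $d_{\{x\}}(G)=d$; they all have empty neighbourhood and so form a single $\equiv_\remm^{(d)}$-class, which is non-empty since $(1)$ is always isolated at level $d$ (as $d(G)=d$). The remaining classes are the non-isolated $1$-tuples grouped by neighbourhood, so $N_d$ equals $1$ plus the number of distinct neighbourhoods among the vertices of $V_1$ in $\Gamma^*_{1,d-1}(G)$. To read this off I must recognise which part of the graph is $V_1$. When $d=2$ this is immediate, since $\Gamma^*_{1,1}(G)$ is the reduced (non-bipartite) generating graph and every vertex is a $1$-tuple. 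When $d\ge 3$ the graph is connected bipartite with a unique bipartition, and I claim $V_1$ is the strictly smaller part: sending a non-isolated $(x)$ to a fixed $(d-1)$-tuple completing it to a generating $d$-tuple gives an injection $V_1\hookrightarrow V_{d-1}$ whose image meets each possible first entry at most once; it fails to be surjective because any non-isolated $(d-1)$-tuple $(u_1,\dots,u_{d-1})$ necessarily has $u_1\ne 1$ (otherwise $G$ would be $(d-1)$-generated), and then $(u_1,u_2u_1,u_3,\dots,u_{d-1})$ is a second, distinct, non-isolated $(d-1)$-tuple with the same first entry. This is the analogue at level $d(G)$ of Lemma \ref{vavb}, and establishing it rigorously is the main technical point of the argument.

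Once $V_1$ is recognised as the smaller part, $N_d$ is computed as above, and both $N_d$ and $N_{d+1}$ have been obtained purely from $\lstt$. I would then conclude that $\psi(G)=d$ precisely when $N_d=N_{d+1}$, and $\psi(G)=d+1$ otherwise, which determines $\psi(G)$. The one delicate step to watch is the strict inequality $|V_1|<|V_{d-1}|$ at level $d(G)$, since Lemma \ref{vavb} itself is only stated for levels strictly above $d(G)$; everything else reduces to class-counting in graphs already identified by Theorem \ref{lstt}.
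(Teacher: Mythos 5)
Your argument is correct, but it takes a genuinely different route from the paper's. The paper's proof is a two-line reduction: by \cite[Corollary 2.20]{ccl}, a soluble group has $\psi(G)=d(G)$ exactly when it is efficiently generated, and the preceding proposition shows that efficient generation is visible in $\lstt$ (by comparing the number of isolated $1$-tuples of $\Gamma_{1,d(G)-1}(G)$ with $|\frat(G)|$). You instead use only the weaker input $\psi(G)\in\{d,d+1\}$ from \cite[Corollary 2.12]{ccl} and decide between the two alternatives by counting equivalence classes: since $\equiv_\remm^{(d+1)}=\equiv_\remm$ refines $\equiv_\remm^{(d)}$, the two relations coincide iff the class numbers $N_d$ and $N_{d+1}$ agree, and these are read off as (one plus, respectively exactly) the number of distinct neighbourhoods among the $V_1$-vertices of $\Gamma^*_{1,d-1}(G)$ and of $\Gamma^*_{1,d}(G)$. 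What your approach buys is that it works directly from the definition of $\psi$ and bypasses efficient generation and the Frattini computation; what it costs is that you must locate $V_1$ inside the level-$d(G)$ graph, which forces you to prove the strict inequality $|V_1|<|V_{d-1}|$ at level $d(G)$, outside the range of Lemma \ref{vavb}. (The paper's proposition on efficient generation silently needs the same identification of the $1$-tuple part at level $d(G)$, so making this explicit is a genuine contribution.) Your proof of that inequality is sound once the map is stated precisely: as literally written, ``sending $(x)$ to a fixed completing $(d-1)$-tuple'' need not be injective, since two different elements may admit the same completion; the map you actually need, and the one consistent with your phrase about first entries, is $(x)\mapsto(x,y_2,\dots,y_{d-1})$ where $(y_1,\dots,y_{d-1})$ completes $x$ to a generating $d$-tuple, exactly mirroring the proof of Lemma \ref{vavb}. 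Non-surjectivity then follows as you say, because every non-isolated $(d-1)$-tuple $(u_1,\dots,u_{d-1})$ has $u_1\neq 1$ (else $d(G)\leq d-1$) and $(u_1,u_2u_1,u_3,\dots,u_{d-1})$ is a second non-isolated tuple with the same first entry; this requires $d\geq 3$, and you correctly dispose of $d=2$ separately, where all vertices of $\Gamma^*_{1,1}(G)$ are $1$-tuples.
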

\begin{proof}
Assume that $G$ is a finite soluble group. By \cite[Corollary 2.20]{ccl}, $\psi(G)  = d(G)$ if $G$ is efficiently generated, $\psi(G)=d(G)+1$ otherwise, so the conclusion follows immediately from the previous proposition.
\end{proof}

Generalizing a definition given in \cite{ccl} for 2-generator groups, we say that a finite $G$ has non-zero spread if $(g)$ is not isolated in the graph $\Gamma_{1,d(G)-1}(G)$ for every $g\neq 1.$ Moreover  we define an equivalence
relation $\equiv_\rg$ on the elements of $G$ by the rule $x\equiv_\rg y$ if $(x)$ and $(y)$
have the same set of neighbours in the graph $\Gamma_{1,d(G)-1}(G)$.
The following statements generalize \cite[Proposition 4.5]{ccl} and 
\cite[Theorem 4.6]{ccl} and can be easily proved.

\begin{prop}
	Let $G$ be a finite group. Then 
	the relations  $\equiv_\rg$ and $\equiv_\remm^{(d)}$ on
	$G$ coincide; hence
	$\equiv_\remm$ is a refinement of $\equiv_\rg$, and is equal to
	$\equiv_\rg$ if and only if $\psi(G) \leq d$. 
\end{prop}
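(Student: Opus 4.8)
The plan is to reduce the whole statement to the single observation recorded just above it, namely that $x\equiv_\remm^{(r)}y$ holds precisely when the vertices $(x)$ and $(y)$ have the same set of neighbours in $\Gamma_{1,r-1}(G)$. First I would specialise this to $r=d:=d(G)$. Then $\equiv_\remm^{(d)}$ is by definition the condition that $(x)$ and $(y)$ have the same neighbours in $\Gamma_{1,d-1}(G)$, which is exactly the defining condition of $\equiv_\rg$. Hence $\equiv_\rg$ and $\equiv_\remm^{(d)}$ literally coincide, with no computation required, and this settles the first assertion.

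Next I would recall that the relations $\equiv_\remm^{(r)}$ grow finer as $r$ increases. Concretely, if $x\equiv_\remm^{(r+1)}y$ and $\langle x,z_1,\dots,z_{r-1}\rangle=G$, then padding with the identity gives $\langle x,z_1,\dots,z_{r-1},1\rangle=G$, whence $\langle y,z_1,\dots,z_{r-1},1\rangle=G$ and so $\langle y,z_1,\dots,z_{r-1}\rangle=G$; thus $\equiv_\remm^{(r+1)}$ refines $\equiv_\remm^{(r)}$. Since $x\equiv_\remm y$ asserts that $x$ and $y$ may be interchanged in \emph{every} generating set, i.e.\ that $x\equiv_\remm^{(r)}y$ for all $r$, the relation $\equiv_\remm$ is the common refinement of the chain $\{\equiv_\remm^{(r)}\}_r$, and by definition $\psi(G)$ is the least value of $r$ at which $\equiv_\remm^{(r)}$ stabilises to $\equiv_\remm$. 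In particular $\equiv_\remm$ refines each $\equiv_\remm^{(r)}$, hence refines $\equiv_\remm^{(d)}=\equiv_\rg$; this is the \lq\lq hence\rq\rq\ clause.

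For the equivalence I would argue both directions from this chain. If $\psi(G)\leq d$, then since $d\geq \psi(G)$ the relation has already stabilised, so $\equiv_\remm^{(d)}=\equiv_\remm$ and consequently $\equiv_\rg=\equiv_\remm$. Conversely, if $\equiv_\rg=\equiv_\remm$, then $\equiv_\remm^{(d)}=\equiv_\remm$; using the inclusions $\equiv_\remm\,\subseteq\,\equiv_\remm^{(r)}\,\subseteq\,\equiv_\remm^{(d)}$, valid for every $r\geq d$ by the monotonicity above, every $\equiv_\remm^{(r)}$ with $r\geq d$ collapses to $\equiv_\remm$, so the stabilisation occurs no later than $d$, i.e.\ $\psi(G)\leq d$.

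There is no genuine obstacle here: the proposition is essentially a bookkeeping exercise translating the graph-theoretic language of $\Gamma_{1,r-1}(G)$ back into the substitution relations of \cite{ccl}. The only point demanding a little care is the monotonicity step, where one must verify via the identity-padding trick that the $\equiv_\remm^{(r)}$ really do refine one another, and that their stable value is $\equiv_\remm$ rather than some strictly coarser relation; once this is in place, the \emph{if and only if} statement falls out immediately from the minimality built into the definition of $\psi(G)$.
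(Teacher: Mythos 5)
Your proof is correct and is exactly the intended argument: the paper omits the proof (stating only that the proposition generalizes \cite[Proposition 4.5]{ccl} and ``can be easily proved''), and your unpacking — identifying $\equiv_\rg$ with $\equiv_\remm^{(d)}$ via the stated neighbourhood characterisation, then using the monotonicity of the chain $\equiv_\remm^{(r)}$ and the minimality in the definition of $\psi(G)$ — is the natural bookkeeping the authors had in mind. The care you take over the identity-padding step and over why the stable value of the chain is $\equiv_\remm$ itself is appropriate and complete.
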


\begin{thm}\label{thm:spread_phi}
	Let $G$ be a finite group with $d(G) = d$. 
	\begin{enumerate}
		\item[(1)] $G$ has non-zero spread
		if and only if $G$ is efficiently generated and has trivial
		Frattini subgroup. 
		\item[(2)] If $G$ is soluble and has non-zero spread, then $\psi(G) =
		d$.
	\end{enumerate}
\end{thm}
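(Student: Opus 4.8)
The plan is to reduce all three notions appearing in the statement --- non-zero spread, efficient generation, and triviality of $\frat(G)$ --- to a single numerical condition on the invariant $d_{\{g\}}(G)$, and then to read off the equivalences by pure unwinding of definitions. The first step I would carry out is the translation of the graph-theoretic condition: the vertex $(g)$ is joined in $\Gamma_{1,d-1}(G)$ to $(y_1,\dots,y_{d-1})$ precisely when $\langle g,y_1,\dots,y_{d-1}\rangle=G$, so $(g)$ is non-isolated if and only if $d_{\{g\}}(G)\leq d-1$. Hence $G$ has non-zero spread if and only if $d_{\{g\}}(G)\leq d-1$ for every $g\neq 1$. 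I would also record the elementary bound $d_{\{g\}}(G)\leq d$, valid for all $g$, since adjoining $g$ to a minimal generating set still generates $G$.

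For the forward implication in (1), assuming non-zero spread, I would first deduce $\frat(G)=1$. The key point is that any $g\in\frat(G)$ satisfies $d_{\{g\}}(G)=d$: the inequality $\leq d$ is automatic, while $\geq d$ holds because a Frattini element is a non-generator, so $\langle g,y_1,\dots,y_{k}\rangle=G$ forces $\langle y_1,\dots,y_k\rangle=G$ and hence $k\geq d$. A nontrivial Frattini element would then contradict the spread hypothesis, so $\frat(G)=\{1\}$. Efficient generation follows at once: if $d_{\{x\}}(G)=d$ then $x$ cannot be nontrivial (else the spread hypothesis gives $d_{\{x\}}(G)\leq d-1$), so $x=1\in\frat(G)$.

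For the converse implication in (1), assuming $G$ is efficiently generated with $\frat(G)=1$, I would take an arbitrary $g\neq 1$. Then $g\notin\frat(G)$, so the contrapositive of the efficiency condition gives $d_{\{g\}}(G)\neq d$, and combined with the universal bound $d_{\{g\}}(G)\leq d$ this yields $d_{\{g\}}(G)\leq d-1$, i.e. $(g)$ is non-isolated. As $g$ was arbitrary, $G$ has non-zero spread.

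Finally, part (2) is obtained by combining part (1) with the known characterization for soluble groups: by \cite[Corollary 2.20]{ccl}, a finite soluble group satisfies $\psi(G)=d(G)$ exactly when it is efficiently generated. Since non-zero spread implies efficient generation by (1), we conclude $\psi(G)=d$. I do not anticipate a genuine obstacle here; the whole argument is a chain of definition-chasing, and the only step demanding a little care is the Frattini non-generator identity $d_{\{g\}}(G)=d$ for $g\in\frat(G)$, on which both directions of (1) pivot.
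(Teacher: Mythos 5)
Your proof is correct; the paper itself omits the argument (stating only that the theorem generalizes \cite[Proposition 4.5]{ccl} and \cite[Theorem 4.6]{ccl} and ``can be easily proved''), and your reduction of all three conditions to the single inequality $d_{\{g\}}(G)\leq d-1$, pivoting on the fact that a Frattini element is a non-generator and hence satisfies $d_{\{g\}}(G)=d$, is exactly the intended definition-chasing. Your derivation of part (2) from part (1) together with \cite[Corollary 2.20]{ccl} also matches how the paper uses that corollary elsewhere in the same section.
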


Assume that $G$ is a finite group with non-zero spread and let $d=d(G).$
If $N$ is a non-trivial  normal subgroup of $G$, then 
$d(G/N)<d$ (otherwise we would have $d_{\{y\}}(G)=d$ for every $y \in
N$).  So $G$ has the following property:  
$$(\star) \quad \text {every proper quotient can be generated by $d-1$ elements, but $G$ cannot}.$$
When $d(G)=2$, groups with non-zero spread are also called $\frac 3 2$-generated.
In \cite{bgk}, Breuer, Guralnick and Kantor make the following remarkable conjecture: a finite group is  $\frac 3 2$-generated if and only if every proper quotient is cyclic.
In our terminology we could propose  the following more general  conjecture:
\begin{con}\label{cnzs}
A finite group $G$ has non-zero spread if and only if $G$ satisfies  the property $(\star).$
\end{con}

The groups with this property $(*)$ have been studied in \cite{dvl}. By \cite[Theorem 1.4 and Theorem 2.7]{dvl}, there exists a monolithic primitive group $L$ and a positive integer $t$ such that $G\cong L_t$  
and $d(L_{t-1})<d(L_t)$ (setting $L_0=L/\soc(L)).$
This motivates the following question:

\begin{question}\label{qtsp}Let $L$ be a finite monolithic primitive group 
and $t\in \mathbb N.$	Assume that $G\cong L_t$  
	and $d(L_{t-1})<d(L_t).$ Does $G$ have non-zero spread?
\end{question}

The remain part of this section will give an affirmative answer to the previous question.

\

First assume that $N=\soc L$ is nonabelian. If $t=1$ then by \cite[Theorem 1.1]{un} $d=d(G)=d(L)=\max(d(L/N),2)\leq \max(d-1,2)$, hence $d=2$  and Question  \ref{qtsp} has an affirmative answer by Theorem 1 in  \cite{bgh}. 
Suppose $t\neq 1$ (and consequently $d\neq 2$) and let $x=(l,l{n_2},\dots,ln_t),$ 
with $l\in L,$ $n_i\in N$, be a non-identity element of $G=L_t.$	Since $d(L_{t-1})<d,$ there exist $y_1=(l_1,l_1m_{1,2},\dots,l_1m_{1,t-1}),\dots,$ $y_{d-1}=(l_{d-1},l_{d-1}m_{d-1,2},\dots,l_{d-1}m_{d-1,t-1})$ such that
$L_{t-1}=\langle y_1,\dots,y_{d-1}\rangle.$ This is equivalent to saying that
the rows of the matrix
$$A:=\begin{pmatrix}l_1&l_2&\dots&l_{d-1}
\\l_1m_{1,2}&l_2m_{2,2}&\dots&l_{d-1}m_{d-1,2}\\\vdots&\vdots&\cdots&\vdots\\
l_1m_{1,t-1}&l_2m_{2,t-1}&\dots&l_{d-1}m_{d-1,t-1}
\end{pmatrix}
$$
are generating $(d-1)$-tuples of $L$ which belong to distinct orbits with respect to the conjugacy action of $C=C_{\aut L}(L/N)$. Since $x$ is a non-identity element of $G$, there exist $i\in \{2,\dots,t\}$ and $n$ in $N$ such that $l^n\neq ln_i$. Up to reordering, we may assume $i=t.$ Let $\tilde y_1\!=\!(l_1,l_1m_{1,2},\dots,l_1m_{1,t-1},l_1^n),\dots, \tilde y_{d-1}\!=\!(l_{d-1},l_{d-1}m_{d-1,2},\dots,l_{d-1}m_{d-1,t-1},l_{d-1}^n).$
We claim that $L_t=\langle \tilde y_1,\dots,\tilde y_{d-1}, x\rangle.$
This is equivalent to say that
the rows of the matrix
$$\tilde A:=\begin{pmatrix}l_1&l_2&\dots&l_{d-1}&l
\\l_1m_{1,2}&l_2m_{2,2}&\dots&l_{d-1}m_{d-1,2}&ln_2\\\vdots&\vdots&\cdots&\vdots&\vdots\\
l_1m_{1,t-1}&l_2m_{2,t-1}&\dots&l_{d-1}m_{d-1,t-1}&ln_{t-1}\\l_1^n&l_2^n&\dots&l_{d-1}^n&ln_t
\end{pmatrix}
$$
are generating $d$-tuples of $L$ which belong to distinct orbits with respect to the conjugacy action of $C=C_{\aut L}(L/N).$ The way in which $A$ has been constructed ensures that the first $t-1$ rows of $\tilde A$ satisfy the requested properties. We have only to prove that the last row cannot be $C$-conjugate to one of the first $t-1$ rows. Suppose $i\in \{2,\dots,t-1\}:$
since $(l_1,l_2,\dots,l_{d-1})$ and $(l_1m_{1,i},l_2m_{2,i},\dots,l_{d-1}m_{d-1,i})$ are not $C$-conjugate and $n\in C$ we deduce that also $(l_1^n,l_2^n,\dots,l_{d-1}^n,ln_t)$ and $(l_1m_{1,i},l_2m_{2,i},\dots,l_{d-1}m_{d-1,i},ln_i)$ are not $C$-conjugate.
Finally assume by contradiction that there exists $\gamma\in C$ with 
$(l_1^n,\dots,l_{d-1}^n,ln_t)=(l_1,\dots,l_{d-1},l)^\gamma.$ Since $\langle l_1,\dots,l_{d-1}\rangle=L,$ we have $C_C(l_1,\dots,l_{d-1})=1,$ hence $n=\gamma$ and consequently $ln_t=l^n,$ a contradiction. So we have proved that Question \ref{qtsp} has an affirmative answer when $\soc(L)$ is nonabelian.

\

Now assume that $N=\soc L$ is abelian.
We have  $L=N\rtimes H,$ where $H$ is an irreducible subgroup of $\aut(N)$ and $d(H)=d(L/N) \leq d-1.$
As usual, let $F=\End_HN,$ $q=|F|,$ $n=\dim_F(N)$, $m=\dim_F(\der(H,N)).$ 
Let $\delta_1,\dots,\delta_m$ be a basis of $\der(H,N)$ as an $F$-vector space. For each $h\in H$ consider the matrix $A_h\in M_{m\times n}(F)$ defined by setting $$A_h:=\begin{pmatrix}\delta_1(h)\\\vdots\\\delta_m(h)\end{pmatrix}.$$ The following is an immediate consequence of \cite[Proposition 5]{cheb}.
\begin{lemma}\label{che} Suppose that $H=\langle h_1, \dots, h_k\rangle$ and let $u$ be a positive integer.
	Let $w_i=(w_{i,1},\dots,w_{i,u})\in N^t$ with $1\leq i\leq k$ 
	and let $$B_i=\begin{pmatrix}w_{i,1}\\\vdots\\w_{i,u}
	\end{pmatrix}\in M_{t\times n}(F).$$
	The following are equivalent.
	\begin{enumerate}
		\item $L_t = N^t\rtimes H=\langle h_1w_1,\dots,h_kw_k\rangle$;
		\item $\ra\begin{pmatrix}A_{h_1}&\cdots&A_{h_k}\\B_1&\cdots&B_k
		\end{pmatrix}=m+t.$
	\end{enumerate}
	In particular $d(L_t)\leq k$ if and only if $m+t\leq kn.$
\end{lemma}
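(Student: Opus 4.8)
The plan is to derive both assertions from the cohomological generation criterion of \cite[Proposition 5]{cheb}, after rewriting it in matrix language. First I would recall the structure of $L_t$: since $N=\soc(L)$ is abelian and self-centralizing we have $L=N\rtimes H$ and $L_t\cong N^t\rtimes H$, with $H$ acting diagonally (in the same way) on the $t$ copies of $N$; in this model $h_iw_i$ is the product, inside the semidirect product, of $h_i\in H$ with $w_i=(w_{i,1},\dots,w_{i,t})\in N^t$. Because $H=\langle h_1,\dots,h_k\rangle$, the subgroup $M=\langle h_1w_1,\dots,h_kw_k\rangle$ surjects onto $H$, so $M=L_t$ if and only if $M\supseteq N^t$; equivalently the $FH$-submodule $M\cap N^t$ of the homogeneous module $N^t\cong N\otimes_F F^t$ must be all of $N^t$. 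This is exactly the situation treated in \cite[Proposition 5]{cheb}, so the genuine task is to recast its conclusion as the rank of the displayed $(m+t)\times kn$ matrix.

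For the translation I would argue as follows. Viewing $\der(H,N)=Z^1(H,N)$ as an $F$-space with basis $\delta_1,\dots,\delta_m$, the $j$-th row of the top block $(A_{h_1}\ \cdots\ A_{h_k})$ is the tuple $(\delta_j(h_1),\dots,\delta_j(h_k))\in N^k=F^{kn}$. Since a derivation is determined by its values on a generating set, the evaluation map $\der(H,N)\to N^k$, $\delta\mapsto(\delta(h_1),\dots,\delta(h_k))$, is injective, so these $m$ rows are linearly independent and the top block has rank exactly $m$. A linear dependence among the rows of the full matrix then amounts to a nonzero $c=(c_1,\dots,c_t)\in F^t$ together with scalars $d_1,\dots,d_m\in F$ satisfying $\sum_l c_l\,w_{i,l}=\sum_j d_j\,\delta_j(h_i)$ for every $i$; writing $\delta=\sum_j d_j\delta_j\in\der(H,N)$ and letting $\rho_c\colon N^t\to N$ be the (surjective) $FH$-map $(n_1,\dots,n_t)\mapsto\sum_l c_l n_l$, this reads $\rho_c(w_i)=\delta(h_i)$ for all $i$. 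The latter means precisely that the image of $M$ under the induced homomorphism $N^t\rtimes H\to N\rtimes H$ lies in the complement of $N$ determined by $\delta$, so this image misses $N$ and hence $M\not\supseteq N^t$. Conversely, if $M\neq L_t$ then $M\cap N^t=N\otimes_F W$ for some proper $W\le F^t$, and any $c$ annihilating $W$ produces such a dependence. Running the equivalence in both directions gives $M=L_t$ if and only if the matrix has full row rank $m+t$, which is (1)$\Leftrightarrow$(2).

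Finally, the ``in particular'' clause holds under the standing hypothesis that $H$ is $k$-generated. The bottom block $(B_1\ \cdots\ B_k)$ is completely free: as the $w_{i,l}$ range independently over $N=F^n$ its rows range over all of $F^{kn}$, so this block may be prescribed to be any $t\times kn$ matrix over $F$. Since the top block has rank $m$ for \emph{every} generating $k$-tuple $h_1,\dots,h_k$ of $H$, one can complete it to a matrix of full row rank $m+t$ by a suitable choice of the $t$ bottom rows precisely when $m+t\le kn$. As $L_t/N^t\cong H$ is $k$-generated, Gasch\"utz's theorem guarantees that $d(L_t)\le k$ if and only if some generating $k$-tuple of $H$ lifts to a generating $k$-tuple of $L_t$; by (1)$\Leftrightarrow$(2) this happens exactly when $m+t\le kn$.

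The step I expect to be the crux is the middle paragraph: matching a rank deficiency of the combined matrix with the module-theoretic failure $M\not\supseteq N^t$. The clean input is that the submodules of the homogeneous module $N^t$ are exactly the $N\otimes_F W$ with $W\le F^t$, and that the complements of $N$ in $N\rtimes H$ are parametrised by $\der(H,N)$; the functionals $\rho_c$ are what detect which submodule $M\cap N^t$ actually is. Verifying that \cite[Proposition 5]{cheb} is phrased so as to yield this correspondence directly—so that the present lemma is truly ``immediate''—is the only delicate bookkeeping; everything else is linear algebra over $F$.
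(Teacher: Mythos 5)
Your proof is correct. The paper offers no argument for this lemma at all --- it simply declares it an immediate consequence of the cited Proposition 5 of the Chebotarev-invariant paper --- and your derivation supplies exactly the content of that citation: reducing generation to $M\cap N^t=N^t$, using that the submodules of the homogeneous module $N^t$ are the $N\otimes_F W$ with $W\le F^t$, matching a row dependence (which must involve the bottom block, since the evaluation map $\der(H,N)\to N^k$ on a generating tuple is injective) with a complement of $N$ in $N\rtimes H$ parametrised by a derivation, and then observing for the final clause that the bottom block is freely prescribable while the top block always has rank $m$, so full row rank $m+t$ is attainable precisely when $m+t\le kn$.
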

In our case $d(G)=d(L_t)=d$ but $d(L_{t-1})\leq d-1$, since $L_{t-1}$ is a proper epimorphic image of $L_t:$ by the previous Lemma we must have $m+t-1=(d-1)n$ i.e., $$t=(d-1)n-m+1.$$ Now assume that $x:=h(v_1,\dots,v_t)$ is a non-identity element of $L_t$. Fix $h_1,\dots,h_{d-1}$ such that $H=\langle h_1,\dots,h_{d-1}\rangle.$ There exist $\tilde w_i\in N^{t-1},$ for $1\leq i \leq d-1,$ such that $L_{t-1}=\langle h_1\tilde w_1,\dots,h_{d-1}\tilde w_{d-1}\rangle,$ in other words \begin{equation}\label{invert}\det\begin{pmatrix}
A_{h_1}&\dots&A_{h_{d-1}}\\\tilde B_1&\dots&\tilde B_{d-1}\end{pmatrix}\neq 0.\end{equation} We claim that there exist $u_1,\dots,u_{d-1}\in N$ such that
\begin{equation}\label{genera}L_t=\langle h(v_1,\dots,v_t), h_1(\tilde w_1, u_1),\dots,h_{d-1}(\tilde w_{d-1}, u_{d-1})\rangle.\end{equation}
Set $$\tilde B=\begin{pmatrix}v_1\\\vdots\\v_{t-1}\end{pmatrix}.$$
By Lemma \ref{che}, (\ref{genera}) is equivalent to
\begin{equation}\ra\begin{pmatrix}A_h&A_{h_1}&\cdots&A_{h_{d-1}}\\\tilde B&\tilde B_1&\cdots&\tilde B_{d-1}\\v_t&u_1&\cdots&u_{d-1}\end{pmatrix}=(d-1)n+1=m+t.
\end{equation}
Since $x\neq 1,$ we have $$X:=\begin{pmatrix}A_h\\\tilde B\\v_t\end{pmatrix}\neq 0.$$ In particular at least one column of $X$ is a non-zero element of $M_{m+t,1}(F)$.  Let us write such a  column in the form 
$$Y=\begin{pmatrix}C\\\gamma\end{pmatrix}$$ with $C\in M_{m+t-1,1}(F)$ and $\gamma\in F.$ Let $$Z:=\begin{pmatrix}
A_{h_1}&\dots&A_{h_{d-1}}\\\tilde B_1&\dots&\tilde B_{d-1}\end{pmatrix}.$$
By (\ref{invert}), $C$ is a linear combination of the columns of $Z.$ If $\gamma\neq 0,$ then
$$\det \begin{pmatrix}C&Z\\\gamma&0\end{pmatrix}\neq 0,$$ so we are done if we choose $u_1=\dots=u_{d-1}=0.$ If $\gamma= 0,$ then $C$ is a non-zero matrix, so, denoting by $Z_i$ the $i$-th column of $Z,$ there exists
$(0,\dots,0)\neq (\lambda_1,\dots,\lambda_{(d-1)n})\in F^{(d-1)n}$ such that
$\sum_i\lambda_iZ_i=C.$ Choose $(\alpha_1,\dots,\alpha_{(d-1)n})\in F^{(d-1)n}$ such that $\sum_i\lambda_i\alpha_i\neq 0.$ If we choose $$u_1=(\alpha_1,\dots,\alpha_n), u_2=(\alpha_{n+1},\dots,\alpha_{2n}), \dots,
u_{d-1}=(\alpha_{(d-2)n+1},\dots,\alpha_{(d-1)n}),$$ then
$$\det\begin{pmatrix}C&Z\\0&\begin{matrix} u_1&\dots&u_{d-1}\end{matrix}
\end{pmatrix}\neq 0$$ 
Summarizing we proved:
\begin{prop}\label{ultimo}The answer to Question \ref{qtsp} is affirmative.  As a consequence Conjecture \ref{cnzs} is true.
\end{prop}

\end{document}